\renewcommand{\epsilon}{\varepsilon}
\newcommand{\cA}{{\ensuremath{\mathcal A}} }
\newcommand{\cO}{{\ensuremath{\mathcal O}} }
\newcommand{\cF}{{\ensuremath{\mathcal F}} }
\newcommand{\cC}{{\ensuremath{\mathcal C}} }
\newcommand{\cM}{{\ensuremath{\mathcal M}} }
\newcommand{\cZ}{{\ensuremath{\mathcal Z}} }
\newcommand{\cS}{{\ensuremath{\mathcal S}} }
\newcommand{\bi}{{\ensuremath{\mathbf i}} }
\newcommand{\bP}{{\ensuremath{\mathbf P}} }
\newcommand{\bE}{{\ensuremath{\mathbf E}} }
\newcommand{\bbC}{{\ensuremath{\mathbb C}} }
\newcommand{\bbE}{{\ensuremath{\mathbb E}} }
\newcommand{\bbP}{{\ensuremath{\mathbb P}} }
\newcommand{\bbR}{{\ensuremath{\mathbb R}} }
\newcommand{\gep}{\varepsilon}       % \geq already exists...
\newcommand{\gO}{\Omega}
\numberwithin{equation}{section}
\newtheorem{theorem}{Theorem}[section]
\newtheorem{lemma}[theorem]{Lemma}
\newtheorem{proposition}[theorem]{Proposition}
\newtheorem{corollary}[theorem]{Corollary}
\newtheorem{rem}[theorem]{Remark}
\newtheorem{remark}[theorem]{Remark}
\newtheorem{assumption}[theorem]{Assumption}
\theoremstyle{definition}
\renewcommand{\tilde}{\widetilde}          % wider `tilde'
\DeclareMathSymbol{\leqslant}{\mathalpha}{AMSa}{"36} % nicer `smaller or equal'
\DeclareMathSymbol{\geqslant}{\mathalpha}{AMSa}{"3E} % nicer `larger or equal'
\DeclareMathSymbol{\eset}{\mathalpha}{AMSb}{"3F}     % nicer `emptyset'
\renewcommand{\leq}{\;\leqslant\;}                   % redef. of < or =
\renewcommand{\geq}{\;\geqslant\;}                   % redef. of > or =
\newcommand{\dd}{\text{\rm d}}             % a straight d for differentials
\newcommand{\maxtwo}[2]{\max_{\substack{#1 \\ #2}}} % max with 2 lines
\newcommand{\mintwo}[2]{\min_{\substack{#1 \\ #2}}} % min with 2 lines
\newcommand{\R}{\mathbb{R}}
\newcommand{\N}{\mathbb{N}}
\newcommand{\E}{\mathds{E}}
\renewcommand{\P}{\mathds{P}}
\newcommand{\ind}{\mathds{1}}
\newcommand{\bx}{\mathbf{x}}
\DeclareDocumentCommand \Pmp { m m o} {
\IfNoValueTF{#3}
{P_{#1}^{#2}}
{P_{#1}^{#2}\left(#3\right)}
}
\DeclareDocumentCommand \Emp { m m o} {
\IfNoValueTF{#3}
{E_{#1}^{#2}}
{E_{#1}^{#2}\left[#3\right]}
}
\DeclareDocumentCommand \Pbr { m m m m o } {
\IfNoValueTF{#5}
{P_{#1}^{#2\stackrel{#4}{\rightarrow} #3}}
{P_{#1}^{#2\stackrel{#4}{\rightarrow} #3}\left(#5\right)}
}
\DeclareDocumentCommand \Ebr { m m m m o } {
\IfNoValueTF{#5}
{E_{#1}^{#2\stackrel{#4}{\rightarrow} #3}}
{E_{#1}^{#2\stackrel{#4}{\rightarrow} #3}\left[#5\right]}
}
\newcommand{\Cov}{\mathrm{Cov}}
\newcommand{\Var}{\mathrm{Var}}
\def\bnum{\begin{enumerate}}
\def\enum{\end{enumerate}}
\def\<#1{\langle #1 \rangle}
\def\cF{\mathcal{F}}
\newcommand{\gb}{\beta}
\newcommand{\gd}{\delta}
\newcommand{\go}{\omega}
\newcommand{\gl}{\lambda}
\newcommand{\esssup}{\mathrm{ess\,sup}}
\renewcommand{\hat}{\widehat}
\newcommand{\lint}{\llbracket}
\newcommand{\rint}{\rrbracket}
\begin{document}

\title[Ultraviolet renormalization of the boundary Sine-Gordon model]{A probabilistic approach of ultraviolet renormalization in the boundary Sine-Gordon model}

\author[Hubert Lacoin]{Hubert Lacoin }
\address{IMPA, Estrada Dona Castorina 110, Rio de Janeiro, RJ-22460-320, Brasil. Supported by FAPERj (grant JCNE) and CPNq (grant Universal and productivity grant)}
\email{ }

\author[R\'emi Rhodes]{R\'emi Rhodes}
\address{Aix-Marseille university, Institut de math\'ematiques I2M, Technop\^ole Ch\^ateau-Gombert, 39 rue F. Joliot Curie, 13453 Marseille Cedex 13, France. Partially supported by grant    ANR-15-CE40-0013 Liouville.}
%\email{remi.rhodes@univ-amu.fr }

\author[Vincent Vargas]{ Vincent Vargas}
\address{ENS Ulm, DMA, 45 rue d'Ulm,  75005 Paris, France. Partially supported by grant    ANR-15-CE40-0013 Liouville.}
%\email{Vincent.Vargas@ens.fr}

%\footnotetext[1]{}

%%%%%%%%%%%%%%%%%%%%%%%%%%%%%%%%%%%%%%%%%%%%%%%%%%%%%%%%%%%%%%%%%%%%%%%%%%%%%%
%%%%%%%%%%%%%%%%%%%%% Here the document begins %%%%%%%%%%%%%%%%%%%%%%%%%%%%%%%
%%%%%%%%%%%%%%%%%%%%%%%%%%%%%%%%%%%%%%%%%%%%%%%%%%%%%%%%%%%%%%%%%%%%%%%%%%%%%%

\keywords{Boundary Sine-Gordon, renormalization, Onsager inequality, charge correlation functions}

 \begin{abstract}
The  Sine-Gordon model  is obtained by tilting the law of  a log-correlated Gaussian field $X$ defined on a subset of $\bbR^d$ by the exponential of its cosine, namely $\exp(\alpha \smallint  \cos (\beta X))$.  It has gathered significant attention due to its importance in Quantum Field Theory and to its connection with  the study of   log-gases in Statistical Mechanics. In spite of its relatively simple definition, the model has a very rich phenomenology. While the  integral $\smallint  \cos (\beta X)$ can be defined properly when $\beta^2<d$   using the standard Wick normalization of $\cos (\beta X)$, a more involved renormalization procedure is needed when $\beta^2\in [d,2d)$. In particular it exhibits a countable sequence of phase transitions accumulating to the left of $\beta=\sqrt{2d}$,  each transition  corresponding to the addition of an extra term in the renormalization scheme. The final threshold $\beta=\sqrt{2d}$ corresponds to the Kosterlitz-Thouless (KT) phase transition of the $\log$-gas. In this paper, we present a novel probabilistic approach to renormalization of the two-dimensional boundary (or 1-dimensional) Sine-Gordon model up to the KT threshold $\beta=\sqrt{2d}$. The purpose of this approach is to propose a simple and flexible method  to treat this  problem which, unlike the existing renormalization group techniques, does not  rely on translation invariance for the covariance kernel of $X$ or the reference measure along which  $\cos (\beta X)$ is integrated. To this purpose we establish by induction a general formula for the cumulants of a random variable  defined on a filtered probability space expressed in terms of brackets of a family of martingales; to the best of our knowledge, the recursion formula is new and might have other applications.  We apply this  formula to study the cumulants of (approximations of) $\smallint  \cos (\beta X)$. To control all terms produced by the induction procedure, we prove a refinement of classical electrostatic inequalities, which allows us to bound the energy of configurations in terms of the Wasserstein distance between $+$ and $-$ charges.

\end{abstract}

\maketitle

 %%%%%%%%%%%%%%%%%%%%%%%%%%%%%%%%%%%%%%%%%%%%%%%%%%%%%%%%%%%%%%%%%%%%%%%%%%%%%%
\section{Introduction}
%%%%%%%%%%%%%%%%%%%%%%%%%%%%%%%%%%%%%%%%%%%%%%%%%%%%%%%%%%%%%%%%%%%%%%%%%%%%%

%\red{Alors je propose pas de modification direct mais il me semble que formellement dans \eqref{2DCG} on doit avoir a priori la somme sur tout les $n$ et la somme sur toute les charges de $\{-1,1\}^n$ et avec $(\alpha)^n/n!$ en préfacteur.
%Ensuite ça revient au même parce au lieu de $\ln_\epsilon |z_i-z_j|$ c'est une approximation de la vrai fonction de Green dans $\bbR^2$ qu'il faut mettre, et elle vaut $\ln (z_i-z_j)+\infty$. Ce terme $\infty$ en plus fait que tout les termes pour lesquels on a pas compensation de charge sont nuls à la limite.}
\subsection{Log-Gases, $2D$-Yukawa Gas and $2D$-Coulomb Gas}
The $d$-dimensional \textit{log-gas} is a model of statistical mechanics   describing a gas of interacting charged particles, the interaction being given by a potential $K(x,y)$, where $K$ is continuous outside of the diagonal   and $K(x,y)\stackrel{|x-y|\to 0}{=} - \ln |x-y|+O(1)$.

\medskip

Given two positive parameters $\alpha, \beta>0$, $\alpha$ standing for  particle activity and $\beta^2$ for inverse temperature (the reason for such an exotic parametrization appears below),
its formal partition function    over a bounded open set $\mathcal{O}\subset\R^d$ is given by
\begin{equation}\label{2DCGapprox}
\mathcal{Z}^K_{\alpha,\beta}(\mathcal{O})=\sum_{n= 0}^{\infty} \frac{\alpha^n}{n!}\sum_{(\gl_i)^n_{i=1}\in \{-1,1\}^n}
\int_{\mathcal{O}^{n}}\exp\Big(- \beta^2 \sum_{1\le i<j\le n}\lambda_i\lambda_jK(x_i,x_j) \Big) \prod_{i=1}^{n}\dd x_i.
\end{equation}
The above integral diverges when $\gb^2\ge d$. In this case, to have an interpretation of the above expression, we need to consider a reasonable sequence of bounded approximations $K_{\gep}$ of the potential $K$  and consider the limit
\begin{equation}
 \mathcal{Z}^K_{\alpha,\beta}(\mathcal{O})=\lim_{\gep \to 0}\mathcal{Z}^K_{\alpha,\beta,\gep}(\mathcal{O})
\end{equation}
where 
\begin{equation}\label{2DCGapproxics}
\mathcal{Z}^K_{\alpha,\beta,\gep}(\mathcal{O}):=\sum_{n= 0}^{\infty} \frac{\alpha^n}{n!}\sum_{(\gl_i)^n_{i=1}\in \{-1,1\}^n}
\int_{\mathcal{O}^{n}}\exp\Big( -\beta^2 \sum_{1\le i<j\le n}\lambda_i\lambda_j K_{\gep} (x_i,x_j)  \Big) \prod_{i=1}^{n}\dd x_i.
\end{equation}
While, of course, strictly speaking, $\mathcal{Z}^K_{\alpha,\beta,\gep}$ also diverges when $\gep$ tends to $0$ if $\gb^2\ge  d$,
 a non-trivial interpretation of the limit is obtained by dividing by an appropriate (and also diverging) counter-term. This renormalization scheme is the first step needed to define the Gibbs measure associated with the partition function $\mathcal{Z}^K_{\alpha,\beta}(\mathcal{O})$ and study its properties.

\medskip

A prototypical example of log-gas is the $2D$-Yukawa gas ($d=2$) with mass $m>0$, corresponding to the potential 
$$K_m(x,y)=\int^{\infty}_0 \frac{1}{2t}e^{-\frac{|x-y|^2}{2t}-mt}\dd t.$$
Another example of central interest is
the two-dimensional (free boundary) Coulomb gas, corresponding to taking the limit
of the Yukawa potential when $m$ goes to zero.
This formally corresponds to taking  $K(x,y)= - \ln |x-y| +\infty$, and has the effect of giving an infinite energy to any configuration with non-zero global charge. This corresponds to 
\begin{equation}\label{2DCG}
\mathcal{Z}^{\mathrm{Coulomb}}_{\alpha,\beta}(\mathcal{O})=\sum_{n= 0}^{\infty}\frac{\alpha^{2n}}{(n!)^2}\int_{\mathcal{O}^{2n}}\exp\Big( \beta^2 \sum_{i<j}\lambda^{(n)}_i\lambda^{(n)}_j\ln |x_i-x_j| \Big)\prod_{i=1}^{2n}\dd x_i,
\end{equation}
where $\gl^{(n)}_i= -1$ if $1\le i\le n$ and $\gl^{(n)}_i=1$ if $n+1\le i\le 2n$.
Again $\ln$ has to be replaced e.g. by $\ln_\gep r:= \frac{1}{2} \ln(r^2+ \gep^2)$ in order to take the limit when $\gep\to 0$ and thus to give an appropriate renormalization scheme.
One could  as well consider for $K$ other types of Green functions with various boundary conditions, leading to  Coulomb gases with various boundary conditions.

\subsection{The Sine-Gordon representation}

A well known tool to identify the proper renormalization for log-gases (including Coulomb and Yukawa) is their Sine-Gordon representation, which we briefly sketch now. 
To define it we need to assume that $K$ is positive semi-definite in the sense
that for any bounded continuous function on $\mathcal{O}$ we have 
\begin{equation}\label{prosit}
\int_{\mathcal{O}^2} K(x,y) g(x)g(y) \dd x \dd y\ge 0.
\end{equation}
It is standard to check that the assumption is satisfied for the Yukawa potential. Concerning the Sine-Gordon representation of the (free boundary) Coulomb gas let us mention only that $K_R(x,y)=-\ln(|x-y|/R)$  is positive semidefinite on $\mathcal{O}^2$ if  $\mathcal{O}\subset B(0,R)$.
Consider a centered Gaussian Field $X$ defined on $\cO$,  with law $\P$ and expectation $\E$, with  covariance function
$$\E[X(x)X(y)]=K(x,y),\quad \text{ for }x,y\in \mathcal{O}.$$
The condition \eqref{prosit} guarantees that such a field $X$ can be defined as a random distribution on $\cO$ \ : \  the pointwise value $X(x)$ does not make sense but $\int_{\cO} X(x) g(x) \dd x$ does for all sufficiently regular functions $g$.
The Sine-Gordon model is formally defined as the measure 
\begin{equation}\label{defSGintro}
\exp\Big(2\alpha   \int_{\mathcal{O}}\cos(\beta X(x)) \,\dd x  \Big) \P(\dd X).
\end{equation}
Since $X$ is not a function but a random distribution, the mathematical interpretation of \eqref{defSGintro} requires further explanations. A proper definition requires renormalization  :  more specifically,  the field $X$ can be smoothened by convolution with a reasonable (e.g. compactly supported and $C^{\infty}$) isotropic kernel (in the paper we use another type of approximation of $X$ which is not a convolution but this is not relevant for this part of the discussion). We call $X_\epsilon$ the regularized field and $K_{\gep}(x,y)$ the corresponding covariance (we just write $K_{\gep}(x)$ for the variance). The Sine-Gordon model is defined as the limit as $\epsilon$ goes to $0$   of the following well-defined sequence of measures 
\begin{equation}\label{defSGintroepsilon}
\lim_{\epsilon \to 0}  \, \exp\Big(2\alpha   \int_{\mathcal{O}}\cos(\beta X_\epsilon (x))  e^{\frac{\beta^2}{2}K_{\gep}(x)} \,\dd x \Big)  \P(\dd X).
\end{equation}
This measure is directly related to the partition  function of the $d$-dimensional  log-gas. Indeed, complex exponential moments of the Gaussian field are related to the partition function of the $d$-dimensional  log gas with fixed number of particles through the following relation  :   for any charge distribution $(\gl_i)_{i=1}^n$ with $k$ positive charges and $n-k$ negative ones, we have the following correspondence 
\begin{equation}\label{momCG}
\E\Big[\Big(\int_{\mathcal{O}}\!\!e^{\mathbf{i}\beta X_\epsilon(x)+\frac{\beta^2}{2}K_{\gep}(x)}\,\dd x\Big)^{k}\Big(\int_{\mathcal{O}}\!\!e^{-\mathbf{i}\beta X_\epsilon(x)+\frac{\beta^2}{2}K_{\gep}(x)}\,\dd x\Big)^{n-k}\Big]\\= \int_{\mathcal{O}^{n}} \!\!\!e^{- \beta^2 \sum_{1\le i<j\le n}\lambda_i\lambda_j K_\epsilon(x_i,x_j) }\prod_{i=1}^{n}\dd x_i.
\end{equation}
As a consequence of \eqref{momCG} we have
\begin{multline}\label{asdf}
\bbE\left[\exp\big(2\alpha  \int_{\mathcal{O}}\cos(\beta X_\epsilon (x)) e^{\frac{\beta^2}{2}K_{\gep}(x)}\,\dd x\big) \right]=\bbE\left[e^{\alpha     \int_{\mathcal{O}}(e^{ \bi \beta  X_\epsilon(x)+\frac{\beta^2}{2}K_{\gep}(x)}+ e^{- \bi\beta  X_\epsilon(x)+\frac{\beta^2}{2}K_{\gep}(x)})\,\dd x} \right]
\\ = \sum_{n=0}^{\infty}\frac{\alpha^n}{n!}\sum_{k=0}^n\binom{n}{k}\E\Big[\Big(\int_{\mathcal{O}}e^{\mathbf{i}\beta X_\epsilon(x)}\,\dd x\Big)^{k}\Big(\int_{\mathcal{O}}e^{-\mathbf{i}\beta X_\epsilon(x)}\,\dd x\Big)^{n-k}\Big]=\mathcal{Z}^K_{\alpha,\beta,\gep}(\mathcal{O}).
\end{multline}
%I don't understand neither the meaning nor the relevance of this sentence and the paragraph above. It makes us think that we fix arbitrarily the renormalization factor whereas everything is R independent and the formula above is just Plancherel, no need to take a limit. I want to reformulate as follows:\\
%For the representation of the (free boundary) Coulomb gas, we fix arbitrarily $R$ such that $\mathcal{O}\subset B(0,R)$  and we write $K_{R,\gep}$  for the smoothened version of $K_R(x,y)=-\ln (|x-y|/R)$ and $X_{R,\gep}$ for the associated Gaussian field.  A direct application of Plancherel's formula to the trigonometric  series (note that the sum is a.s. uniformly converging)
%$$T(\theta)=\sum_n\frac{\alpha^n}{n!}e^{in\theta} \int_{\mathcal{O}}\!\!e^{\mathbf{i}\beta X_{R,\epsilon}(x)+\frac{\beta^2}{2}K_{R,\gep}(x)}\,\dd x$$
%yields that\footnote{One can easily check that \eqref{2DCG} does not depend on the choice of $R$, as well as the limit $\epsilon\to 0$ of the right-hand side of \eqref{remi}.}
%\begin{equation}\label{remi}
% \mathcal{Z}^{\mathrm{Coulomb}}_{\alpha,\beta,\gep}(\mathcal{O})
%= \frac{1}{2\pi}\int_0^{2\pi}\E\Big[\exp\big( 2\alpha   \int_{\mathcal{O}}e^{\frac{\beta^2}{2}K_{R,\gep}(x)}\cos(\theta+\beta X_{R,\epsilon}(x))\,\dd x\big)\Big]\,\dd\theta.
% \end{equation}
% 

 The Coulomb/Yukawa interaction gives rise to a log-gas only in dimension $2$ because the Green function is of log-type only  in dimension 2. However,   the phenomenology governing the behavior of $\log$-gases is independent of  the dimension. Beyond its application to physically relevant $2D$-Coulomb/Yukawa cases,
the problem of giving an interpretation to the limit \eqref{2DCGapprox} using the Sine-Gordon representation \eqref{defSGintroepsilon} is of interest from a mathematical perspective because it provides a testbed for renormalization techniques, known as ultraviolet renormalization of the Sine-Gordon model (as opposed to infrared renormalization which is concerned with the large volume behaviour of the model).
 
\subsection{Renormalization and the multipole picture} \label{sub:multi}

 As soon as $\beta>0 $, the definition of the limit \eqref{defSGintroepsilon} is  nontrivial  since $X_{\gep}$ diverges pointwise.
 When $\beta^2<d$, however, for a large class of approximation schemes  the limit 
 $$\lim_{\gep \to 0}\exp\left(2\alpha  \int_{\mathcal{O}}e^{\frac{\beta^2}{2}K_{\gep}(x)}\cos(\beta X_\epsilon (x)) \,\dd x\right)$$ 
 exists and is integrable, and does not depend on the scheme we use for the approximation.
 This makes the limiting measure  \eqref{defSGintroepsilon}  absolutely continuous with respect to $\P$, as was shown in  \cite{froh76}. As can be seen directly, the limit \eqref{2DCGapprox} is positive (and finite), and induces a probability distribution   on 
 the set of charged particles on $\cO$ which only gives mass  to configurations with finitely many particles.

 When $\beta$ passes the threshold  $\sqrt{d}$, the limit \eqref{2DCGapprox} becomes infinite and $\int_{\mathcal{O}} e^{\frac{\beta^2}{2}K_{\gep}(x)}\cos(\beta X_\epsilon (x)) \, \dd x$ does not converge as a random variable. 
 When $\beta\in [\sqrt{d},\sqrt{2d})$ the divergence can be tamed by subtracting a number of field independent counter-terms in the exponential \eqref{2DCGapprox}. This corresponds to multiplying the sequence by $\exp( -\sum_{i=1}^k  p_i(\gep)\alpha^i)$, for a polynomial in $\alpha$  whose coefficients $p_i(\gep)$  diverge as $\epsilon\to 0$.
 The  number of necessary counter-terms and their values  are obtained by 
 an asymptotic analysis of the cumulants of the random variable 
\begin{equation}\label{IbouBa}
 \int_{\mathcal{O}}e^{\frac{\beta^2}{2}K_{\gep}(x)}\cos(\beta X_\epsilon (x)) \,\dd x.
 \end{equation}
 When $\beta^2<2d$ only a finite number of cumulants diverge. 
This number  increases with $\beta$ and tends to infinity when $\beta$ approaches $\sqrt{2d}$. Furthermore the analysis of  cumulants   allows us to define a  nontrivial  limit for the distribution \eqref{defSGintroepsilon} which is (conjecturally)  singular with respect to $\P$.

\medskip

When $\beta^2\ge 2d$,  infinitely many  cumulants diverge. This   makes ultraviolet renormalization impossible and there is thus no possible interpretation of \eqref{2DCGapprox} and \eqref{defSGintroepsilon} beyond  this value. The origin of these divergencies can be understood in a simple way via the multipole picture  originally described in  \cite{froh76}. Let us expose it in the framework of the $2D$-Coulomb gas\footnote{in which case the dimension is $d=2$ and KT transition occurs at $\beta^2=2d=4$.} (but the phenomenology applies in every dimension and for other potentials)

\begin{itemize}
 \item When $\gb^2<2$, the limit \eqref{2DCG} exists and corresponds to a gas of a finite number of  isolated particles.
 \item When $\beta^2\in [2,4)$, the partition function is dominated by the contribution of dipoles consisting of two nearby particles (within distance of order $\gep$ where $\gep$ is the scale at which $K$ is smoothened) with opposite charges. Each dipole  contributes to a negative energy $- |\log \gep|$, so that a configuration of $2n$ particles forming $n$ dipoles correspond to
 an energy  $- n|\log \gep|$.
Such configurations occupy a volume of the state space $\mathcal{O}^{2n}$ which is of order $\epsilon^{2n}$ (if the position of positive charges are chosen freely, the negative charges have to be located in   balls of volume $\gep^2$ around the locations of positive ones). The total contribution of isolated dipoles to the partition function \eqref{2DCG} restricted to configurations of size $2n$ is thus of order $\epsilon^{(2-\beta^2)n}$, which diverges for $\beta^2>2$ (when $\beta^2=2$ a logarithmic divergence is obtained by summing over intermediate scales between $\gep$ and the macroscopic one). The gas is therefore dominated by configurations made up of
a large number (of order $\epsilon^{(2-\beta^2)n}$) of such dipoles.
While the total number of charged particle diverges in the limit when $\gep$ goes to zero, local cancelations of charges on short distances allows us to define a limit of the charge distribution.

\item When $\beta^2\in [3,4)$, the contribution of quadrupoles (that is a combination of two closely located dipoles in a configuration that makes the resulting energy smaller than the energy of the sum of two independent dipoles), while not dominant, comes to diverge also : the total weight of configurations formed of $n$-quadrupoles being of order $\epsilon^{(6-2\beta^2)n}$ (when $n$ is even). For this reason a second diverging counter-term is required in the renormalization.
\item When $\beta^2\in[10/3,4)$ the contribution of sextupoles  has to be taken into account, and in general $2p$-poles starts having a diverging contribution to the partition function when $\gb^2\in[2(2-p^{-1}),4)$.
\item When $\beta^2\ge 4$, the contribution of $2p$-poles diverges for all $p$. Furthermore, the relative weight of $2p$-poles (considering energy and entropy) which is $\epsilon^{(2(2p-1)-p\beta^2)n}$, becomes increasing in $p$, indicating a total collapse of the system.
\end{itemize}
This scenario is rather well understood  \cite{BGN82,BK,DH,Nic,NRS}, combining log gas/Sine Gordon approaches. Yet all the methods presented so far suffer from restrictions. The main reason for this is that they rely on renormalization group (RG) techniques which,  though powerful,   are especially adapted to a translational invariant context, for which the RG map is most easily studied.  As a consequence, the papers mentioned above have considered  only the   free boundary version of the Coulomb gas or  Yukawa gas (in two dimensions) as other boundary conditions for the Coulomb gas are usually not translationally invariant. Furthermore, the existence of the correlation function of fractional charge densities could not be established using renormalization techniques,  as it requires analyzing the model under a local change of background geometry (see Subsection \eqref{sec:frac} for a more precise statement) which strongly breaks translational invariance. Let us mention that correlation functions are studied in  \cite{BK},  with a restriction on the  range of allowed parameters $\beta$, with a method that relies on Hamilton-Jacobi equations. Besides these restrictions, the methods above  either bring a limitation in the range of allowed $\beta$, or require $\alpha$ to be small, or only provide bounds for the partition function \eqref{2DCGapproxics} without establishing the existence of the limit.  Therefore, it is certainly fair to say that the global understanding of the model is far from complete from the mathematical angle. 

Finally, let us mention some further related results in dimension $d=2$. A  large deviation results  for a space discretization of \eqref{2DCG} in the case $\beta^2<d$ is obtained in \cite{LSZ}.  A dynamical approach of the model is studied in \cite{HS} for $\beta^2 \in [0,\tfrac{4}{3}d)$, and then extended to the whole subcritical regime $\beta^2\in [0,2d)$ in \cite{CH}. These papers construct the natural Langevin dynamics associated to the measure described by \eqref{2DCG}.    The Berezinsky-Kosterlitz-Thouless phase transition occurring at  $\beta^2=2d$  is studied in \cite{falco, frs}.

In this paper we revisit the Sine-Gordon model in the case $d=1$. This situation is also known as the \textit{ boundary Sine-Gordon model} in physics \cite{FZ,S1} because it arises when constraining the $2D$-Coulomb gas particles to live on the boundary of a smooth planar domain and serves as a model for the Kondo effect\footnote{Based on an exact perturbative expansion of the Anderson-Yuval reformulation of the anisotropic Kondo problem, see \cite{FS}.}, resonant tunneling in quantum wires or between quantum Hall edge states.  In this context, we are able to produce a short proof for the renormalization of the partition function of the model  and convergence of Gibbs measures for both the Sine-Gordon model and the $\log$-gas (Theorems \ref{th:main} and \ref{th:mass}). Our approach is flexible in the model (Yukawa/$\log$-gas or even any reasonable perturbation of the $\ln$ in \eqref{2DCG}) and allows us to deal with any possible value of $\alpha\in\R$ and $\beta^2<2d$. Furthermore we can also deal with renormalization of fractional charge density correlation functions (Theorem \ref{th:lecorrels}).   Our argument relies on martingale techniques: so we choose a regularization $X_\epsilon$ of our $\log$-correlated field  such that the family $\big(  \int_{\mathcal{O}}\cos(\beta X_\epsilon (x))  e^{\frac{\beta^2}{2}K_{\gep}(x)} \,\dd x \big)_\epsilon$ appearing in \eqref{defSGintroepsilon} is a martingale. All the details concerning this martingale decomposition and its basic properties are given in Section \ref{sec:setup} where our setup is introduced. The remainder of  our strategy is detailed in Section \ref{sub:orga}. As explained  below the dimensional restriction $d=1$ appears in only one crucial step of the proof. While we believe the method can be extended to higher dimensions it would require  some additional ideas.

 %%%%%%%%%%%%%%%%%%%%%%%%%%%%%%%%%%%%%%%%%%%%%%%%%%%%%%%%%%%%%%%%%%%%%%%%%%%%%%
% \section{The partition function}\label{sec:partition}
%%%%%%%%%%%%%%%%%%%%%%%%%%%%%%%%%%%%%%%%%%%%%%%%%%%%%%%%%%%%%%%%%%%%%%%%%%%%%% 
 \section{Setup and results}\label{sec:setup}
%%%%%%%%%%%%%%%%%%%%
\subsection{Definitions and assumptions}
From  now on, we restrict  the problem to the case of dimension $d=1$. We choose however to keep writing $d$ for the dimension in the statements in order to underline that large chunks of the proof can carry to the general case and and also to make more transparent the correspondence with expected/existing  results in higher dimensions. We replace $\cO$  by a bounded  interval $I\subset \R$. Our method also applies  without changes  if one replaces $I$ by a closed Jordan curve in $\bbR^2$ with distances measured by arclength.
Let $K(x,y)$ be a positive semidefinite kernel satisfying
\begin{equation}\label{genpote}
K(x,y)= \log \frac 1 {|x-y|}+ h(x,y)
\end{equation}
where $h$ is   bounded continuous on $I^2$.
The measure of integration we consider on $I$ (which is simply Lebesgue measure in  \eqref{2DCG}) is a (positive) Borel measure of the form 
\begin{equation}\label{boundedmease}
\mu(\dd x)= g(x)\dd x, \quad \quad \quad g \text{ bounded measurable.}
\end{equation}
Choosing $\mu$ of this form with $g$ bounded is not an artificial restriction since the presence of singularities can change the multipole picture presented above.
 We assume that our kernel can be written in the form 
\begin{equation} \label{sumkernel}
K(x,y)= \lim_{t\to \infty}\int^t_0 Q_u(x,y)\dd u=: \lim_{t\to \infty} K_t(x,y),
\end{equation}
where, for each $u\geq 0$, $Q_u$ is a bounded symmetric positive semidefinite kernel for which we will specify some regularity assumptions (see Assumption \ref{ass:WN} below and an example in Remark \ref{remark1}). The bounded kernel $K_t$ plays the role of $K_{\gep}$ in the introduction (with $e^{-t}$ playing the role of $\epsilon$) and we consider a log-gas partition function of the following form
\begin{equation}\label{SGKI}
\mathcal{Z}_{\alpha,\beta,t}(I)=\sum_{n= 0}^{\infty} \frac{\alpha^n}{n!}\sum_{(\gl_i)_{i=1,\dots,n}\in \{-1,1\}^n}
\int_{I^{n}}\exp\Big(- \beta^2 \sum_{1\le i<j\le n}\lambda_i\lambda_j K_t(x_i,x_j) \Big) \prod_{i=1}^{n} \mu(\dd x_i).
\end{equation}
The state space  of particle configurations is the disjoint union $\Xi_I:= \coprod_{n\ge 0} (  I^n\times \{-1,1\}^n)$, namely the set $\{(n, \mathbf{x},\lambda)\,  : \, n\in \N, \mathbf{x}\in I^n,\lambda\in \{\pm 1\}^n\}$ equipped with its canonical sigma-algebra. A measurable function $f$ on $\Xi_I$ is thus a sequence $(f(n,\cdot,\cdot))_n$ of measurable functions on $  I^n\times \{-1,1\}^n$.  The partition function \eqref{SGKI} induces a probability measure $ \bP_{\alpha,\beta,t}$ (with expectation $ \bE_{\alpha,\beta,t}$)  on $\Xi_I$  by setting for arbitrary bounded measurable function $f$ on $\Xi_I$ (below $ \mathbf{x}=(x_i)_{i=1,\dots,n}$, $\gl=(\gl_i)_{i=1,\dots,n}$ are vectors with $n$ coordinates)
\begin{equation}
 \bE_{\alpha,\beta,t}( f)=\frac{1}{\mathcal{Z}_{\alpha,\beta,t}(I)}\sum_{n= 0}^{\infty} \frac{\alpha^n}{n!}\sum_{\gl\in \{-1,1\}^n} \int_{I^n}f(n, \mathbf{x},\lambda)\exp\Big(- \beta^2 \!\!\! \sum_{1\le i<j\le n}\!\!\! \lambda_i\lambda_j K_t(x_i,x_j) \Big) \prod_{i=1}^{n} \mu(\dd x_i).
\end{equation}
Labels being irrelevant, the only physically relevant quantity under the probability  $\bP_{\alpha,\beta,t}$ is the charge distribution which we denote by $\nu$. It is obtained as the pushforward (or image measure) of the probability measure  $\bP_{\alpha,\beta,t}$ by the map   $\Pi: \Xi_I \mapsto  \cM(I)$ (the set of signed measures on $I$) which sums signed Dirac masses $\pm\gd_{x_i}$ corresponding  to particles' locations and charges
 $$\nu:=  \Pi \left(n,   \mathbf{x}, \gl \right) := \sum_{i=1}^n \gl_i \gd_{x_i}.$$
Now, we construct the Sine-Gordon representation for all values of $t\ge 0$  on the same probability space $(\Omega,\mathcal{F},\P)$
(this is important for our analysis). We consider  thus a centered Gaussian field $(X_t(x))_{t\geq 0,x\in I}$    with covariance function 
\begin{equation}\label{reprsentK}
\E[X_t(x)X_s(y)]=K_{t\wedge s}(x,y),
\end{equation}
which is almost surely continuous in both variables $t,x$ : the existence of such a process results both from the assumption \eqref{sumkernel}, which  ensures that the kernel $K$ is  positive semidefinite  on $(\bbR_+\times I)^2$, and Kolmogorov's continuity criterion, which can be applied thanks to the regularity assumptions on $Q$ made in Assumption \ref{ass:WN} below. 
Also, note that for a fixed $t$ the process $(X_t(x))_{x\in I}$   is a centered continuous Gaussian field with covariance $K_t$. We denote by $\mathcal{F}_t$ ($t\geq 0$) the filtration generated by $\{X_s(x); \,s\leq t, x\in I\}$, and by $\cF_{\infty}$ the $\sigma$- algebra generated by $\cup_{s\ge 0} \cF_s$.
 Considering the martingale
 \begin{equation}\label{def:mt}
M^{(\gb)}_t:= \int_{I} \cos(\gb X_t(x)) e^{\frac{\gb^2}{2}K_t(x,x)}\mu (\dd x)
\end{equation}
we have 
 $\bbE[e^{ \alpha M^{(\gb)}_t}]=\mathcal{Z}_{\alpha/2,\beta,t}$.
There is also a Sine-Gordon representation for the Fourier transform of the mass distribution. Considering a bounded continuous function $\theta: I \to \bbR$ (recall that $\nu$ denotes the random charge distribution under  $\bP_{\alpha,\beta,t}$)  we obtain by repeating the computation leading to \eqref{asdf}
\begin{equation}\label{scroot}
  \bE_{\alpha/2,\beta,t}[ e^{\bi \langle \theta ,  \nu\rangle}]=\bE_{\alpha/2,\beta,t}[ e^{\bi \sum_{i=1}^n \gl_i \theta (z_i)}]
 =\frac{\bbE[e^{ \alpha M^{(\gb,\theta)}_t}]}{\bbE[e^{ \alpha M^{(\gb)}_t}]},
\end{equation}
where we introduce another martingale $M^{(\gb,\theta)}$ by
\begin{equation}\label{mbetateta}
 M^{(\gb,\theta)}_t:= \int_{I} \cos(\gb X_t(x)+\theta(x)) e^{\frac{\gb^2}{2}K_t(x,x)}\mu (\dd x). 
 \end{equation}
Our aim is thus to obtain results concerning the asymptotic behavior of the Laplace transform of  $M^{(\gb,\theta)}_t$ for a large class of $\theta$ and deduce consequences concerning the charge distribution.
We obtain such results under the following regularity assumption for the covariance function (recall $d=1$).
\begin{assumption}{\bf (Smooth white noise decomposition)}  \label{ass:WN}
 \begin{enumerate} 
 \item For every $u$, $(x,y)\mapsto Q_u(x,y)$ is of class $C^2$ and there exists a constant $C$ such that for any $x,y\in I$ and $u>0$
 \begin{equation}\begin{split}\label{laderive}
    |Q_u(x,y)| &\le C  (1+e^u |x-y|)^{-(d+1)},\\
    |\partial_x Q_u(x,y)| &\le C e^{u}(1+ e^u|x-y|)^{-(d+1)},\\
       |\partial_x \partial_y Q_u(x,y)| &\le C e^{2u}(1+e^u|x-y|)^{-(d+1)}.
\end{split}
 \end{equation}

\item $\sup_{t\geq 0} \sup_{x,y\in I}|K_t(x,y)+\ln (|x-y|\vee e^{-t} )|<\infty$.%\blue{I write 

\end{enumerate}
\end{assumption}
\begin{remark}\label{remark1} The kernel $Q_u$ in \eqref{sumkernel} is meant to account for the correlations present at scale
$e^{-u}$. A prototypical example to have in mind is  $Q_u(x,y):=Q(e^ux, e^u y)$ where $Q$ is a fixed smooth translation invariant covariance function on $\bbR^d$ for which $Q(x,0)$ and its first two derivatives display fast decay. The Gaussian kernel $Q_u(x,y)= \exp(-\frac{(e^{u}|x-y|)^2}{2})$ which perfectly fits this framework is of peculiar importance as it is the basis for the construction of $2D$ Gaussian Free Fields (including Dirichlet/Neumann GFF or the massive GFF appearing in the Sine-Gordon representation of the Yukawa gas), which we can  restrict on a one-dimensional manifold.
\end{remark}
\begin{remark}
Note that Assumption \ref{ass:WN} implies in particular that $K:=\lim_{t\to\infty}K_t$ exists and is of the form given in Equation \eqref{genpote}.
\end{remark}
We let $(\mathcal{C}^{(\beta,\theta)}_k(t))_k$ denote the successive cumulants  of the martingale $M^{(\beta,\theta)}_t$ (a reminder of the definition of cumulants is given in Section \ref{sec:cum}) and set $\mathcal{C}^{(\beta,\theta=0)}_k(t)=\mathcal{C}^{(\beta)}_k(t) $. 
In the Sine-Gordon representation, the multipole picture presented in Section \ref{sub:multi} corresponds to an explosion of even order  cumulants of the martingale when $t\to\infty$. 
This phenomenon is described by the following sequence of successive   thresholds $(\beta_n)_{n\in\N^*}$ of the Sine-Gordon model (recall that here $d=1$) 
\begin{equation}\label{seuil}
\beta_0:=0,\quad \forall n \ge 1, \quad \quad \beta_n:= \sqrt{2d(1-\tfrac{1}{2n})}.
\end{equation}
 For  $\beta \in [\beta_{n-1},\beta_{n})$, $\alpha\in \bbR$, and $f$ a bounded ($\cF_{\infty}$-)measurable function, we define  the renormalized partition function which integrates $f$ as
 \begin{equation}\label{th:cv}
\bar{\mathcal{Z}}_{\alpha/2,\gb,t}(f)=:\E[f(X)e^{\alpha M^{(\gb)}_t}]e^{-\sum_{i=1}^{n-1}\frac{\alpha^{2i}}{(2i)!} \mathcal{C}^{(\beta)}_{2i}(t)}.
\end{equation}

\subsection{Renormalization for the log-gas and its Sine-Gordon representation}

Our first result shows that the renormalized partition functions converge,
and that the tilt by $M_t$ induces a (non-Gaussian) limiting measure for the process 
$X_t$ when $t$ tends to infinity. Recall that our results hold in dimension $d=1$.
 \begin{theorem}\label{th:main}
Assume $\beta^2<2d$ and $\alpha\in\R$. If $f$ is bounded and measurable with respect to 
$\cF_{t_0}$ for some $t_0\in (0,\infty)$, then the following  limit is well defined
$$\mathcal{Z}_{\alpha,\gb}(f):=\lim_{t\to\infty}\bar{\mathcal{Z}}_{\alpha,\gb,t}(f).$$
Furthermore, the mapping
$$f\mapsto   \bE_{\alpha,\beta}(f):= \frac{\mathcal{Z}_{\alpha,\beta}(f)}{\mathcal{Z}_{\alpha,\beta}(1)} $$ 
can be extended  to all bounded ($\cF_{\infty}$-)measurable functions $f$ and defines a probability measure on  $\cF_{\infty}$, under which the process $t\mapsto X_t(x)$ is almost surely continuous, for all $x\in I$.
\end{theorem}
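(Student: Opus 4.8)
\emph{Strategy and reduction.} The plan is to reduce the statement to a uniform-in-$t$ control of the (conditional) cumulants of $M^{(\beta)}$, supplied by two ingredients developed below: an exact recursion representing the cumulants of a random variable on a filtered space through iterated predictable brackets of a finite family of martingales, and a quantitative electrostatic (Onsager-type) inequality that bounds the energy of a near-neutral charge configuration by the Wasserstein distance between its $+$ and $-$ parts. Fix $\beta\in[\beta_{n-1},\beta_n)$, $\alpha\in\R$, $t_0>0$ and a bounded $\cF_{t_0}$-measurable $f$. Since $f(X)$ is $\cF_{t_0}$-measurable and $C_t:=\sum_{i=1}^{n-1}\frac{\alpha^{2i}}{(2i)!}\mathcal C^{(\beta)}_{2i}(t)$ is deterministic, for $t\ge t_0$
\[
\bar{\mathcal Z}_{\alpha,\beta,t}(f)=\E\!\big[f(X)\,R^{(t_0)}_t\big],\qquad R^{(t_0)}_t:=e^{-C_t}\,\E\!\big[e^{\alpha M^{(\beta)}_t}\mid\cF_{t_0}\big]
\]
(the rescaling $\alpha\mapsto\alpha/2$ implicit in $\bar{\mathcal Z}$ is immaterial since $\alpha$ ranges over $\R$). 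As $R^{(t_0)}_t\ge 0$ and $f$ is bounded, it suffices to prove that $R^{(t_0)}_t$ converges $\P$-a.s.\ and in $L^1(\P)$; then $\bar{\mathcal Z}_{\alpha,\beta,t}(f)\to\E[f(X)R^{(t_0)}_\infty]=:\mathcal Z_{\alpha,\beta}(f)$ for all such $f$.

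\emph{The cumulant identity.} The recursion produces increasing predictable processes $\mathcal A^{(k)}$, $k\ge 2$ (with $\mathcal A^{(k)}_0=0$), built from iterated brackets of a finite family of martingales, the first being $M^{(\beta)}$, such that $\E[\mathcal A^{(k)}_t]=\mathcal C^{(\beta)}_k(t)$ and, more importantly, $\mathcal C^{(\beta)}_k(t\mid\cF_{t_0})=\E[\mathcal A^{(k)}_t-\mathcal A^{(k)}_{t_0}\mid\cF_{t_0}]$ for the conditional cumulants; together with $\mathcal C^{(\beta)}_1(t\mid\cF_{t_0})=M^{(\beta)}_{t_0}$ this gives
\[
\log R^{(t_0)}_t=\alpha M^{(\beta)}_{t_0}+\sum_{k\ge 2}\frac{\alpha^{k}}{k!}\,\E\!\big[\mathcal A^{(k)}_t-\mathcal A^{(k)}_{t_0}\mid\cF_{t_0}\big]-\sum_{i=1}^{n-1}\frac{\alpha^{2i}}{(2i)!}\,\E[\mathcal A^{(2i)}_t].
\]
(For $k=2$, $\mathcal A^{(2)}=\langle M^{(\beta)}\rangle$ and $\tfrac{\alpha^2}{2}\mathcal C^{(\beta)}_2(t)$ is the compensator in the Dol\'eans--Dade exponential of $\alpha M^{(\beta)}$, which motivates the whole scheme.) Each $\mathcal A^{(k)}$ enters the analysis through its predictable density, a multi-charge integral $\int\cdots e^{-\beta^2\sum_{a<b}\lambda_a\lambda_bK_t(y_a,y_b)}\prod_a\mu(\dd y_a)$ localised to clusters of charges at comparable scales; moreover, by the independent-increments structure of $X$, the whole conditional problem is a Sine--Gordon martingale of the form \eqref{mbetateta} with a random but $\P$-a.s.\ bounded continuous background $\theta=\beta X_{t_0}$ and the bounded measure $e^{\frac{\beta^2}{2}K_{t_0}(x,x)}\mu(\dd x)$, i.e.\ the generality for which everything is set up, so that translation invariance is never used.

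\emph{Convergence term by term, and the main obstacle.} The three kinds of terms in the identity are treated as follows. For $k\notin\{2,4,\dots,2(n-1)\}$ (odd $k$, or even $k\ge 2n$): since $\beta<\beta_n\le\beta_{\lceil k/2\rceil}$ the cumulant $\mathcal C^{(\beta)}_k(t)=\E[\mathcal A^{(k)}_t]$ stays bounded as $t\to\infty$, hence $\mathcal A^{(k)}_\infty<\infty$ $\P$-a.s.\ and the term converges a.s.\ by conditional monotone convergence. For $k=2i$ subtracted, $\E[\mathcal A^{(2i)}_t-\mathcal A^{(2i)}_{t_0}\mid\cF_{t_0}]-\E[\mathcal A^{(2i)}_t]$ equals a $t$-independent piece plus $\E[\mathcal A^{(2i)}_t-\E\mathcal A^{(2i)}_t\mid\cF_{t_0}]$, and the latter converges a.s.\ because conditioning on $\cF_{t_0}$ only affects scales above $e^{-t_0}$ whereas the divergence of $\mathcal A^{(2i)}$ is concentrated at arbitrarily small scales (quantitative scale decorrelation from Assumption \ref{ass:WN}). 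Finally, to sum the series and to upgrade a.s.\ convergence to $L^1$ one needs, uniformly in $t$ and in the background $\theta$, bounds of the form $\sum_k\frac{|\alpha|^k}{k!}\sup_t\|\mathcal A^{(k)}_t\|<\infty$ for every $\alpha\in\R$ and an $L^p$-bound ($p>1$) on $R^{(t_0)}_t$. Proving these uniform cumulant/bracket bounds is, in my view, the genuinely hard step, and it rests on the refined Onsager/Wasserstein inequality: a cluster with nonzero total charge is exponentially penalised, while a near-neutral cluster (a multipole) has its energy gain controlled by the Wasserstein distance between its $\pm$ clouds, which both produces the $2i$-pole divergence at level $i$ and shows that the $2n$-pole stays summable precisely because $\beta^2<\beta_n^2=2d(1-\tfrac{1}{2n})$. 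The restriction $d=1$ is used only here, through the packing of alternating $\pm$ charges along a one-dimensional set.

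\emph{Conclusion.} With these inputs $\log R^{(t_0)}_t$ converges $\P$-a.s., so $R^{(t_0)}_t$ converges $\P$-a.s.; the uniform $L^p$-bound gives uniform integrability, hence $R^{(t_0)}_t\to R^{(t_0)}_\infty$ in $L^1(\P)$, and by the reduction $\bar{\mathcal Z}_{\alpha,\beta,t}(f)\to\mathcal Z_{\alpha,\beta}(f):=\E[f(X)R^{(t_0)}_\infty]$ for every bounded $\cF_{t_0}$-measurable $f$; in particular $\mathcal Z_{\alpha,\beta}(1)=\E[R^{(t_0)}_\infty]>0$ since $R^{(t_0)}_\infty>0$ a.s. For each $t_0$, $f\mapsto\bE_{\alpha,\beta}(f)=\mathcal Z_{\alpha,\beta}(f)/\mathcal Z_{\alpha,\beta}(1)$ is integration against the probability measure on $(\Omega,\cF_{t_0})$ with density $R^{(t_0)}_\infty/\mathcal Z_{\alpha,\beta}(1)$; since passing to the limit in $R^{(t_0)}_t=\E[R^{(t_1)}_t\mid\cF_{t_0}]$ (valid for $t_1>t_0$, by the $L^1$ convergence) gives $R^{(t_0)}_\infty=\E[R^{(t_1)}_\infty\mid\cF_{t_0}]$, these densities form a nonnegative $(\cF_{t_0})_{t_0}$-martingale and the corresponding measures are consistent; as $\cF_\infty=\sigma\big(\bigcup_{t_0>0}\cF_{t_0}\big)$ and the path space of the continuous process $X$ is Polish, Kolmogorov's extension theorem yields a single probability measure $\bE_{\alpha,\beta}$ on $\cF_\infty$ extending them. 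Finally, for each $m\in\N$ the event that $t\mapsto X_t(x)$ is continuous on $[0,m]$ for every $x\in I$ lies in $\cF_m$ and has $\P$-probability $1$, hence $\bE_{\alpha,\beta}$-probability $1$ by absolute continuity of $\bE_{\alpha,\beta}$ on $\cF_m$; intersecting over $m$ gives $\bE_{\alpha,\beta}$-a.s.\ continuity of $t\mapsto X_t(x)$ for all $x\in I$.
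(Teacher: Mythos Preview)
Your reduction to the conditional quantity $R^{(t_0)}_t$ and your handling of the measure extension at the end are fine, but the core analytic step has a genuine gap: you write $\log R^{(t_0)}_t$ as an \emph{infinite} cumulant series $\sum_{k\ge 2}\frac{\alpha^k}{k!}\,\E[\mathcal A^{(k)}_t-\mathcal A^{(k)}_{t_0}\mid\cF_{t_0}]$ and then say that summability (``$\sum_k\frac{|\alpha|^k}{k!}\sup_t\|\mathcal A^{(k)}_t\|<\infty$ for every $\alpha$'') is the hard step. It is not merely hard; nothing in the paper establishes such bounds, and in fact you would be asserting that the log-Laplace transform of $M^{(\beta)}_t$ is entire in $\alpha$ with a uniform (in $t$) radius of convergence. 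The inductive estimates on $F^{(i,p)}$ (Proposition~\ref{prop:cumul}) give constants $C(i,\gep)$ with no control on their growth in $i$, so there is no reason the series should sum. A second, related imprecision: you treat $\mathcal A^{(k)}$ as a single increasing predictable process in $t$, but in the paper's construction the auxiliary martingales $M^{(j,t)}$ and hence $A^{(i)}_t$ depend on the terminal time $t$ in a nontrivial way (through the integrals $\int_s^t F(\cdot,u,t)\,e^{-\frac{\beta^2}{2}U(\cdot,s,u)}\dd u$), so the ``$\mathcal A^{(k)}_t-\mathcal A^{(k)}_{t_0}$'' calculus and the conditional monotone convergence argument do not apply as written.

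The paper sidesteps both issues by never summing an infinite series. Lemma~\ref{lem:cum} is an \emph{exact finite} identity: for any fixed $j$ (here $j=2n-1$),
\[
\E[f\,e^{\alpha M_t}]e^{-\sum_{i=1}^{2n-1}\frac{\alpha^i}{i!}\cC_i(M_t)}
=\E\!\left[f\,e^{N^{(2n-1,\alpha,t)}_t-\frac12\langle N^{(2n-1,\alpha,t)}\rangle_t}\,e^{Q^{(2n-1,\alpha)}_t}\right],
\]
with $Q^{(2n-1,\alpha)}_t$ a \emph{finite} linear combination of brackets $\langle M^{(l,t)},M^{(k-l,t)}\rangle$ for $k\in\lint 2n,4n-2\rint$. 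Because $\beta^2<2-\tfrac1n$ and each such $k\ge 2n$, Proposition~\ref{prop:cumul} gives $\|Q^{(2n-1,\alpha)}_{t,t}-Q^{(2n-1,\alpha)}_{s,t}\|_\infty\to 0$ uniformly in $t\ge s$; interpreting $e^{N-\frac12\langle N\rangle}$ as a probability density turns this into a two-sided bound on the log-ratio, and the convergence of the $s$-frozen expectation as $t\to\infty$ follows from item~(3) of Proposition~\ref{prop:cumul}. No $L^p$ bound on $R^{(t_0)}_t$, no control of the growth of $\cC_k$ in $k$, and no analyticity of the log-MGF are ever needed. If you want to salvage your outline, replace the infinite series by this finite-order identity and control the remainder $Q^{(2n-1,\alpha)}$ in $L^\infty$; that is exactly where the Onsager/Wasserstein estimate enters, and it only has to be applied to finitely many bracket orders.
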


\begin{remark}\label{oddcum}
In \eqref{th:cv}, the sum $\sum_{i=1}^{n-1}\frac{\alpha^{2i}}{(2i)!} \mathcal{C}^{(\beta)}_{2i}(t)$ in the exponential factor represents the diverging terms that have to be subtracted to $\E[f(X) e^{\alpha M_t}]$ in order to get a converging expression. We will show below that for $\beta^2<2d$ only cumulants of even order might diverge when $t\to \infty$ while those of odd order converge.
The sum can hence be replaced by  $e^{\sum_{i=1}^{2n-1}\frac{\alpha^i}{i!} \mathcal{C}^{(\beta)}_{i}(t)}$ without modifying the result  (besides a modification of the value of the limit $\mathcal{Z}_{\alpha,\gb}(f)$ by a factor that does not depend on $f$ but this is irrelevant).
\end{remark}

\begin{remark}
 When $\gb^2<\beta_{2}$, the convergence result  \eqref{th:cv} is an instance of mod-Gaussian convergence for the variable $M^{(\gb)}_t$ (when $t$ goes to infinity) in the terminology of \cite{JKN}. Indeed recall that a sequence of real-valued random variables $(Z_n)_n$ is said to converge in the mod-Gaussian sense if we can find two sequences $(m_n)_n\in \R^\N$ and $(\gamma_n)_n\in(\R_+)^\N$ and a complex-valued function $\Phi$ continuous at $0$ such that
 $$\forall u\in\R,\quad \lim_{n\to\infty}e^{-iu  m_n+u^2\gamma_n/2}\E[e^{iuZ_n}]=\Phi(u).$$
This notion of convergence is useful to  extract the interesting  behaviour of some sequence of random variables beyond ``coarse diverging  Gaussian contributions". In the Sine-Gordon model this notion is relevant for $\beta_1\leq \beta<\beta_2$ (the convergence being standard for $0\leq \beta<\beta_1$). 
When $\beta \geq  \beta_{2}$, the convergence result  \eqref{th:cv} generalizes the notion of mod-$\phi$ convergence exposed in \cite{FMN} as it requires subtracting higher order Taylor expansion terms  in $u$.
\end{remark}

\noindent While this is not an immediate consequence of the result,
 the proof of Theorem \ref{th:main} provides also the key elements to establish the convergence in law of the charge distribution in the regime $\gb^2<2d$.

\begin{theorem}\label{th:mass}
 When $\gb\in (0,\sqrt{2d})$, for any $1/2$-H\"older continuous function $\theta$, the following limit exists
 \begin{equation}
  \lim_{t\to \infty } \bE_{\alpha,\beta,t}[ e^{\bi \langle \theta ,  \nu\rangle}]= :\Psi(\theta)
 \end{equation}
Moreover $\Psi$ is continuous for the H\"older norm

$$\|\theta\|_{1/2}:= \max_{x\in I} |\theta(x)|+ \max_{x,y\in I^2} \frac{|\theta(x)-\theta(y)|}{\sqrt{|x-y|}}.$$
In particular, the charge distribution $\nu$ converges in law under $\bP_{\alpha,\beta,t}$ as $t\to\infty$ in the Schwartz space of tempered distributions\footnote{We consider here the space of tempered distributions as the topological dual of the Schwartz space of smooth functions with fast decay at infinity equipped with the usual semi-norms. We do not recall this here but the reader can refer to \cite{bierme} for a reminder of the topological setup as well as  a proof of the L\'evy continuity theorem on the space of tempered distributions.}.
 \end{theorem}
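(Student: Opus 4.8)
The plan is to reduce Theorem~\ref{th:mass} to Theorem~\ref{th:main} via the Sine-Gordon representation \eqref{scroot}, which identifies $\bE_{\alpha/2,\beta,t}[e^{\bi\langle\theta,\nu\rangle}]$ with the ratio $\bbE[e^{\alpha M^{(\beta,\theta)}_t}]/\bbE[e^{\alpha M^{(\beta)}_t}]$. The denominator is handled directly by Theorem~\ref{th:main} applied with $f\equiv 1$: after dividing by the (field-independent) counter-term $e^{\sum_{i=1}^{n-1}\frac{\alpha^{2i}}{(2i)!}\cC^{(\beta)}_{2i}(t)}$, it converges to $\cZ_{\alpha,\beta}(1)\neq 0$. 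So the real content is the numerator: I must show that $\bbE[e^{\alpha M^{(\beta,\theta)}_t}]$, suitably renormalized, converges for every $1/2$-H\"older $\theta$, and that the limit depends continuously on $\theta$ in $\|\cdot\|_{1/2}$. The key observation is that $M^{(\beta,\theta)}_t$ is built from $\cos(\beta X_t(x)+\theta(x))$, i.e.\ from a phase-shifted field, and the phase shift $\theta(x)$ plays exactly the role of a local change of background geometry; the whole induction scheme producing the cumulant bounds goes through because the electrostatic/Onsager-type inequalities used to control the terms (Wasserstein-refined energy bounds) are insensitive to multiplying charge densities by unimodular factors $e^{\bi\theta(x)}$. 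Concretely, expanding $\cos(\beta X_t + \theta)=\tfrac12(e^{\bi\theta}e^{\bi\beta X_t}+e^{-\bi\theta}e^{-\bi\beta X_t})$ and repeating the computation leading to \eqref{asdf}, one sees that the cumulants $\cC^{(\beta,\theta)}_k(t)$ are given by the same charge-integral expressions as $\cC^{(\beta)}_k(t)$ but with each vertex weight $e^{\pm\bi\beta X}$ decorated by $e^{\pm\bi\theta(x_i)}$; since $|e^{\pm\bi\theta(x_i)}|=1$, the same a priori bounds on $|\cC^{(\beta,\theta)}_k(t)|$ hold uniformly, and only even cumulants up to order $2(n-1)$ (with $\beta\in[\beta_{n-1},\beta_n)$) can diverge.

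The steps, in order: (1) record that $\bbE[e^{\alpha M^{(\beta,\theta)}_t}]e^{-\sum_{i=1}^{n-1}\frac{\alpha^{2i}}{(2i)!}\cC^{(\beta,\theta)}_{2i}(t)}$ converges as $t\to\infty$, by rerunning the martingale/induction argument behind Theorem~\ref{th:main} with the cosine replaced by $\cos(\beta X_t+\theta)$ — this is where one cites that no step of that proof used $\theta=0$ except through bounds that survive $|e^{\bi\theta}|=1$; call the limit $\widetilde\cZ_{\alpha,\beta}(\theta)$. (2) Show that the counter-terms reorganize: write $e^{-\sum_i \frac{\alpha^{2i}}{(2i)!}\cC^{(\beta,\theta)}_{2i}(t)} = e^{-\sum_i \frac{\alpha^{2i}}{(2i)!}\cC^{(\beta)}_{2i}(t)} \cdot e^{\sum_i \frac{\alpha^{2i}}{(2i)!}(\cC^{(\beta)}_{2i}(t)-\cC^{(\beta,\theta)}_{2i}(t))}$ and prove that the differences $\cC^{(\beta)}_{2i}(t)-\cC^{(\beta,\theta)}_{2i}(t)$ converge as $t\to\infty$ to a finite limit depending continuously on $\theta$. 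The point is that the \emph{divergent part} of each even cumulant comes from the diagonal/collapsing-multipole regime where the two points $x_i,x_j$ of a neutral pair are within distance $e^{-t}$; there $\theta(x_i)-\theta(x_j)=O(e^{-t/2})$ by H\"older continuity, so the decorations $e^{\bi(\theta(x_i)-\theta(x_j))}$ differ from $1$ by $O(e^{-t/2})$, which is summable against the (at most) logarithmically/polynomially divergent density and so the difference of divergent parts stays bounded and converges. (3) Conclude $\Psi(\theta)=\widetilde\cZ_{\alpha,\beta}(\theta)/\cZ_{\alpha,\beta}(1) \cdot \lim_t e^{\sum_i\frac{\alpha^{2i}}{(2i)!}(\cC^{(\beta)}_{2i}(t)-\cC^{(\beta,\theta)}_{2i}(t))}$, well defined and finite. (4) Continuity in $\|\theta\|_{1/2}$: the map $\theta\mapsto\widetilde\cZ_{\alpha,\beta}(\theta)$ and the map $\theta\mapsto\lim_t(\cC^{(\beta)}_{2i}-\cC^{(\beta,\theta)}_{2i})$ are each built from dominated integrals of quantities continuous in $\theta$ for the uniform norm (hence for $\|\cdot\|_{1/2}$), with the H\"older seminorm entering only through the uniform-in-$t$ domination in step~(2); standard dominated convergence gives continuity. (5) Finally, deduce convergence of $\nu$ in law in the space of tempered distributions: having $\bE_{\alpha,\beta,t}[e^{\bi\langle\theta,\nu\rangle}]\to\Psi(\theta)$ for all $\theta$ in the Schwartz space (which is contained in $C^{1/2}$) with $\Psi$ continuous at $0$ for a norm controlled by the Schwartz seminorms, one applies the L\'evy continuity theorem on $\mathcal S'$ (as in \cite{bierme}).

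The main obstacle I anticipate is step~(2): controlling the difference of the \emph{divergent} parts of the even cumulants with and without the phase $\theta$. One has to be genuinely careful that the cancellation $\theta(x_i)-\theta(x_j)=O(|x_i-x_j|^{1/2})$ is enough to kill the divergence coming from \emph{every} diverging even cumulant up to order $2(n-1)$ — not just the quadratic one — since for $\beta$ close to $\beta_n$ the multipole of order $2n-2$ diverges like a power of $e^{-t}$, and one needs the extra $|x_i-x_j|^{1/2}$ factors distributed over the collapsing cluster to more than compensate. This requires inspecting the structure of the induction from the proof of Theorem~\ref{th:main} (the martingale-bracket recursion for cumulants and the Wasserstein-refined Onsager inequality) and checking that inserting unimodular factors that vary H\"older-continuously only improves the bounds; concretely, the Wasserstein distance between the $+$ and $-$ decorated charges is bounded by that between the undecorated ones plus an $O(\|\theta\|_{1/2})$ correction on the collapsing scale, which feeds directly into the energy estimate. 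Once that uniform-in-$t$ domination is in hand, the remaining steps are routine applications of dominated convergence and the L\'evy continuity theorem.
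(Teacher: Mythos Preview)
Your proposal is correct and follows essentially the same route as the paper: reduce via \eqref{scroot}, rerun the cumulant machinery of Theorem~\ref{th:main} with the shifted cosine $\cos(\beta X_t+\theta)$ (the induction goes through verbatim because the functions $F^{(i,j,p)}$ in \eqref{daform}/\eqref{larmoire} are independent of $\theta$), then show separately that the cumulant \emph{differences} $\cC^{(\beta)}_{2i}(t)-\cC^{(\beta,\theta)}_{2i}(t)$ converge, and finally invoke L\'evy--Schwartz.

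One imprecision worth correcting in your step~(2): the mechanism is not that ``the Wasserstein distance between decorated charges'' changes (that distance is between particle \emph{positions} and does not see $\theta$). What the paper actually does for the dangerous case $p=i/2$ is use the trigonometric bound $|\cos b-\cos a|\le \tfrac12|a+b|\,|a-b|$ on the factor $\cos(\beta\sum\lambda^{(p)}_k\theta(x_k))-\cos(\beta\sum\lambda^{(p)}_k\theta'(x_k))$; since $\sum_k\lambda^{(p)}_k\eta(x_k)$ for any $1/2$-H\"older $\eta$ is bounded by $p\|\eta\|_{1/2}\sqrt{m(\bx)}$ (pairing via the optimal $\sigma$ in \eqref{wasse}), this yields a full factor $m(\bx)$, which more than cancels the $m(\bx)^{-\beta^2/2}$ coming from the Onsager inequality \eqref{secondestbis}. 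That is the precise reason $1/2$-H\"older suffices, and it works uniformly for every even cumulant, not just the quadratic one.
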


The above convergence result for the charge distribution $\nu$ towards a non degenerate limit occurs for $\beta^2\in[d,2d)$ in spite of the fact that the total number of particles tends to infinity. This is due to the dipole picture mentioned in the introduction. While an infinite number of charged particles are present in the log-gas, short range cancelations makes the quantity $\langle \theta ,  \nu\rangle$ well defined for $\theta$ sufficiently regular.%  \rr{Also, we use a L\'evy type result for the convergence in the space of tempered distributions; in particular, tightness is hidden in the continuity of the characteristic function.}

\subsection{Renormalization for correlation functions}\label{sec:frac}
%%%%%%%%%%%%%%%%%%%%%%%%%%%%%%%%%%%%%%%

Our last result concerns the asymptotics for the correlation functions associated with fractional charges.
The aim is, given $m\in\N^*$, $\boldsymbol{\eta}=(\eta_1,\dots, \eta_m)\in \bbR^m$ and $\mathbf{z}=(z_1,\dots,z_m)\in I^m$ distinct (i.e. $z_i\not=z_j$ for $i\not=j$), to determine the asymptotics of the partition function of a system, where charges $\eta_i$ have been placed on the sites $z_i$ for $i\in \lint 1, m \rint$, defined by
\begin{equation}\label{zhatcorrel}
 \hat{\mathcal{Z}}_{\alpha,\beta,t}({\bf z},\boldsymbol{\eta}):=\sum_{n= 0}^{\infty} \frac{\alpha^n}{n!}\sum_{\gl\in \{-1,1\}^n}
\int_{I^{n}}e^{- \beta^2 \left(\sum_{1\le i<j\le n}\lambda_i\lambda_i K_t(x_i,x_j)+ \sum_{i=1}^n \sum_{l=1}^m \gl_i \eta_l  K_t(x_i,z_l) \right)}\prod_{i=1}^{n}\mu(\dd x_i).
\end{equation}
More precisely, we are interested in the asymptotics of (recall \eqref{SGKI}) $\hat{\mathcal{Z}}_{\alpha,\beta,t}({\bf z},\boldsymbol{\eta})/\mathcal{Z}_{\alpha,\beta,t}(I)$. Also, we will see that the Sine-Gordon representation of $ \hat{\mathcal{Z}}_{\alpha,\beta,t}({\bf z},\boldsymbol{\eta})$ yields
\begin{equation} \label{espcorrel}
  \hat{\mathcal{Z}}_{\alpha/2,\beta,t}({\bf z},\boldsymbol{\eta})= \bbE\left[e^{\sum_{l=1}^m \left(\textbf{i}\beta \eta_l X_t(z_l)+ \frac{\gb^2 \eta^2_l}{2}K_t(z_l,z_l)\right)} e^{\alpha  M^{(\beta)}_t }\right],
\end{equation}  
 with $M^{(\beta)}_t$ defined by \eqref{def:mt}, which corresponds to the standard definition of correlation functions in the Sine-Gordon model.

\begin{theorem}\label{th:lecorrels}
Setting $\|\eta\|_{\infty}:=\max|\eta_i|$,   if $(1+2\|\eta\|_{\infty})\beta^2< 2$, 
then we have, for all $\alpha\in \bbR$ and all set of $m$ distinct points
\begin{equation}
\langle \prod_{l=1}^me^{i\beta\eta_lX(z_l)}\rangle :=\lim_{t\to \infty}  \frac{\hat{\mathcal{Z}}_{\alpha,\beta,t}({\bf z},\boldsymbol{\eta})}{\mathcal{Z}_{\alpha,\beta,t}(I)}
\end{equation}
exists and is continuous in $\boldsymbol{\eta}$ and ${\bf z}$.
 
\end{theorem}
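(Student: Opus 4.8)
The plan is to absorb the frozen fractional charges into a complex Girsanov tilt of the field and thereby reduce Theorem \ref{th:lecorrels} to the situation of Theorem \ref{th:main}. Starting from \eqref{espcorrel}, I would apply the Cameron--Martin formula for the Gaussian process $(X_s)_{s\le t}$ with the purely imaginary shift generated by $\mathbf i\beta\sum_{l}\eta_l X_t(z_l)$; writing $w_t(x):=\sum_{l=1}^m\eta_l K_t(x,z_l)$, this gives
\[
\hat{\mathcal Z}_{\alpha/2,\beta,t}(\mathbf z,\boldsymbol\eta)=e^{-\beta^2\sum_{1\le l<l'\le m}\eta_l\eta_{l'}K_t(z_l,z_{l'})}\;\E\!\left[e^{\alpha\tilde M_t}\right],\qquad \tilde M_t:=\int_I\cos\!\big(\beta X_t(x)+\mathbf i\beta^2 w_t(x)\big)e^{\frac{\beta^2}{2}K_t(x,x)}\mu(\dd x).
\]
The diagonal divergences $K_t(z_l,z_l)$ cancel in this identity, and since the $z_l$ are distinct $K_t(z_l,z_{l'})\to K(z_l,z_{l'})$, so the deterministic prefactor converges to $\prod_{l<l'}|z_l-z_{l'}|^{\beta^2\eta_l\eta_{l'}}e^{-\beta^2\eta_l\eta_{l'}h(z_l,z_{l'})}$, which is continuous in $(\mathbf z,\boldsymbol\eta)$ on the set of $m$-tuples of distinct points. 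Since the renormalized denominator $\bar{\mathcal Z}_{\alpha/2,\beta,t}(1)=\mathcal Z_{\alpha/2,\beta,t}(I)\,e^{-\sum_{i=1}^{n-1}\frac{\alpha^{2i}}{(2i)!}\mathcal C^{(\beta)}_{2i}(t)}$ converges by Theorem \ref{th:main} to the positive constant $\mathcal Z_{\alpha/2,\beta}(1)$, it suffices to prove that $\E[e^{\alpha\tilde M_t}]\,e^{-\sum_{i=1}^{n-1}\frac{\alpha^{2i}}{(2i)!}\mathcal C^{(\beta)}_{2i}(t)}$ converges, with a limit continuous in $(\mathbf z,\boldsymbol\eta)$ (the relabelling $\alpha\leftrightarrow 2\alpha$ being harmless).

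The family $\tilde M_t$ is not a martingale because $w_t$ depends on $t$, so I would first replace it by the honest complex $(\mathcal F_t)$-martingale
\[
\hat M_t:=\int_I\cos\!\big(\beta X_t(x)+\mathbf i\beta^2 w_\infty(x)\big)e^{\frac{\beta^2}{2}K_t(x,x)}\mu(\dd x),\qquad w_\infty(x):=\sum_{l}\eta_l K(x,z_l),
\]
a sum of Wick exponentials $e^{\pm\mathbf i\beta X_t(x)+\frac{\beta^2}{2}K_t(x,x)}$ weighted by the fixed functions $e^{\mp\beta^2 w_\infty}$, well defined as soon as $\int_I e^{\beta^2|w_\infty(x)|}\mu(\dd x)<\infty$, i.e. as soon as $\beta^2\|\eta\|_\infty<d$ — which is implied by the hypothesis. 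Using the elementary bound $|\cos(a+\mathbf i b_1)-\cos(a+\mathbf i b_2)|\le C|b_1-b_2|\,e^{\max(|b_1|,|b_2|)}$ and the covariance estimates of Assumption \ref{ass:WN}, the $L^1$ distance satisfies
\[
\E\,|\tilde M_t-\hat M_t|\ \le\ C\int_I |w_t(x)-w_\infty(x)|\,e^{\beta^2\max(|w_t(x)|,|w_\infty(x)|)}\,e^{\frac{\beta^2}{2}K_t(x,x)}\,\mu(\dd x),
\]
an integral concentrated near the $z_l$ whose dominant contribution comes from the shell $|x-z_l|$ of order $e^{-t}$ (where a gas charge interacting with $\eta_l$ carries weight $\simeq e^{\beta^2|\eta_l|t}$, the Wick factor is $e^{\frac{\beta^2}{2}t+O(1)}$, and the $\mu$-mass is of order $e^{-t}$), so that $\E|\tilde M_t-\hat M_t|\le C\,e^{(\frac{\beta^2}{2}(1+2\|\eta\|_\infty)-d)t}$, which tends to $0$ exactly under the hypothesis $(1+2\|\eta\|_\infty)\beta^2<2d$. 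Together with the uniform exponential integrability of $\hat M_t$ furnished by the electrostatic bounds, this reduces the problem to the convergence of $\E[e^{\alpha\hat M_t}]\,e^{-\sum_{i=1}^{n-1}\frac{\alpha^{2i}}{(2i)!}\mathcal C^{(\beta)}_{2i}(t)}$.

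Finally, I would run the machinery of Theorem \ref{th:main} on $\hat M_t$. Its cumulants $\hat{\mathcal C}_k(t)$ are given by the same recursion in terms of brackets of martingales, the only new feature being that each of the $k$ gas particles carries, besides its $\pm1$ charge, an extra deterministic weight $e^{\pm\beta^2 w_\infty(x_i)}$ which behaves like $|x_i-z_l|^{\mp\beta^2\eta_l}$ near $z_l$; equivalently, $\hat{\mathcal C}_k(t)$ is a partially screened charge correlation function in which $k$ gas particles interact among themselves through $K_t$ and with the frozen charges $\eta_l$ at the $z_l$. The refined electrostatic inequality bounding the energy of a configuration by the Wasserstein distance between its $+$ and $-$ charges then controls these integrals exactly as in the proof of Theorem \ref{th:main}: the interaction of a gas particle with $\eta_l$ produces near $z_l$ the integrable singularity $|x-z_l|^{-\beta^2|\eta_l|}$ (this is where $\beta^2\|\eta\|_\infty<d$ is used), whereas the ultraviolet divergences, being created by tight neutral clusters that decouple from the $w_\infty$-weight, coincide with those of $M^{(\beta)}_t$ and are removed by the same counterterm. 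One obtains, for $\beta\in[\beta_{n-1},\beta_n)$, convergence of $\E[e^{\alpha\hat M_t}]\,e^{-\sum_{i=1}^{n-1}\frac{\alpha^{2i}}{(2i)!}\mathcal C^{(\beta)}_{2i}(t)}$, and continuity of the limit in $(\mathbf z,\boldsymbol\eta)$ follows by dominated convergence, all the electrostatic estimates being locally uniform in $\boldsymbol\eta$ and in the $m$-tuple $\mathbf z$ of distinct points; combined with the first two steps this finishes the proof.

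The main obstacle is this last step: extending both the combinatorial cumulant recursion and, above all, the Onsager/electrostatic estimates to configurations containing frozen fractional charges, and checking that all the resulting spatial integrals converge up to — and only up to — the threshold $(1+2\|\eta\|_\infty)\beta^2<2d$ (the same threshold governing the decoupling $\tilde M_t-\hat M_t\to 0$ of the second step, which is where the coefficient $1+2\|\eta\|_\infty$ really originates: placing a gas charge at scale $e^{-t}$ near $z_l$ is forbidden precisely when its interaction energy $\beta^2|\eta_l|$ with $\eta_l$ plus its Wick self-energy $\tfrac{\beta^2}{2}$ stays below the entropic cost $d$ of the $\mu$-mass $\sim e^{-t}$ it occupies).
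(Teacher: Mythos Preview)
Your overall architecture is the same as the paper's: pass to the complex-tilted martingale via Girsanov, then rerun the cumulant machinery with an extended electrostatic estimate that accommodates the frozen fractional charges. The paper's Section \ref{sec:correl} does exactly this, and its Proposition \ref{prop:cumulbis} is precisely the ``main obstacle'' you flag at the end --- the weighted version of \eqref{propz} with the singular factor $e^{\alpha\Upsilon(\bx)}$, $\Upsilon(x)=\max_l(-\log|x-z_l|)$, proved by the same induction.

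Two differences are worth noting. First, your Step~2 (replacing $w_t$ by $w_\infty$) is a detour the paper avoids: it keeps $\psi_t=\beta^2 w_t$ in the phase and works with the $t$-dependent martingale family $(\hat M^{(t)}_s)_{s\le t}$, which fits without modification into the framework of Section~\ref{sec:proofmain} (Proposition~\ref{prop:cumul} already tracks $t$-dependence in $F^{(i,p)}$). Your bound on $\tilde M_t-\hat M_t$ is in fact deterministic, not merely $L^1$, since the randomness sits only in the real part of the cosine argument; this is what makes the step salvageable, but transferring it to the \emph{renormalized} exponential moment still needs a bound on $\E[|e^{\alpha\hat M_t}|]$ against the counterterm, i.e.\ the output of Step~3 --- so Step~2 is not really an independent reduction.

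Second, you assert that the divergent parts of $\hat{\mathcal C}_k(t)$ coincide with those of $\mathcal C^{(\beta)}_k(t)$, hence are killed by the \emph{same} counterterm (same $n$). The paper is more careful: it shows (i) $\hat{\mathcal C}_i(t)-\mathcal C_i(t)$ converges for \emph{every} $i$ (equation \eqref{ladapttr}), and (ii) there exists an $n$ --- in general \emph{larger} than the one from Theorem~\ref{th:main}, depending on $\|\eta\|_\infty$ --- for which \eqref{alprev} holds. The neutral-cluster decoupling you invoke is correct intuition, but making it quantitative requires the refinement \eqref{laproox2}: for $i=2p$ one uses $|\sum_k\lambda^{(p)}_k\psi_t(x_k)|\le C\,m(\bx)/D(\bx)$ together with the Wasserstein bound \eqref{secondestbis} to get integrability, and this is where the threshold $(1+2\|\eta\|_\infty)\beta^2<2$ reappears as the condition $2\alpha+\beta^2<2$ with $\alpha=\beta^2\|\eta\|_\infty$.
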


\begin{rem}
 We expect that the result above remains valid under the less strict  (and likely optimal)  assumption
 $\beta^2 \max(\|\eta\|_{\infty}, 1/2)<1$. However establishing such a result would require some refinement of our technique, which we leave for future work.  
\end{rem}

\subsection{Organization of the remainder of the paper} \label{sub:orga}

 Let us provide a few details concerning the strategy of our proof for the three results displayed above.  In Section \ref{sec:cum}, we present a new probabilistic representation of  the cumulant generating function of a random variable on a filtered probability space,  valid with a large extent of generality and of independent interest\footnote{Note added in proof: the formula relating cumulants with martingale brackets (Lemma \ref{lem:cum}) has recently found a large variety of  applications, including   finance: see \cite{FGR}.}. 
  If one pushes the expansion to $n$-th order, the first $n$ cumulants are given in terms of expectations of brackets of   recursively defined martingales.  This expansion does not present any technical difficulty. 
The technical part of the paper  is Section \ref{sec:proofmain}, in which the above mentioned martingale brackets are computed for the random variable \eqref{IbouBa} (for the martingale approximation setup).
 The computation of the brackets \textit{per se} only requires standard Itô calculus, but it produces multivariate  integrals involving a high number of indices. Our main contribution here is to find a systematic way to control these integrals (see Proposition \ref{prop:cumul}) using a refinement of the electrostatic inequalities (Lemma \ref{simplifbis}) introduced by Onsager \cite{cf:Ons, cf:Jeanpierre} (originally proved for the $3D$ Coulomb potential  but the inequality can also be extended to other positive definite potentials). This is the step where $d=1$ is required: while our electrostatic inequality is valid in any dimension, some technicalities limit the remainder of the argument to the one-dimensional case.

 In Sections \ref{sec:partition}, \ref{sec:charge} and \ref{sec:correl}, we apply and adapt the technical results of Sections \ref{sec:cum} and  \ref{sec:proofmain} to prove our main results concerning the convergence of the Sine-Gordon partition function (Theorem \ref{th:main}), the charge distribution for the $\log$-gas (Theorem \ref{th:mass}) and the  Sine-Gordon correlation functions (Theorem \ref{th:lecorrels}) respectively.

\section{Cumulants of continuous martingales, a general approach}\label{sec:cum}
%%%%%%%%%%%%%%%%%%%%

 In this section we provide a general scheme which allows us to compute the successive cumulants of an arbitrary continuous martingale, or equivalently of any random variable defined on  some probability space equipped with a continuous filtration.  Let us start by recalling the definition of the cumulants of a random variable. If $Z$ is a random variable such that $\bbE[e^{\gep |Z|}]<\infty$ for some $\epsilon>0$, then the $\log$-Laplace transform of $Z$, $\alpha \mapsto \ln\E[ \exp\left(\alpha Z \right) ]$ is analytic in a neighborhood of zero and thus admits a power series expansion of the following form 
 \begin{equation}\label{cumul}
 \ln \bbE[ \exp\left(\alpha Z \right)  ] =\sum_{i=1}^{\infty} \frac{\cC_i(Z)}{i!} \alpha^i.
\end{equation}
 The expression of the coefficients $\cC_i(Z)$ can be obtained by an extensive use of the Taylor formula for $x\mapsto\ln(1+x)$. In particular for $i\ge 1$
\begin{equation}\label{defcumulant}
 \cC_i(Z):= \bbE[ Z^i]+q_{i}\left( \bbE[Z], \bbE[Z^2], \dots, \bbE[Z^{i-1}]\right).
\end{equation}
 where $q_i$ is a polynomial in $i-1$ variables. In full generality, Equation \eqref{defcumulant} defines the $i$-th cumulant of $Z$ under the less strict requirement $\bbE[|Z|^i]<\infty$.

 \medskip
 
 We consider $(M_t)_{t\ge 0}$ a continuous martingale with respect to a filtration $(\cF_t)_{t\ge 0}$ which starts from $\cF_0=\{0,\gO\}$ (this last assumption ensures that $M_0$ is almost surely constant).
We define inductively a sequence of processes $A^{(i)}_t$ and $(M^{(i,t)}_s)_{s\in [0,t]}$ as follows.
First set
\begin{equation}\label{defa1}
A^{(1)}_t=M_t \quad \text{ and } \quad (M^{(1,t)}_s)_{s\in[0,t]}:= \bbE[A^{(1)}_t \ | \cF_s]-\bbE[A^{(1)}_t]=M_s-M_0.
\end{equation}
Then for $i\ge 2$ we define $A^{(i)}$ in terms of the 
quadratic variations of previous order martingales provided that they are well defined
\begin{equation}\label{dafai}
A^{(i)}_t:=  \frac{1}{2}\sum_{j=1}^{i-1} \langle M^{(j,t)},  M^{(i-j,t)}\rangle_t \quad \text{ and } \quad 
 M^{(i,t)}_s:=\bbE[A^{(i)}_t \ | \cF_s]-\bbE[A^{(i)}_t].
\end{equation}
While our result might hold with greater generality,  we assume for simplicity (and because this corresponds to the applications we have in mind) that 
all the quantities above are well defined and
the quadratic variations above are essentially bounded in the sense that 
for every $i$ and $t$  
\begin{equation}\label{bounded}
\| \langle M^{(i,t)} \rangle_t\|_{\infty}<\infty \quad \text{ where } \quad  \| Z\|_{\infty}:=\inf\{ u\ge 0  : \bbP[|Z|>u]=0\}. 
\end{equation}

\begin{lemma}\label{lem:cum}
Given $j \ge 1$, consider the martingale $(N^{(j,\alpha,t)}_s)_{s\in[0,t]}$ defined by
 \begin{equation}\label{defn}
 N^{(j,\alpha,t)}_s:=\sum_{i=1}^j \alpha^i M^{(i,t)}_s.
 \end{equation}
Under the assumption \eqref{bounded}, we have the decomposition
\begin{equation}\label{cumdecomp}
 \log \bbE\left[e^{\alpha M_t}\right]=\sum_{i=1}^j \alpha^i \bbE[A^{(i)}_t]+\log
 \bbE\left[e^{ N^{(j,\alpha,t)}_t-\frac{1}{2}\langle N^{(j,\alpha,t)}\rangle_t}e^{Q^{(j,\alpha)}_t}\right].
\end{equation}
where
\begin{equation} \label{defq}
Q^{(j,\alpha)}_t:= \frac{1}{2}\sum_{i=j+1}^{2j} \alpha^{i} 
  \sum_{k=i-j}^{j}\langle M^{(k,t)},M^{(i-k,t)} \rangle_t .
  \end{equation}
As a consequence,   the $i$-th cumulant of $M_t$ is given by 
\begin{equation}
 \cC_i(M_t):= i! \bbE[A^{(i)}_t].
\end{equation}
\end{lemma}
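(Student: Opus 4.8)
The plan is to reduce everything to a single algebraic identity for the bracket of $N^{(j,\alpha,t)}$, after which \eqref{cumdecomp} drops out by exponentiating and taking logarithms, and the cumulant formula follows from a soft integrability estimate. First I would expand $\langle N^{(j,\alpha,t)}\rangle_t$ from \eqref{defn} bilinearly and regroup the double sum $\sum_{k,l=1}^{j}\alpha^{k+l}\langle M^{(k,t)},M^{(l,t)}\rangle_t$ by total degree $m=k+l$. For $2\le m\le j$ the only active constraints are $1\le k\le m-1$, so the inner sum is $\sum_{k=1}^{m-1}\langle M^{(k,t)},M^{(m-k,t)}\rangle_t=2A^{(m)}_t$ by \eqref{dafai}; for $j+1\le m\le 2j$ the active range is $m-j\le k\le j$, and summing these contributions against $\alpha^{m}$ gives exactly $2Q^{(j,\alpha)}_t$ by \eqref{defq}. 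Hence
\[
\tfrac12\langle N^{(j,\alpha,t)}\rangle_t=\sum_{m=2}^{j}\alpha^{m}A^{(m)}_t+Q^{(j,\alpha)}_t,
\qquad\text{i.e.}\qquad
-\tfrac12\langle N^{(j,\alpha,t)}\rangle_t+Q^{(j,\alpha)}_t=-\sum_{m=2}^{j}\alpha^{m}A^{(m)}_t .
\]

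Next, since every $A^{(i)}_t$ is $\cF_t$-measurable, \eqref{defa1} and \eqref{dafai} give $M^{(i,t)}_t=A^{(i)}_t-\bbE[A^{(i)}_t]$ for all $i\ge1$, whence $N^{(j,\alpha,t)}_t=\sum_{i=1}^{j}\alpha^i\bigl(A^{(i)}_t-\bbE[A^{(i)}_t]\bigr)$. Adding this to the displayed identity produces the crucial cancellation of all the random $A^{(i)}_t$ with $i\ge2$:
\[
N^{(j,\alpha,t)}_t-\tfrac12\langle N^{(j,\alpha,t)}\rangle_t+Q^{(j,\alpha)}_t=\alpha\,(M_t-M_0)-\sum_{i=2}^{j}\alpha^{i}\,\bbE[A^{(i)}_t],
\]
so the left-hand side is $\alpha M_t$ shifted by a deterministic constant. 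Exponentiating, taking $\bbE[\,\cdot\,]$, taking $\log$, and using $M_0=\bbE[M_t]=\bbE[A^{(1)}_t]$ (recall $\cF_0$ is trivial) yields \eqref{cumdecomp} verbatim; this step is purely algebraic.

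For the cumulant identity I would apply \eqref{cumdecomp} with $j$ arbitrary and show that its last summand is $O(\alpha^{j+1})$ as $\alpha\to0$. Write $\cE(N)_s:=\exp\bigl(N_s-\tfrac12\langle N\rangle_s\bigr)$ for $N:=N^{(j,\alpha,t)}$. By \eqref{bounded} together with the Kunita--Watanabe inequality $|\langle M^{(k,t)},M^{(l,t)}\rangle_t|\le\langle M^{(k,t)}\rangle_t^{1/2}\langle M^{(l,t)}\rangle_t^{1/2}$, the bracket $\langle N\rangle_t$ is essentially bounded, with $\|\langle N\rangle_t\|_\infty$ bounded uniformly in $\alpha$ for $|\alpha|\le1$; Novikov's criterion then makes $\cE(N)$ and $\cE(2N)$ true martingales, so $\bbE[\cE(N)_t]=1$ and $\bbE[\cE(N)_t^{2}]=\bbE\bigl[\cE(2N)_t\,e^{\langle N\rangle_t}\bigr]\le e^{\|\langle N\rangle_t\|_\infty}$ stays bounded. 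Also $Q^{(j,\alpha)}_t$ is essentially bounded and, by \eqref{defq}, a polynomial in $\alpha$ of lowest degree $j+1$, so $\|Q^{(j,\alpha)}_t\|_\infty\le C_t\,|\alpha|^{j+1}$ and hence $\|e^{Q^{(j,\alpha)}_t}-1\|_{2}\le C'_t\,|\alpha|^{j+1}$ for $|\alpha|\le1$. Writing $\bbE\bigl[\cE(N)_t\,e^{Q^{(j,\alpha)}_t}\bigr]=1+\bbE\bigl[\cE(N)_t\,(e^{Q^{(j,\alpha)}_t}-1)\bigr]$ and estimating the last expectation by Cauchy--Schwarz with the two bounds above gives $\bigl|\bbE[\cE(N)_t e^{Q^{(j,\alpha)}_t}]-1\bigr|\le C''_t\,|\alpha|^{j+1}$, so the last term in \eqref{cumdecomp} is $\log(1+O(\alpha^{j+1}))=O(\alpha^{j+1})$. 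On the other hand $\|\langle M\rangle_t\|_\infty<\infty$ makes $M_t$ sub-Gaussian, so $\alpha\mapsto\log\bbE[e^{\alpha M_t}]$ is analytic near $0$ with Taylor coefficients $\cC_i(M_t)/i!$ as in \eqref{cumul}. Subtracting the polynomial $\sum_{i=1}^{j}\alpha^i\bbE[A^{(i)}_t]$ and comparing with \eqref{cumdecomp} leaves an analytic function that is $O(\alpha^{j+1})$, so its coefficients of $\alpha^1,\dots,\alpha^j$ vanish, i.e. $\cC_i(M_t)=i!\,\bbE[A^{(i)}_t]$ for $i\le j$. Letting $j\to\infty$ finishes the proof.

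The regrouping of $\langle N^{(j,\alpha,t)}\rangle_t$ is the conceptual heart but is pure bookkeeping; the step that really requires care — and the only place where hypothesis \eqref{bounded} is used — is the last one: exploiting the boundedness of the brackets to upgrade the Dol\'eans--Dade exponentials from local to genuine martingales and to get the $L^2$ estimate identifying the remainder in \eqref{cumdecomp} as $O(\alpha^{j+1})$ uniformly near $\alpha=0$, which is what licenses the term-by-term reading of the cumulants.
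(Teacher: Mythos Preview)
Your proof is correct and follows essentially the same route as the paper: both expand $\langle N^{(j,\alpha,t)}\rangle_t$ bilinearly, regroup by total degree to recognize $2\sum_{m=2}^j\alpha^m A^{(m)}_t+2Q^{(j,\alpha)}_t$, and combine with $N^{(j,\alpha,t)}_t=\sum_i\alpha^i(A^{(i)}_t-\bbE[A^{(i)}_t])$ to obtain the algebraic identity that yields \eqref{cumdecomp}. The only difference is in the remainder estimate: the paper observes directly that once $\cE(N)_t:=e^{N_t-\frac12\langle N\rangle_t}$ is a probability density (which follows from \eqref{bounded} via Novikov), the bound $e^{-\|Q^{(j,\alpha)}_t\|_\infty}\le \bbE[\cE(N)_t\,e^{Q^{(j,\alpha)}_t}]\le e^{\|Q^{(j,\alpha)}_t\|_\infty}$ immediately gives $\bigl|\log\bbE[\cE(N)_t e^{Q^{(j,\alpha)}_t}]\bigr|\le \|Q^{(j,\alpha)}_t\|_\infty\le C\alpha^{j+1}$, which is shorter than your Cauchy--Schwarz/$L^2$ route but yields the same conclusion.
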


\begin{proof}
To illustrate the idea of the proof let us start with the case $j=1$ (which of course could be obtained with a simpler direct computation). We have 
\begin{equation}
 \bbE\left[e^{\alpha M_t}\right]= e^{\alpha \bbE[M_t]}  \bbE\left[e^{\alpha M^{(1,t)}_t}\right]=e^{\alpha \bbE[M_t]}  \bbE\left[e^{\alpha M^{(1,t)}_t-\frac{\alpha^2}{2}\langle M^{(1,t)}\rangle_t}e^{\frac{\alpha^2}{2} \langle M^{(1,t)}\rangle_t}\right].
\end{equation}
Considering $$(\dd \tilde \bbP/\dd \bbP)(\go):=e^{\alpha M^{(1,t)}_t-\frac{\alpha^2}{2}\langle M^{(1,t)}\rangle_t}$$ as probability density we have thus  
\begin{equation}
 \ln  \bbE\left[e^{\alpha M_t}\right]= \alpha \bbE[M_t]+
 \ln\tilde \bbE\left[ e^{\frac{\alpha^2}{2} \langle M^{(1,t)}\rangle_t} \right]. 
\end{equation}
Using our assumption \eqref{bounded}, we obtain that the last term is at most of order $\alpha^2$ and we can conclude that the first cumulant is given by $ \bbE[M_t]$. Our construction of $A^{(i)}$ and $M^{(i,t)}$ has been made so that we can iterate the above process, each $A^{(i)}_t$ being designed to cancel the quadratic variations of terms that have appeared on previous steps. 
    The computation  leading to \eqref{cumdecomp} is the following
 \begin{equation}\begin{split}\label{larecre}
  \langle N^{(j,\alpha,t)} \rangle_t&= \sum_{i=2}^{2j} \alpha^{i} 
  \sum_{k=1}^{i-1}\ind_{\{ \max(k,i-k)\le j\}} \langle M^{(k,t)},M^{(i-k,t)} \rangle_t\\
  &= 2 \sum_{i=2}^{j} \alpha^i A^{(i)}_t+ \sum_{i=j+1}^{2j} \alpha^{i} 
   \sum_{k=i-j}^{j} \langle M^{(k,t)},M^{(i-k,t)} \rangle_t\\
  &= 2 \left[(N^{(j,\alpha,t)}_t-\alpha M_t)+  \sum_{i=1}^{j} \alpha^i \bbE[A^{(i)}_t]  
  + Q^{(j,\alpha)}_t\right].
  \end{split}
 \end{equation}
Interpreting $e^{ N^{(j,\alpha,t)}_t-\frac{1}{2}\langle N^{(j,\alpha,t)} \rangle_t  }$ as a probability density, we can deduce 
from \eqref{larecre} that (recall  \eqref{bounded})
\begin{equation}
  \left|\log \bbE\left[e^{\alpha M_t}\right]-\sum_{i=1}^j \alpha^i \bbE[A^{(i)}_t]\right|
  \le \|Q^{(j,\alpha)}_t\|_{\infty}\le j^2 \alpha^{j+1} \max_{i\le j}\| \langle M^{(i,t)}\rangle_t \|_{\infty}. 
\end{equation}
We conclude using the characterization  of the cumulants \eqref{cumul}.
\end{proof}

\section{Iterative computation of the martingale brackets}\label{sec:proofmain}

Recalling Equation \eqref{th:cv}, an important step for the convergence in the case $F\equiv 1$ is   to show  that for every $n\ge 1$, we have $\lim_{t\to \infty}\cC^{(\gb)}_{2n}(t)$ exists and is finite for $\beta<\beta_{n}$ and that 
$\lim_{t\to \infty}\cC_{2n-1}(t)$ converges for all $\gb < \sqrt{2d}$. Hence our main effort in the proof will consist in proving convergence of these cumulants.
The convergence for general $F$ then mostly follows from the technique developed to control the cumulants.

\subsection{Computing the cumulants}\label{sub:comput}
%%%%%%%%%%%%%%%%%%%%%%%%%%%%%%%%%%%%

In this  section, we present our main technical results concerning the cumulants (Proposition \ref{prop:cumul}) which allows us to ensure their convergence.
We use an inductive approach to find an integral expression for cumulants of the martingale $M^{(\gb)}_t$ \eqref{def:mt} which we denote by $\mathcal{C}_i^{(\beta)}(t)$.

\medskip

To motivate this approach, we compute the first cumulant using the procedure proposed in  Section \ref{sec:cum}.
For practical purposes we state a general formula for the quadratic variation established with basic Itô calculus.
We use it repeatedly throughout our computations. For bounded measurable functions $f$ and $g$ we have
\begin{multline}\label{correlsnh}
\int_{t_1}^{t_2}\int_{I^2}f(X_u(x),u)g(X_u(y),u)\dd \langle  X(x),  X(y) \rangle_u \mu(\dd x)\mu( \dd y) \\
= \int_{t_1}^{t_2} \int_{I^2}f(X_u(x),u)g(X_u(y),u) Q_u(x,y) \mu(\dd x)\mu( \dd y) \dd u.
\end{multline}
Within computations we often omit the dependence in $\beta$ in some notations for the sake of  readability. We set  first $M^{(1,t)}_s=M_s-\mu(I)$ for $s\leq t$. Then It\^o's formula gives
\begin{equation}
\dd M_s= -\int_{I}    \Big( \gb  \sin (\gb X_s(x)) e^{\frac{\gb^2  }{2}K_s(x,x)}\dd X_s(x) \Big)\mu(\dd x).
\end{equation}
By transforming the product of  sines as follows 
$$2 \sin(\gb X_1)\sin(\gb X_2)= \cos \left(\gb (X_1-X_2)\right)- \cos \left(\gb( X_1+X_2)\right),$$
The definition in \eqref{defa1} and Equation \eqref{correlsnh} yield
\begin{multline}\label{compute}
 A^{(2)}_t= \frac{\gb^2}{4} \int^t_0  \int_{I^2} Q_u(x_1,x_2) e^{\frac{\gb^2}{2}(K_u(x_1,x_1)+K_u(x_2,x_2))} \\
 \times \left[ \cos \left(\gb (X_u(x_1)-X_u(x_2))\right)- \cos \left(\gb( X_u(x_1)+X_u(x_2))\right) \right]\mu(\dd x_1)\mu( \dd x_2)\dd u.
\end{multline}
Computing explicitely $A^{(3)}$  is already quite involved (the explicite computation is performed in the appendix to illustrate this fact) and the number of terms for |$i\ge 4$ grows very rapidly even after taking into account the symmetries. In order to attack this problem we are going to look for a generic non explicit expression for   $A^{(i)}_t$ and try to control its growth inductively. Equation \eqref{compute} and iterating suggests that $A^{(i)}_t$ can in general be written in the following form  
\begin{equation}\label{formz}
 A^{(i)}_t=\sum_{p=0}^{\lfloor i/2 \rfloor} \int^t_0 \int_{I^i} F^{(i,p,\gb)}(\bx,u,t)e^{\frac{\gb^2}{2}\sum_{k=1}^iK_u(x_k,x_k)}
 \cos\left[ \gb\left(\sum_{k=1}^{i} \lambda^{(p)}_k X_u(x_k)  \right) \right]
 \mu(\dd \bx) \dd u.
\end{equation}
where we  use the shorthand notation  $\mathbf{x}=(x_1,\dots,x_i)$ and $\mu (\dd \mathbf{x})=\prod_{k=1}^i\mu(dx_k)$. The sequence $(\lambda^{(p)}_k)_k$ is defined by 
\begin{equation}\label{recoil}
 \lambda^{(p)}_k:=-1 \text{ for } k\le p \quad \text{ and }\quad \lambda^{(p)}_k:=1 \text{ for  } k\ge p+1,
\end{equation}
and  ${\bf x}\mapsto F^{(i,p,\gb)}({\bf x},u,t)$ is a continuous function  on $I^i$. We are going to prove \eqref{formz} by induction,
but the most important part of our task is to establish relevant properties for $F^{(i,p,\beta)}$ which imply convergence of the cumulants.
We let $D_i:= \{ \bx\in I^i : \exists k\ne l,  x_k=x_l\}$ denote the set of on-diagonal points in $I^i$.

\begin{proposition}\label{prop:cumul}
For every $i\ge 2$ and $p\leq \lfloor i/2 \rfloor$, $A^{(i)}_t$ can be written in the form \eqref{formz}, where  $F^{(i,p,\gb)}({\bf x},u,t)$ is analytic in $\beta$ and  satisfies:
\begin{enumerate}
  \item  For every ${\bf x}\in I^i\setminus D_i$  there exists a positive constant $C(\bx,\gep)$ 
  such that for every $0<u<t$, $\beta \le \sqrt{2}-\gep$, we have  
  \begin{equation}\label{didness}
  |F^{(i,p,\gb)}({\bf x},u,t)|\le C(\bx,\gep)e^{-u}.
  \end{equation}
The following convergence occurs uniformly on any compact subset of
 $I^i\setminus D_i$ and uniformly in  $\beta \in [0, \sqrt{2}-\gep]$ 
  \begin{equation}\label{limit}
  \lim_{t\to \infty} F^{(i,p,\gb)}({\bf x},u,t)=: \bar F^{(i,p,\gb)}({\bf x},u),
   \end{equation}
   and given $\bx$, the convergence is uniform in $\beta$.
\item  For every $\epsilon>0$, there exists a constant $C=C(i,\gep )>0$ such that for every $u\ge 0$, for any fixed $k$ and
 $x_k\in I$ and 
\begin{equation}\label{propz}
 \int_{I^{i-1}}\sup_{t\geq u}\sup_{\gb\in [0,\sqrt{2}-\gep]}|  F^{(i,p,\beta)}(\bx,u,t)| \mu(\dd \bx^{(k)}) \le Ce^{-(i-1)  u},
\end{equation}
where $\mu(\dd \bx^{(k)}):=\mu(\dd x_1)\dots \mu(\dd x_{k-1}) \mu(\dd x_{k+1}) \dots \mu(\dd x_i)$.

\item  
There exists a martingale  $(\bar M^{(i)}_s)_{s\ge 0}$ satisfying   $\bar M^{(i)}_0=0$
 and for every $s\ge 0$, $\| \bar M^{(i)}_s\|_{\infty}<\infty$ and  $\|\langle \bar M^{(i)}\rangle_s\|_{\infty}<\infty$ (here $\|Z\|_{\infty}=\esssup |Z|$)
such
that for any $s\ge 0$, we have almost surely 
\begin{equation}
 \lim_{t\to \infty}\| M^{(i,t)}_s-\bar M^{(i)}_s\|_{\infty}=0\quad \text{ and } \quad  \lim_{t\to \infty}\|\langle M^{(i,t)}-\bar M^{(i)}\rangle_s\|_{\infty}=0.
\end{equation} 

\end{enumerate}
\end{proposition}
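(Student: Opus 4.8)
\emph{Overall strategy.} The plan is to establish the representation \eqref{formz} together with properties (1)--(3) simultaneously, by induction on $i\ge 2$. The base case $i=2$ is already contained in \eqref{compute}: it exhibits $A^{(2)}_t$ in the form \eqref{formz} with $F^{(2,1,\beta)}(\mathbf{x},u,t)=\tfrac{\beta^2}{4}Q_u(x_1,x_2)$ and $F^{(2,0,\beta)}(\mathbf{x},u,t)=-\tfrac{\beta^2}{4}Q_u(x_1,x_2)$, both analytic in $\beta$ and independent of $t$, so that \eqref{didness}, \eqref{limit} and \eqref{propz} (and (3), which here amounts to the convergence of $\mathbb{E}[\langle M^{(\beta)}\rangle_t\mid\cF_s]-\mathbb{E}[\langle M^{(\beta)}\rangle_t]$) follow directly from the decay of $Q_u$ in Assumption \ref{ass:WN}.

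\emph{The recursion.} For the inductive step one uses \eqref{dafai}, $A^{(i)}_t=\tfrac12\sum_{j=1}^{i-1}\langle M^{(j,t)},M^{(i-j,t)}\rangle_t$, so it is enough to compute each bracket. Starting from the representation \eqref{formz} of $A^{(j)}_t$, a Gaussian conditioning computation (splitting $\int_0^t=\int_0^s+\int_s^t$, using $\mathbb{E}[X_u(x)\mid\cF_s]=X_s(x)$ and the independence of $X_u-X_s$ from $\cF_s$) expresses $M^{(j,t)}_s$ as an integral of $F^{(j,\cdot,\beta)}$ against cosines of the form $\cos\bigl(\beta\sum_k\lambda_k X_{u\wedge s}(x_k)\bigr)$ weighted, on the future part, by the explicit deterministic factor $\exp\bigl(-\tfrac{\beta^2}{2}\sum_{k\neq l}\lambda_k\lambda_l(K_u-K_s)(x_k,x_l)\bigr)$ (the diagonal part combining with $e^{\frac{\beta^2}{2}\sum K_u}$ to give $e^{\frac{\beta^2}{2}\sum K_s}$). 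Applying It\^o's formula and the bracket identity \eqref{correlsnh}, one pairs a $\dd X_v$-increment from each of the two clusters, which inserts a coupling kernel $Q_v(x_a,x_b)$; finally $2\sin A\sin B=\cos(A-B)-\cos(A+B)$ turns the product of sines back into cosines of signed sums of $X_v(x_k)$, which after relabelling the dummy variables are exactly the $\cos(\beta\sum_k\lambda^{(p)}_kX_v(x_k))$ with $0\le p\le\lfloor i/2\rfloor$ from \eqref{recoil}. Integrating $\dd v$ over $[0,t]$ gives \eqref{formz} for $A^{(i)}_t$ together with an explicit recursion: $F^{(i,p,\beta)}(\mathbf{x},v,t)$ is a finite sum (over $j$, the coupled pair, the lower indices $p_1,p_2$ and the relabellings) of terms of the shape
\[
Q_v(x_a,x_b)\Big(\int_v^t F^{(j,p_1,\beta)}(\mathbf{x}',u_1,t)\,e^{-\frac{\beta^2}{2}E_1}\,\dd u_1\Big)\Big(\int_v^t F^{(i-j,p_2,\beta)}(\mathbf{x}'',u_2,t)\,e^{-\frac{\beta^2}{2}E_2}\,\dd u_2\Big),
\]
with $\mathbf{x}'\sqcup\mathbf{x}''=\mathbf{x}$ and $E_1,E_2$ the signed energies of the two sub-clusters against $K_{u_i}-K_v$; analyticity in $\beta$ is preserved at each step (the terms with $j=1$ or $i-j=1$, where $M^{(1,t)}$ carries no time integral, are handled as harmless special cases).

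\emph{Properties (1) and (3).} These are the ``soft'' parts. At a fixed $\mathbf{x}\notin D_i$, Assumption \ref{ass:WN} gives $|Q_v(x_a,x_b)|\le C(\mathbf{x})e^{-(d+1)v}$ and $|E_1|,|E_2|\le C(\mathbf{x})$ uniformly in the times (since $K_{u_i}-K_v\to0$ off-diagonal), so the induction hypothesis \eqref{didness} makes each inner integral $\le C(\mathbf{x})\int_v^t e^{-u_i}\dd u_i\le C(\mathbf{x})e^{-v}$, whence \eqref{didness} for $F^{(i,p,\beta)}$, uniformly in $t$ and in $\beta\in[0,\sqrt2-\epsilon]$; the limit \eqref{limit} then follows by dominated convergence in the inner integrals, uniformity on compacts and in $\beta$ being inherited. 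For (3) one takes $\bar F^{(i,p,\beta)}$ as in \eqref{limit}, defines $\bar M^{(i)}$ by the $t\to\infty$ analogue of the above formula for $M^{(i,t)}_s$, and passes to the limit: the bounds (1)--(2) provide a $\mu(\dd\mathbf{x})\,\dd u$--integrable majorant uniform in $t$ and in $\omega$, which (together with the cancellation carried by the centered cosine, which is precisely what removes the would-be growth on near-diagonal configurations) yields $\|M^{(i,t)}_s-\bar M^{(i)}_s\|_\infty\to0$ and, after one more differentiation and \eqref{correlsnh}, $\|\langle M^{(i,t)}-\bar M^{(i)}\rangle_s\|_\infty\to0$, as well as finiteness of $\|\bar M^{(i)}_s\|_\infty$ and $\|\langle\bar M^{(i)}\rangle_s\|_\infty$; $\bar M^{(i)}_0=0$ since the future part vanishes after centering.

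\emph{Property (2) and the main obstacle.} The off-diagonal pointwise bound does not suffice for \eqref{propz}: one must integrate out $i-1$ of the variables while keeping the factors $e^{-\frac{\beta^2}{2}E_i}$ --- and, when the estimate is later used with the weight $e^{\frac{\beta^2}{2}\sum K_u(x_k,x_k)}$ to form the cumulants, the full multipole self-energy --- under control on the locus where the surviving charges cluster within distance $\lesssim e^{-v}$, which is exactly where like-charge repulsion competes with the $\mu$--volume gain, the competition being won below $\beta^2=2d$. The plan is to feed the induction hypothesis \eqref{propz} for $F^{(j,p_1,\beta)}$ and $F^{(i-j,p_2,\beta)}$ (fixing one variable of each sub-cluster), carry out the two time integrals and the $Q_v$--integration over the coupled variable, and bound the residual energy factors by the refined electrostatic (Onsager-type) inequality of Lemma \ref{simplifbis}, which controls $-\sum_{k\neq l}\lambda_k\lambda_l K(x_k,x_l)$ from above in terms of the Wasserstein distance between the $+$ and $-$ point masses; the output is $\le C(i,\epsilon)e^{-(i-1)v}$, uniformly in $t\ge v$ and $\beta\in[0,\sqrt2-\epsilon]$. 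This is the delicate step: the recursion multiplies both the number of terms and of integration variables at every order, so the estimate must be entirely systematic and uniform in $\beta$ up to the Kosterlitz--Thouless value; and turning the (dimension-free) electrostatic inequality into the clean bound \eqref{propz} relies on one-dimensional features of the cluster geometry --- this is the one place where $d=1$ is genuinely used.
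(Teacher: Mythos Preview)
Your proposal correctly captures the inductive architecture of the paper's proof: the base case $i=2$, the bracket computation via It\^o, the conditional-expectation formula for $M^{(j,t)}_s$, the trigonometric reduction, and the resulting recursion for $F^{(i,p,\beta)}$ all match. Properties (1) and (3) are indeed the soft parts, and your sketch of them is adequate.

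The gap is in property (2), and it is the heart of the argument. You correctly invoke Lemma \ref{simplifbis}, but that lemma does not simply ``bound the residual energy factors'': in the neutral case $i=2p$ (respectively $j=2q$) it produces a \emph{bad} factor $(\min(m(\mathbf{x}_1)e^s,1))^{-\beta^2/2}$ that must be compensated before the induction can close. The paper's key mechanism --- which you do not identify --- is that the coupling kernel itself supplies the compensation. When the sub-cluster $\mathbf{x}_1$ is neutral, one rewrites
\[
\sum_{k=1}^i\sum_{l=i+1}^{i+j}\lambda_k^{(p)}\lambda_l^{(q)}Q_s(x_k,x_l)
= -\sum_{l}\sum_{k=1}^p\lambda_l^{(q)}\bigl(Q_s(x_k,x_l)-Q_s(x_{\sigma(k)+p},x_l)\bigr)
\]
for the optimal pairing $\sigma$ in \eqref{wasse}, and then uses the \emph{derivative} bound in Assumption \ref{ass:WN} to get $|Q_s(x_k,x_l)-Q_s(x_{k+p},x_l)|\le C\,m(\mathbf{x}_1)e^s(1+e^s|x_k-x_l|)^{-2}$ when $m(\mathbf{x}_1)\le e^{-s}$. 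The gained factor $m(\mathbf{x}_1)e^s$ kills the Wasserstein penalty precisely because $\beta^2/2\le 1$; when both sub-clusters are neutral one uses the second-derivative bound on $Q$, gaining $m(\mathbf{x}_1)m(\mathbf{x}_2)e^{2s}$. This is where $d=1$ enters --- not through ``cluster geometry'' but through the inequality $\beta^2\le 2=2d$, which makes one power of $m(\mathbf{x})e^s$ enough. In $d\ge 2$ the relevant regime is $\beta^2\in[d,2d)$ and one power no longer suffices. Without this cancellation, your induction for \eqref{propz} does not close in the neutral case.
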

\begin{remark}
 To make the proof lighter, we do not provide details concerning the uniformity in $\beta\in[0,\sqrt{2-\gep}]$ for the estimates \eqref{didness} to \eqref{propz}. They can be checked via a tedious verification procedure which does not  present any particular technical difficulty.
\end{remark}
The proof the above result (in Section \ref{sub:proof}), 
relies on a technical inequality for our potential function which 
we present now.
This is in the spirit of Onsager's electrostatic inequality 
 (also known as Onsager's Lemma \cite{cf:Ons, cf:Jeanpierre}, see also \cite[Proposition 3.9]{cf:JSW}). The main upgrade when comparing to the previously mentioned inequality, is that we provide a lower bound, not in terms of the minimal inter-particle distance, but in terms of a transport distance between the $+$ and $-$ charges.
For   $\mathbf{x}=(x_1,\dots,x_i)\in I^i$ and $u\geq 0$  and recalling  \eqref{recoil},  we define   $U_{i,p}(\bx,u)$ to the be regularized electric  potential corresponding to the set of charges placed at positions $\bx$. 
 For $s<u$, we define $U_{i,p}(\bx,s,u)$ to be the increment of  $U_{i,p}(\bx,\cdot)$ between $s$ and $u$, that is 
 \begin{equation}\begin{split}\label{minusV}
 U_{i,p}(\bx,u)&:= 2\sum_{1\le k< l \le i} \gl^{(p)}_k \gl^{(p)}_l  \int^u_0  Q_r(x_k,x_l) \dd r\, ,\\
U_{i,p}(\bx,s,u)&:= 2\sum_{1\le k< l \le i} \gl^{(p)}_k \gl^{(p)}_l \int^u_s Q_r(x_k,x_l) \dd r \, .
\end{split}
 \end{equation}
% with the sequence $(\lambda^{(p)}_k)_k$  defined by 
% \begin{equation}\label{recoilbis}
%  \lambda^{(p)}_k=-1 \text{ for } k\le p \quad  \lambda^{(p)}_k=1 \text{ for  } k\ge p+1.
% \end{equation} 
 We aim to establish  a lower bounds for $ U_{i,p}(\bx,s,u)$. 
 Our starting point is the observation that, apart from the missing diagonal terms, $U_{i,p}(\bx,s,u)$ corresponds to the variance  of a random variable, and therefore
 \begin{equation}\label{basicao}
 U_{i,p}(\bx,s,u)\ge -\sum_{k=1}^i \int_{u}^s Q_u(x_k,x_k)\,\dd u  \ge -i (u-s)-C .
 \end{equation}
 To obtain a better bound, we need to obtain 
 a better estimate  on  the above mentioned variance instead of simply relying on positivity.
 In the case $i=2p$,  our bound involves the quantity $m({\bf x})$ defined as the $1$-Wasserstein distance between the distribution of positive charges  $\sum_{k=1}^p \delta_{x_k}$  and that of the negative ones 
$\sum_{l=p+1}^{2p} \delta_{x_l}$.  It is  given by  
\begin{equation}\label{wasse}
m({\bf x})=\min_{\sigma\in \cS_p} \sum_{k=1}^p  |x_k-x_{p+\sigma(k)}|
\end{equation}
 where $\cS_p$ is the symmetric group on $p$ elements. The following result is valid in every dimension $d\ge 1$ for any $Q$ satisfying 
 \eqref{laderive}. 

 \begin{lemma}\label{simplifbis} 
For any choice of integers $i\ge 2$ and $p\le i/2$ 
there exists a constant $C$ only depending on $i$ such that for every $\bx\in I^i$:
\begin{enumerate}
\item  If $i$ is even and $p=i/2$
 \begin{equation}\label{secondestbis}
U_{i,p}(\bx,s,u)\ge -(i-1)(u-s)  -(-\log m({\bf x})-s)_+- C.
 \end{equation}
\item   In all other cases  
 \begin{equation}\label{goodzbis}
U_{i,p}(\bx,s,u) \ge -(i-1)(u-s)
 -C.
 \end{equation}
 \end{enumerate}
\end{lemma}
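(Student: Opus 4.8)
The plan is to recast $U_{i,p}(\bx,s,u)$ as a variance and then improve the trivial positivity bound \eqref{basicao}. Since $K_r=\int_0^rQ_v\,\dd v$ and $\E[X_u(x)X_s(y)]=K_{u\wedge s}(x,y)$, the increment $Z:=X_u-X_s$ is a centered Gaussian field with covariance $G:=K_u-K_s$, so, expanding the square,
\[
U_{i,p}(\bx,s,u)=2\!\!\sum_{1\le k< l \le i}\!\!\gl^{(p)}_k\gl^{(p)}_lG(x_k,x_l)=\Var\Big(\sum_{k=1}^i\gl^{(p)}_kZ(x_k)\Big)-\sum_{k=1}^iG(x_k,x_k).
\]
Assumption \ref{ass:WN} gives $G(x,x)=(u-s)+O_i(1)$ uniformly in $x$, so \eqref{basicao} is just $\Var\ge0$; the lemma amounts to gaining exactly one factor $(u-s)$ on the variance, i.e.\ to showing that $\Var\big(\sum_k\gl^{(p)}_kZ(x_k)\big)\ge (u-s)-(-\log m(\bx)-s)_+-C$ when $i=2p$ and $\ge(u-s)-C$ otherwise, the transport distance $m(\bx)$ of \eqref{wasse} quantifying how much of this gain is eaten by short-range $\pm$ cancellations.

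By a white-noise decomposition (so that $\dd\langle X(x),X(y)\rangle_r=Q_r(x,y)\,\dd r$, cf.\ \eqref{correlsnh}) the variance equals $\int_s^uV_r\,\dd r$ with $V_r:=\sum_{k,l}\gl^{(p)}_k\gl^{(p)}_lQ_r(x_k,x_l)\ge0$, and $\int_s^uQ_r(x,x)\,\dd r=(u-s)+O_i(1)$ uniformly in $x$. Since every $V_r\ge0$ one may discard any range of scales; so it suffices to prove a single-scale estimate $V_r\ge c_iQ_r(x_\star,x_\star)-\mathrm{err}_r$, $c_i>0$, with $\int\mathrm{err}_r\,\dd r=O_i(1)$, valid whenever the configuration is not ``$e^{-r}$-neutral'' --- i.e.\ cannot be partitioned into charge-neutral groups of diameter $O(e^{-r})$. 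The combinatorial point is elementary: such a partition, matched $+$ against $-$ inside each group, would force $m(\bx)\le C_ie^{-r}$, so non-neutrality holds for every $r\ge(-\log m(\bx))\vee0$ up to an $O_i(1)$ shift --- and for all $r$ at once when $i\ne2p$, the total charge being then nonzero. Integrating the single-scale bound over $r\in[(-\log m(\bx))\vee s+O_i(1),\,u]$ and using $\int Q_r(x,x)\,\dd r=(u-s)+O_i(1)$ yields the two claimed inequalities, the regimes $-\log m(\bx)\ge u$ and $-\log m(\bx)\le s$ being handled directly by $\Var\ge0$ and by non-neutrality throughout $[s,u]$, respectively.

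The hard part is the single-scale estimate --- precisely the announced refinement of Onsager's inequality. A naive one-scale mollification does not work: smearing the charges onto balls of radius $\asymp e^{-r}$ is what makes the cross-cluster couplings small (via the decay $|Q_r(x,y)|\le C(1+e^r|x-y|)^{-(d+1)}$ in \eqref{laderive}), but comparing $Q_r$ at two points of one cluster then costs an $O(1)$ error per pair (via $|\partial Q_r|\le Ce^r$), and the $O(i^2)$ such errors swamp the $O(1)$-size diagonal term one is trying to keep. The route I would take is a \emph{hierarchical} mollification: build a dendrogram of the $i$ points by repeatedly merging the closest clusters, with merge radii $\delta_1\le\cdots\le\delta_{i-1}$; smear each charge against the ball attached to its first merge; and use the positive semidefiniteness of $G$ (equivalently of each $Q_r$) together with the near mean-value property of the truncated logarithm $G(x,y)=\log\frac{|x-y|\vee e^{-s}}{|x-y|\vee e^{-u}}+O(1)$. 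The errors then organise by dyadic scale and sum geometrically --- this is where the two-derivative bounds of \eqref{laderive} enter --- the \emph{neutral} merges (the dipoles forming at each level of the tree) being exactly what produces the transport term $(-\log m(\bx)-s)_+$, while the non-cancelling events account for the coefficient $i-1$ in front of $(u-s)$. Carrying out this multiscale bookkeeping while keeping that coefficient sharp is the essential difficulty; nothing in the argument uses $d=1$, consistently with the statement being dimension-free.
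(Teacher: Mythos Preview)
Your overall architecture --- rewrite $U_{i,p}$ as a variance minus diagonal, decompose the variance as $\int_s^u V_r\,\dd r$, and show $V_r\ge 1-\mathrm{err}_r$ with integrable error whenever the configuration is not $e^{-r}$-neutral --- is exactly the paper's, including the combinatorial link between neutrality at scale $r$ and $m(\bx)$.  Where you diverge is in the single-scale step, and there you take an unnecessary detour.

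Your claim that ``a naive one-scale mollification does not work'' rests on the estimate that comparing $Q_r$ at two points of a cluster costs $O(1)$ per pair per scale.  That overcount is the source of your worry.  The actual within-cluster error is $|Q_r(x_k,x_l)-1|\le C\,e^r|x_k-x_l|$, and this is only incurred while $k,l$ are in the same cluster, i.e.\ while $e^r|x_k-x_l|\lesssim 1$.  Hence each pair contributes $\int_0^{-\log|x_k-x_l|}e^r|x_k-x_l|\,\dd r=O(1)$ \emph{in total}, and summing over $O(i^2)$ pairs gives the admissible constant $C(i)$.  So the simple per-scale clustering argument does give $V_r\ge\sum_n(\sum_{k\in U_n}\gep_k)^2-\mathrm{err}_r$ with $\int\mathrm{err}_r\,\dd r=O_i(1)$, and your hierarchical dendrogram is not needed.

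What you are genuinely missing is how to kill the \emph{cross}-cluster couplings: with only the polynomial decay in \eqref{laderive}, $|Q_r(x_k,x_l)|$ is still $O(1)$ when $e^r|x_k-x_l|\asymp 1$, and those terms do not integrate.  The paper handles this by first reducing to a \emph{compactly supported} model kernel $Q(e^r|x-y|)$ with $Q\equiv0$ outside $B(0,1)$: by Assumption~\ref{ass:WN}(2) both kernels have the same truncated-log behaviour $-\ln(|x-y|\vee e^{-t})+O(1)$, so swapping them changes $U_{i,p}(\bx,s,u)$ by at most $O(i^2)$, an admissible constant.  After this reduction cross-cluster covariances vanish identically and the clustering argument goes through cleanly.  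Your hierarchical-mollification sketch is aimed at this same obstacle, but it is not carried out, and the reduction to compact support is both simpler and sufficient.
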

 The proof of this lemma is detailed in Section \ref{sub:onsager} while Proposition \ref{prop:cumul} is proved in Section \ref{sub:proof}.
We end up this section by presenting a consequence of Proposition \ref{prop:cumul} 
and Lemma \ref{simplifbis}. 

\begin{corollary}\label{coro:odd}
For each $t>0$ and $i\in\N$, we have (recall that $d=1$)
\begin{itemize}
 \item $\lim_{t \to \infty }\mathcal{C}^{(\beta)}_{2i-1}(t)=: \cC^{(\beta,\infty)}_{2i-1},$
 uniformly on all compact subsets of $[0,\sqrt{2d})$.
 \item $\lim_{t \to \infty }\mathcal{C}^{(\beta)}_{2i}(t)=: \cC^{(\beta,\infty)}_{2i},$ uniformly on all compact subsets of $[0,\beta_{i})$.
\end{itemize}
The limiting function are continuous in $\beta$.
\end{corollary}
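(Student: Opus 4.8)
The plan is to turn the statement into a dominated-convergence argument for the integral representation of $\bbE[A^{(i)}_t]$ coming from Proposition \ref{prop:cumul}, with Lemma \ref{simplifbis} supplying the integrable majorant. By Lemma \ref{lem:cum}, $\mathcal{C}^{(\beta)}_i(t)=i!\,\bbE[A^{(i)}_t]$, so it is enough to understand $\bbE[A^{(i)}_t]$. Taking the expectation in \eqref{formz} and using that $\sum_k\lambda^{(p)}_kX_u(x_k)$ is a centered Gaussian of variance $\sum_kK_u(x_k,x_k)+U_{i,p}(\bx,u)$, the diagonal terms cancel with the prefactor $e^{\frac{\beta^2}{2}\sum_kK_u(x_k,x_k)}$ (the interchange of $\bbE$ and the integrals being legitimate because, for each finite $t$, everything is absolutely integrable), and one gets
\[
\bbE[A^{(i)}_t]=\sum_{p=0}^{\lfloor i/2\rfloor}\int_0^t\!\!\int_{I^i}F^{(i,p,\beta)}(\bx,u,t)\,e^{-\frac{\beta^2}{2}U_{i,p}(\bx,u)}\,\mu(\dd\bx)\,\dd u.
\]
By Proposition \ref{prop:cumul}(1), for $\bx\notin D_i$ (a null set) the integrand converges as $t\to\infty$ to $\bar F^{(i,p,\beta)}(\bx,u)\,e^{-\frac{\beta^2}{2}U_{i,p}(\bx,u)}$, and for fixed $\bx$ this holds uniformly in $\beta$; so the whole matter is to produce a $t$- and $\beta$-uniform integrable bound on the integrand.

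For every contribution except the balanced one --- i.e.\ for all $p$ when $i$ is odd and for $p<i/2$ when $i$ is even --- Lemma \ref{simplifbis} (applied with $s=0$, case \eqref{goodzbis}) gives $U_{i,p}(\bx,u)\ge-(i-1)u-C$, hence $e^{-\frac{\beta^2}{2}U_{i,p}(\bx,u)}\le Ce^{\frac{\beta^2(i-1)}{2}u}$ uniformly in $\bx$. Integrating \eqref{propz} once more (against $\mu(\dd x_k)$, using $\mu(I)<\infty$) yields $\int_{I^i}\sup_{t\ge u,\,\beta\le\sqrt2-\gep}|F^{(i,p,\beta)}(\bx,u,t)|\,\mu(\dd\bx)\le Ce^{-(i-1)u}$, so such terms are dominated by $Ce^{-(i-1)(1-\beta^2/2)u}$, which is $\dd u$-integrable precisely for $\beta^2<2d$. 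Since $\mathcal{C}^{(\beta)}_{2i-1}(t)$ consists only of such terms (an odd order never produces a balanced $p$), this already gives the first bullet.

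The balanced term $p=i$ of $\bbE[A^{(2i)}_t]$ is the delicate point, and the one where the thresholds $\beta_i$ enter. There Lemma \ref{simplifbis} only gives, via \eqref{secondestbis}, $e^{-\frac{\beta^2}{2}U_{2i,i}(\bx,u)}\le C_\beta\,e^{\frac{\beta^2(2i-1)}{2}u}\max\big(1,m(\bx)^{-\beta^2/2}\big)$, so one must control $\int_{I^{2i}}\sup_{t,\beta}|F^{(2i,i,\beta)}|\,\max(1,m(\bx)^{-\beta^2/2})\,\mu(\dd\bx)$. I would split according to $\{m(\bx)\ge e^{-u}\}$, on which $m(\bx)^{-\beta^2/2}\le e^{\beta^2u/2}$ and \eqref{propz} applies as before, and $\{m(\bx)<e^{-u}\}$, on which I would use the layer-cake identity $m^{-\beta^2/2}\ind_{\{m<e^{-u}\}}=e^{\beta^2u/2}\ind_{\{m<e^{-u}\}}+\tfrac{\beta^2}{2}\int_0^{e^{-u}}r^{-\beta^2/2-1}\ind_{\{m<r\}}\,\dd r$ together with the refinement
\[
\int_{\{m(\bx)<\delta\}}\sup_{t\ge u,\,\beta}|F^{(2i,i,\beta)}(\bx,u,t)|\,\mu(\dd\bx)\le C\,\delta\,e^{-(2i-2)u}\qquad(\delta\le e^{-u}),
\]
which I expect to read off the product-of-$Q_u$ structure of $F^{(2i,i,\beta)}$ established inductively in the proof of Proposition \ref{prop:cumul} (pinning the two arguments of one $Q_u$ factor within distance $\delta<e^{-u}$ replaces one factor $e^{-u}$ in the $\mu$-integral by $\delta$). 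Both pieces then come out $\le Ce^{-(2i-1-\beta^2/2)u}$, so the balanced term is majorized by $C\,e^{(\beta^2i-(2i-1))u}$, which is $\dd u$-integrable on $(0,\infty)$ if and only if $\beta^2<2-\tfrac1i=\beta_i^2$. The remaining terms of $\mathcal{C}^{(\beta)}_{2i}(t)$ being controlled already for $\beta^2<2d$, the cumulant converges for $\beta<\beta_i$.

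With the majorant in hand, dominated convergence gives the pointwise limits $\cC^{(\beta,\infty)}_k$; because the majorant is uniform in $\beta$ on each admissible compact and the integrand converges uniformly in $\beta$ (Proposition \ref{prop:cumul}(1)), one applies dominated convergence to $\sup_\beta|(\text{integrand})_t-(\text{integrand})_\infty|$ to get convergence of $\mathcal{C}^{(\beta)}_k(t)$ uniformly on compact subsets of $[0,\sqrt{2d})$ (resp.\ $[0,\beta_i)$). Continuity of $\beta\mapsto\cC^{(\beta,\infty)}_k$ follows since each $\mathcal{C}^{(\beta)}_k(t)$ is continuous --- indeed analytic, by analyticity of $F^{(i,p,\beta)}$ and dominated convergence --- and a locally uniform limit of continuous functions is continuous. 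The main obstacle is precisely the balanced term: calibrating the Wasserstein singularity $m(\bx)^{-\beta^2/2}$ produced by \eqref{secondestbis} against the decay of $F^{(2i,i,\beta)}$ so that the $\dd u$-integrability threshold lands exactly at $\beta_i$ --- which is exactly what forces the sharpening of Onsager's inequality to a transport-distance estimate.
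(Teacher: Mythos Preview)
Your overall strategy --- dominated convergence on the integral representation of $\bbE[A^{(i)}_t]$, with the majorant supplied by \eqref{propz} together with lower bounds on $U_{i,p}$ --- is exactly the paper's approach, and your treatment of the unbalanced terms ($p\ne i/2$) via \eqref{goodzbis} matches the paper essentially verbatim.

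For the balanced term $p=i$ of $\bbE[A^{(2i)}_t]$, however, you are working harder than necessary and in a direction that leaves a gap. You invoke the refined bound \eqref{secondestbis} involving $m(\bx)$, which forces you to control the singularity $m(\bx)^{-\beta^2/2}$; to close this you appeal to a sub-level-set estimate
\[
\int_{\{m(\bx)<\delta\}}\sup_{t,\beta}|F^{(2i,i,\beta)}(\bx,u,t)|\,\mu(\dd\bx)\le C\,\delta\,e^{-(2i-2)u}
\]
that you ``expect to read off'' the inductive structure of $F$. This estimate is nowhere established in the proof of Proposition~\ref{prop:cumul}: the induction yields only the integrated bound \eqref{propz}, with no control on how mass concentrates on small-$m(\bx)$ regions. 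So as written this step is unproven.

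The paper avoids all of this. For the balanced case it uses not \eqref{secondestbis} but the crude variance-nonnegativity bound \eqref{basicao}, $U_{2i,i}(\bx,u)\ge -2iu-C$, which is \emph{uniform} in $\bx$. Hence $\sup_{\bx}e^{-\frac{\beta^2}{2}U_{2i,i}(\bx,u)}\le Ce^{\beta^2 i u}$, and combined directly with \eqref{propz} (integrated once more in $x_k$) the majorant is $Ce^{(\beta^2 i-(2i-1))u}$, which is $\dd u$-integrable precisely when $\beta^2<2-\tfrac1i=\beta_i^2$. No Wasserstein distance, no layer-cake splitting, no refinement of \eqref{propz} is needed; the threshold $\beta_i$ drops out of the trivial bound \eqref{basicao} alone. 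The sharper Onsager-type inequality \eqref{secondestbis} is genuinely needed in the paper --- but inside the induction step of Proposition~\ref{prop:cumul}, not here.
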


\begin{proof} Consider $n\in\N$  and let us recall Lemma \ref{lem:cum} which gives the relation  $\mathcal{C}_n(t)=n! \E[A^{(n)}_t]$.   We have from \eqref{formz}  (recalling \eqref{minusV}) 
\begin{equation}
\bbE[A^{(n)}_t]=\sum_{p=0}^{\lfloor n/2 \rfloor} \int^t_0 \int_{I^n} F^{(n,p,\gb)}(\bx,u,t)
 e^{-\frac{\gb^2}{2}U_{n,p}(\bx,u)}
 \mu(\dd \bx) \dd u.
\end{equation}
We will now compute the limit $t\to\infty$ of the above quantity by showing for fixed $n$ that
\begin{equation}
\lim_{t \to \infty}\int^t_0 \int_{I^n} F^{(n,p,\beta)}(\bx,u,t)
 e^{-\frac{\gb^2}{2}U_{n,p}(\bx,u)}
 \mu(\dd \bx) \dd u = \int^\infty_0 \int_{I^n} \bar F^{(n,p,\beta)}(\bx,u)
 e^{-\frac{\gb^2}{2}U_{n,p}(\bx,u)}
 \mu(\dd \bx) \dd u
\end{equation}
for every $p\leq \lfloor n/2 \rfloor$ and  for appropriate values of $\beta$. Our argument depends on the situations $p\not=n/2$ or $p=n/2$, this latter situation being possible only for even $n$: 

$\bullet$ Assume $n$ is even, i.e. $n=2i$ for some $i\in\N$,  and $p=i$: \\
In that case, by    using \eqref{propz} and dominated convergence, it is sufficient to show that 
\begin{equation}
 \int^{\infty}_0  e^{-(2i-1)  u}
 \sup_{\bx\in I^{2i} }e^{-\frac{\gb^2}{2}U_{2i,p}(\bx,u)}\dd u <\infty.
\end{equation} 
For this, recall first that  \eqref{basicao}  reads $U_{2i,p}(\bx,u)\ge -2i u-C$ so that the point is then to determine whether the integral $\int^{\infty}_0  e^{-(2i-1)  u}
 e^{\frac{\gb^2}{2}2i u}\dd u$, which obviously leads to the following constraint for $\beta$: $\beta^2< 2\left(1-\frac{1}{2i}\right) =\beta^2_i$ (recall \eqref{seuil}).

$\bullet$ Assume $p\not=n/2$: \\
Note that this case happens automatically whenever $n=2i+1$ is odd, but also when $n$ is even with different number of $\pm$\,-charges. The argument is the same except that we now use  \eqref{secondestbis} in  Lemma \ref{simplifbis}, which reads $U_{n,p}(\bx,u)\ge  -(n-1)u-C$, producing the bound $\int^{\infty}_0  e^{-(n-1)  u}
 e^{\frac{\gb^2}{2}(n-1)u}\dd u$ and the constraint $\beta^2< 2$.
  
\medskip 
In both cases, the continuity in $\beta$ follows from our uniform domination estimates.
\end{proof}

%%%%%%%%%%%%%%%%%%%%%%%%%%%%%%%%%%%%%%%%%%%%
\subsection{ Proof of Lemma \ref{simplifbis} (the Onsager-type inequality)}\label{sub:onsager}
 Recall that in this proof (and only in this proof) the dimension is not restricted to $d=1$. 
Let us first show that we can reduce the proof to the case
where $Q_u(x,y)=Q(e^u|x-y|)$ with $Q$ a $C^2$ positive definite function with support  in the Euclidean ball of radius one centered at the origin $B(0,1)$ and such that $Q(0)=
1$. Indeed we have
 \begin{equation}\label{invara}
 \sup_{t\geq 0} \sup_{x,y\in I}\big|\int_0^t Q(e^u|x-y|)\,du-\ln (|x-y|\vee e^{-t})|<+\infty.
\end{equation}
Therefore, Assumption \ref{ass:WN} Item (2) entails that 
 \begin{equation}
 \sup_{s,t\geq 0} \sup_{x,y\in I}\Big|\int_s^t \big(Q_u(x,y)-Q(e^u|x-y|)\big)\,\dd u\Big|<+\infty,
\end{equation}
and hence that for some constant $C>0$ we have
\begin{equation}
\sum\limits_{1\le k<l\le i} \gep_k \gep_l \int^u_{ s} Q_r(x_k,x_l)\dd r  \geq \sum\limits_{1\le k<l\le i} \gep_k \gep_l \int^u_{ s} Q(e^r|x_k-x_l|)\dd r- i(i-1) C.
\end{equation} 
Then considering $Z^{(r)}$ a centered Gaussian field of covariance $Q(e^r|x-y|)$  and $Y_r:=\sum_{k=1}^i \gep_k Z^{(r)}(x_k)$ we have
$$\Var (Y^{(r)})=i+2\sum_{1\le k<l\le i}  \gep_k \gep_l  Q(e^r|x_k-x_l|).$$
Thus we have in any case
\begin{equation}\label{slot}
2\sum_{1\le k<l\le i} \gep_k \gep_l  Q(e^r|x_k-x_l|)
\ge -i.
\end{equation}
In order to obtain a better estimate, we need a good lower bound on the variance of $Y^{(r)}$. We need to show that there exists $C>0$ such that
\begin{equation}\label{aprouve}
\int^u_s \Var (Y^{(r)})\,dr\ge \begin{cases} (u-s)- C  \quad \quad \quad&\text{when} \sum \gep_k\ne 0,\\
(u-s)- (-\log m({\bf x})-s)_+-C \quad &\text{when} \sum \gep_k= 0.
\end{cases}
\end{equation}
Let us partition $\lint 1,i\rint$ in disjoint groups of indices $(U_n)_{n\ge 1}$  where the mutual distance within group is smaller than $e^{-r}$
and the distance between groups is larger than $e^{-r}$
\begin{equation}\begin{split}
\min_{n\ne q}\min_{k\in U_n, l\in U_q}|x_k-x_l| &\ge e^{-r},\\
\forall n, \quad  \max_{k,l\in U_n}|x_k-x_l| &< e^{-r}.
\end{split}
\end{equation}
\begin{figure}[h] 
\centering
\subfloat[Configuration with existing partition]{\begin{tikzpicture}[xscale=0.75,yscale=0.75] 
\tikzstyle{sommet}=[circle,draw,fill=yellow,scale=0.4] 
\draw[color=red,fill=blue!30,rounded corners=1mm] (0,0) -- (10,0) -- (10,5) -- (0,5) -- cycle;
\node[sommet] (A) at (1,1.5){ $\mathbf{+}$};
\node[sommet] (B) at (8.5,2){ $\mathbf{+}$};
\node[sommet] (C) at (8,1.6){ $\mathbf{+}$};
\node[sommet] (D) at (4.5,3.9){ $\mathbf{+}$};
\node[sommet] (F) at (1.5,2.3){ $\mathbf{-}$};
\node[sommet] (G) at (8.2,2.5){ $\mathbf{-}$};
\node[sommet] (H) at (7.5,2){ $\mathbf{-}$};
\node[sommet] (I) at (2,1.8){ $\mathbf{-}$};
\draw[<->] (9.1,1) -- (9.1,2) node[right]{$e^{-t}$}-- (9.1,3);
\draw[<->] (5.5,4) -- (6.45,3.5) node[above]{$\,\,\, e^{-t}$}-- (7.5,3);
\draw (1.4,1.8) circle (1);
\draw (2,1) node[below]{$U_1$};
\draw (4.5,3.9) circle (0.9);
\draw (4.5,2.9) node[below]{$U_2$};
\draw (8,2) circle (1);
\draw (8,1) node[below]{$U_3$};
\end{tikzpicture}
}
\,\hspace{1cm}\subfloat[Configuration with no possible partition]{\begin{tikzpicture}[xscale=0.75,yscale=0.75] 
\tikzstyle{sommet}=[circle,draw,fill=yellow,scale=0.4] 
\draw[color=red,fill=blue!30,rounded corners=1mm] (0,0) -- (10,0) -- (10,5) -- (0,5) -- cycle;
\node[sommet] (A) at (1,1.5){ $\mathbf{+}$};
\node[sommet] (B) at (4,2){ $\mathbf{+}$};
\node[sommet] (C) at (7.2,1.6){ $\mathbf{+}$};
\node[sommet] (D) at (2.8,2.7){ $\mathbf{+}$};
\node[sommet] (F) at (1.5,2.3){ $\mathbf{-}$};
\node[sommet] (G) at (8.2,2.5){ $\mathbf{-}$};
\node[sommet] (H) at (5,2.3){ $\mathbf{-}$};
\node[sommet] (I) at (6,2.5){ $\mathbf{-}$};
\draw[<->] (4,3) -- (5,3) node[above]{$e^{-t}$}-- (6,3);
\end{tikzpicture}}
\caption{Partition into clusters}
\label{fig:partition}
\end{figure}
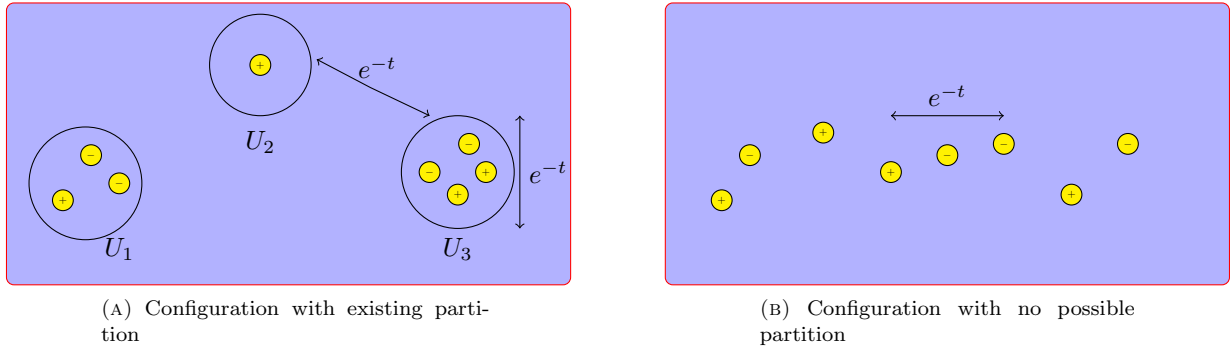
Such a subdivision may not exist but is unique when it does, see Figure \ref{fig:partition}.
A sufficient condition for existence is that $e^{-r}$ is not an approximation of any of the distances between points 
\begin{equation}
\forall k,l \in \lint 1, i\rint, \quad e^{r}|x_k-x_l|\in \R_+ \setminus \left[ \frac{1}{i-1}, 1\right).
\end{equation}
Thus in particular the set of $r$ for which the partition is not defined has Lebesgue measure bounded above by  $\frac{i(i-1)}{2}\log (i-1)$. We let $\cA$ be the set of $r\in \R$ for which the partition exists. 
We write 
\begin{equation}
Y^{(r)}:=\sum_{n\ge 1} Y^{(r)}_n \quad \text{ where } \quad Y^{(r)}_n:=\sum_{k\in U_n} \gep_k Z^{(r)}(x_k).
\end{equation}
As $Q$ is supported on $B(0,1)$  we have  $\Cov(Y^{(r)}_{n_1}, Y^{(r)}_{n_2})=0$ for $n_1\ne n_2$ and
using the fact that $Q$ is Lipschitz on the unit ball, 
\begin{align*}
\Var(Y^{(r)}_n)= &\sum_{k,l\in U_n} \gep_k\gep_l Q(e^r|x_k-x_l|) \\
=& \Big(\sum_{k\in U_n} \gep_k\Big)^2+   \sum_{k,l\in U_n} \gep_k\gep_l \Big(Q(e^r|x_k-x_l|)-1 \Big) \\
\ge& \Big(\sum_{k\in U_n} \gep_k\Big)^2 - C\sum_{k,l\in U_n} e^r|x_k-x_l|.  
\end{align*}
The term  $\sum_{k,l\in U_n} e^r|x_k-x_l| $  can be neglected. More precisely we have  
\begin{equation}
 \int_{\cA} \Big(\sum_n\sum_{k,l\in U_n} e^r|x_k-x_l|\Big)\dd r 
 \le \int_{\bbR_+}\sum_{1\le k,l\le i} e^r|x_k-x_l|\ind_{\{|x_k-x_l|< e^{-r}\}} \dd r \le C  i(i-1) .
\end{equation}
Thus for any value of $u$ and $s$ we have (with a constant that depends on $i$)
\begin{equation}
   \int^u_s \Var (Y^{(r)})\,\dd r \ge \int_{[s,u]\cap \cA} \sum_{n\ge 1} \left(\sum_{k\in U_n} \gep_k\right)^2 \,\dd r-C.
\end{equation}
Now if $\sum \gep_k \ne 0$ we have necessarily a group with non-cancelling signs and thus 
 $\sum_{n\ge 1} \left(\sum_{k\in U_n}\gep_k \right)^2\ge 1,$
 for all $r\in \cA$. This is sufficient to prove the first line of  \eqref{aprouve} 
using the fact that the measure of $[u,s]\setminus \cA$ is bounded by a constant.
 If $\sum \gep_k = 0$, $r\in \cA$ and  $e^{-r}< 2m({\bf x})/i$ then necessarily there must be a group with non zero sign ($\sum_{k\in U_n} \gep_k\ne 0$). Indeed, if not, each group $U_n$ contains the same number of $x_k$'s with positive $\gep_k$ or negative  $\epsilon_k$. We can then pair these $x_k$ inside each $U_n$ so as to obtain $\sigma\in\mathcal{S}_p$ (where $i=2p$) such that
 $$\sum_{k=1}^p|x_k-x_{p+\sigma(k)}|\leq p e^{-r}.$$
 Thus, if all groups have zero sign then  $m({\bf x})\leq i e^{-r}/2$. On the complement of this event,   $\sum_{n\ge 1} \left(\sum_{k\in U_n}\gep_k \right)^2\ge 1,$ from which we deduce  the second line of 
 \eqref{aprouve}.
\qed

 %%%%%%%%%%%%%%%%%%%%%%%%%%%%%%%%%%%%%%%%%%%%%%%%%%%%%%%%%%%%%%%%%%
\subsection{Proof of Proposition \ref{prop:cumul}}\label{sub:proof}

\subsubsection{Presentation of the induction setup}

We  prove the result by induction on $i$ and omit all dependences in $\gb$ in the notation. For $i=2$ and from \eqref{compute}, the functions $F^{(2,0)},F^{(2,1)}$ are given by
$$F^{(i,1)}(x_1,x_2,u,t)=-F^{(i,0)}(x_1,x_2,u,t)= \frac{\gb^2}{4} Q_u(x_1,x_2).$$
In that special case there is no dependence in $t$,  so that  \eqref{limit}   is trivial.  The estimate \eqref{didness} follows from our assumption on $Q_u$, namely Assumption \ref{ass:WN} Item 1.  The inequality  \eqref{propz} is also easy to check.
For the induction step,  we are going to prove that for every $i\ge 3$, 
$j\le \lfloor i/2 \rfloor$, setting 
$Y_p(\mathbf{x},u):=\sum_{k=1}^{i} \gl^{(p)}_k X_{u }(x_k) $
we have
\begin{equation}\label{daform}
\langle M^{(i-j,t)},M^{(j,t)}\rangle_t=\sum_{p=0}^{\lfloor i/2 \rfloor} \int^t_0 \int_{I^i} F^{(i,j,p)}(\bx,u,t)e^{\sum_{k=1}^i\frac{\gb^2}{2}  K_u(x_k,x_k)}  \cos( \gb Y_p(\mathbf{x},u) ) \mu(\dd \bx) \, \dd u,
\end{equation}
for a function $F^{(i,j,p)}$ satisfying  \eqref{didness}-\eqref{propz}. The statement for $A^{(i)}_t$ immediately follows.
The first task is to obtain an expression for $F^{(i,j,p)}(\bx,u,t)$ in terms of the function obtained in previous iterations. We set $V_p(\mathbf{x},u):=\E\Big[Y_p(\mathbf{x},u)^2\Big]$. As a consequence of our assumption \eqref{formz}, we can write 
$M^{(i-j,t)}$ as
 \begin{multline}\label{defmk}
M^{(i-j,t)}_s:=\sum_{p=0}^{\lfloor (i-j)/2 \rfloor} \int^t_0 \int_{I^{i-j}} F^{(i-j,p)}(\bx,u,t) e^{ \frac{\beta^2}{2}\left(\sum_{k=1}^{i-j}K_u(x_k,x_k)-V_p(\mathbf{x},u)\right)}\\
\times \Big(
  \cos\left( \gb Y_p(\mathbf{x},u\wedge s) \right)e^{\frac{\beta^2}{2}V_p(\mathbf{x},u\wedge s) }-  1     \Big) 
 \mu(\dd \bx ) \dd u.
\end{multline}
The martingale $M^{(j,t)}_s$ can be written as a similar sum over $q\in \lint  0,\lfloor j/2 \rfloor \rint$ and thus 
$\langle M^{(i-j,t)},M^{(j,t)}\rangle_t$ is the sum over $p$ and $q$  of the quadratic variations generated by the corresponding terms in the sum. We are going to prove that each of these quadratic variations can be written as \eqref{daform} with  functions that satisfy the 
properties specified in Proposition \ref{prop:cumul}.

\medskip

In what follows, for the sake of readability, we make a change of variables and replace the pair $(i-j,j)$ with $(i,j)$ (hence  $i+j$ now has the role formerly played by $i$). From now on, we also use the notation  $\mathbf{x}_1=(x_1,\dots,x_i)$, $\mathbf{x}_2=(x_{i+1},\dots,x_{i+j})$ and $\bx=(\bx_1,\bx_2)$. 
 We let  
$U_{i,p}(\bx_1,u,v)$  and $U_{j,q}(\bx_2,u,v)$ be defined as in \eqref{minusV}, more precisely we have  
\begin{equation}\label{defU}
U_{j,q}(\bx_2,u,v):=2 \sum\limits_{1\le k<l \le j}\gl^{(q)}_k\gl^{(q)}_l \int_{u}^{v} Q_r(x_{k+i},x_{i+l})\dd r,  \quad \text{ and } \quad U_{j,q}(\bx_{2},v):=U_{j,q}(\bx_{2},0,v).
\end{equation}
Expanding all products, our task is reduced to the study of the bracket of martingales of the following type
\begin{equation}\begin{split}\label{saltype}
 W_s&= \int^t_0 \int_{I^i}   F(\mathbf{x}_1,u,t) e^{ - \frac{\gb^2}{2} U_{i,p}(\bx_1,u)}  \Big(
  \cos\left( \gb Y_p(\mathbf{x}_1,u\wedge s) \right)e^{\frac{\beta^2}{2}V_p(\mathbf{x}_1,u\wedge s)}-  1    \Big)   
 \mu (\dd \bx_1) \dd u,  \\
R_s&= \int^t_0 \int_{I^j}   G(\mathbf{x}_2,u,t) e^{ - \frac{\gb^2}{2} U_{j,q}(\bx_2,u)} \Big(
  \cos\left( \gb Y_q(\mathbf{x}_2,u\wedge s) \right)e^{\frac{\beta^2}{2}V_q(\mathbf{x}_2,u\wedge s)}-   1   \Big)  
 \mu (\dd \mathbf{x}_2)  \dd u  ,
\end{split}
\end{equation}
where $i,j\ge 2$ and functions $F$ and $G$ both satisfy \eqref{didness}-\eqref{propz} for $i$ and $j$ respectively. Note that, to cover  the case $j=1$, we   also need
to study the bracket $\langle W, M^{(1)} \rangle_t$, but this is comparatively simpler (the case is treated starting from Equation \eqref{isier}).
The bracket can be computed  using \eqref{correlsnh}  
\begin{multline}\label{tam}
\dd \langle W,R\rangle_s= \int_{I^{i+j}} \gb^2 \left(\sum_{k=1}^i \sum_{l=i+1}^{i+j} \gl^{(p)}_k \gl^{(q)}_l Q_s(x_k,x_l)\right) \left(\int^t_s  F(\mathbf{x}_1,u,t) e^{ - \frac{\gb^2}{2} U_{i,p}(\bx_1,s,u)} \dd u\right)\\
\left(\int^t_s G(\mathbf{x}_2,v,t)  e^{ - \frac{\gb^2}{2} U_{j,q}(\bx_1,s,v)}\dd v \right) e^{\frac{\gb^2}{2}\sum_{k=1}^{i+j}K_s(x_k,x_k)} \\
\times
 \sin\Big( \gb Y_p(\mathbf{x}_1, s) \Big)
\sin\Big( \gb Y_q(\mathbf{x}_2, s) \Big)\mu (\dd \mathbf{x}_1)\mu(\dd \mathbf{x}_2)\dd s.
\end{multline}
Now we use trigonometry to change the product of sines  into a sum
which, after permutation of the coordinates, can be rewritten as the sum of two terms of the form
$\cos (\gb Y_{r}(\mathbf{x}, s))$, one term with $r=p+q$ and the other with $r=\min(p+j-q, q+i-p)$.
It remains to show  that the function obtained in front of those terms, namely 
 (we omit sign and a factor $\gb^2/2$ for obvious reasons)
\begin{multline}\label{defH}
 H(\mathbf{x}_1,\mathbf{x}_2,s,t):=  \left(\sum_{k=1}^i \sum_{l=i+1}^{i+j} \gl^{(p)}_k \gl^{(q)}_l Q_s(x_k,x_l)\right)  \left(\int^t_s  F(\mathbf{x}_1,u,t) e^{ - \frac{\gb^2}{2} U_{i,p}(\bx_1,s,u)} \dd u\right)\\
 \times \left(\int^t_s G(\mathbf{x}_2,v,t)  e^{-\frac{\gb^2}{2} U_{i,q}(\bx_2,s,v)}\dd v \right),
\end{multline}
satisfies \eqref{didness},\eqref{limit} and \eqref{propz}.

\subsubsection{ Proof of \eqref{didness} and \eqref{limit}}
This is the easier part.
It is sufficient to prove that 
\begin{equation}\label{leetruc}
\int^t_s  |F(\mathbf{x}_1,u,t) |e^{ - \frac{\gb^2}{2} U_{i,p}(\bx_1,s,u)} \dd u \le C(\bx_1)e^{-s} 
\end{equation}
and  converges (because this also implies convergence without $|\cdot|$ ) when $t$ tends to infinity (and the same for the term containing $G$).
This is obtained by observing that for $\bx_1$ fixed, $U_{i,p}(\bx_1,s,u)$ is uniformly bounded in $u$ and $s$ (e.g. by
$2 \sum_{1\le k<l \le i}\int_{0}^{\infty} |Q_r(x_k,x_l)|\dd r$). 
Hence using Assumption \eqref{didness} for $F$ and dominated convergence, we prove both convergence and \eqref{leetruc}.
 \qed
 
 \subsubsection{Proof of \eqref{propz}}

 For this statement we are going to rely on Lemma \ref{simplifbis}.
We replace $F(\mathbf{x}_1,u,t)$ and $G(\mathbf{x}_2,v,t)$ respectively by $F(\mathbf{x}_1,u):=\sup_{t:t\geq u}|F(\mathbf{x}_1,u,t)|$ and $G(\mathbf{x}_2,v):=\sup_{t:t\geq v}|G(\mathbf{x}_2,v,t)|$ and replace $t$ by $\infty$ in the integral bounds in \eqref{defH}. Of course, if we can prove appropriate bounds for the resulting quantity, this is enough for our claim.
We start with \eqref{propz},
and the easier case, when $p\ne i/2$ nor $q\ne j/2$. In that case, using \eqref{goodzbis},  we can conclude if we  show that there exists a constant such that for any $k\in \lint 1,i\rint$ and $l\in \lint i+1,i+j \rint$ and $N\in \lint 1,i+j\rint$ (not necessarily distinct of $k$ or $l$)  and any $x_N\in I$ 
%\red{(in the next few computations the letter $m$ is used both as an integer index, and  as the Wasserstein distance defined in  \eqref{wasse}, this should bring no confusion since the latter is always written $m(\bx)$)}
\begin{equation}\label{defH0}
 \int_{I^{i+j-1}} \!\!\!\!\!\!\!\!\! |Q_s(x_k,x_l)| \! \left[\int^\infty_s\ \!\!\!\!  F(\mathbf{x}_1,u) e^{\frac{\gb^2}{2}(i-1)(u-s)} \dd u  \right]\!\!
 \left[\int^\infty_s \!\!\!\! G(\mathbf{x}_2,v)  e^{\frac{\gb^2}{2}(j-1)(v-s)}\dd v  \right]\!\! \mu(\dd \bx^{(N)})\le C e^{-(i+j-1)s}.
\end{equation}
Now assume without loss of generality that $N\ge (i+1)$ (the two integrals have symmetric roles).
Then we have from our induction hypothesis (for the first and last inequality) and Assumption \eqref{ass:WN} (for the second inequality) for every $x_k$, $x_l$ and $x_N$ respectively 
\begin{equation}
\begin{split}\label{bigthree}
\int_{I^{i-1}} F(\mathbf{x}_1,u) \mu(\dd \bx^{(k)}_1)&\le C_1 e^{-(i-1)u},\\
   \int_{I} |Q_s(x_k,x_l)| \mu(\dd x_k) &\le C_2e^{-s},\\
  \int_{I^{j-1}} G(\mathbf{x}_2,v) \mu(\dd\bx^{(N)}_2) &\le C_3 e^{-(j-1)v}.
  \end{split}
 \end{equation}
Combining the three inequalities, we get \eqref{defH0} by using  Fubini and integrating with respect to $\bx^{(k)}_1,x_k,\bx^{(N)}_2$ (in that order). Therefore, we obtain that the l.h.s. of \eqref{defH} is smaller than
\begin{equation}
 C_4 e^{-s} \int^\infty_s \int^\infty_s  e^{-(i-1)u} e^{-(j-1)v}\dd u \dd v\le C_5 e^{-(i+j-1)s}.
\end{equation}
 We can now move to the proof of \eqref{propz} when $i=2p$ or $j=2q$  (or both).
Let us start with the case where  $i=2p$, $j\ne 2q$ first.
We have 
\begin{equation}\label{resum}
 \left(\sum_{k=1}^i \sum_{l=i+1}^{i+j} \gl^{(p)}_k \gl^{(q)}_l Q_s(x_k,x_l)\right) 
 = -\sum_{l=i+1}^{i+j}\sum_{k=1}^p \gl^{(q)}_l \left( Q_s(x_{k},x_l)-Q_s(x_{\sigma(k)+p},x_l)\right),
\end{equation}
where $\sigma$ can be chosen as the permutation for which the minimum in \eqref{wasse} is attained. By symmetry we can assume that $\sigma$ is the identity, meaning that all our integrals will be restricted to the set $A_{\mathrm{Id}}:=\{ \bx_1 \ : \ \sigma=\mathrm{Id} \}$. This implies in particular that 
\begin{equation}
\forall k\in \lint 1, p \rint, \quad |x_{k}-x_{k+p}|\le m(\mathbf{x}_1).
\end{equation}
Using the triangle inequality and Lemma \ref{simplifbis}
we can conclude provided we prove a uniform bound 
\begin{multline}\label{defH2}
 \int_{I^{i+j-1}} \frac{|Q_s(x_k,x_l)- Q_s(x_{k+p},x_l)|}{ \min\left(m(\mathbf{x}_1)e^s,1\right)^{\frac{\gb^2}{2}}} \ind_{\{x_1\in A_{\mathrm{Id}}\}}\\
 \times\left[\int^\infty_s   F(\mathbf{x}_1,u) e^{\frac{\gb^2}{2}(i-1)(u-s)} \dd u\right]
 \left[\int^\infty_s G(\mathbf{x}_2,v)  e^{\frac{\gb^2}{2}(i-1)(v-s)}\dd v \right] \mu(\dd \bx^{(N)})\le C e^{-(i+j-1)s},
\end{multline}
valid uniformly in the choice of $k$, $l$ and $N$.
Now we claim that 
\begin{equation}\label{bornladeriv}
 |Q_s(x_k,x_l)- Q_s(x_{k+p},x_l)|\le \begin{cases}   C[(1+ e^s|x_k-x_l|)^{-2}+ (1+ e^s|x_{k+p}-x_l|)^{-2}] &\text{if } m(\bx_1)\ge  e^{-s}, \\
                                        C   m(\bx_1)e^{s}(1+ e^s|x_k-x_l|)^{-2}   \quad &\text{if }    m(\bx_1)\le e^{-s}.
                            \end{cases}
\end{equation}
Both inequalities comes from our Assumption \eqref{laderive}, the first line coming from the control on $Q$ and the second from the control on its derivative 
\begin{equation*} 
|Q_s(x_k,x_l)- Q_s(x_{k+p},x_l)|\le m(\bx_1)\max_{a\in[x_k,x_{k+p}]} |\partial_x  Q_s(a,x_l)|\le C m(\bx_1)e^s (1+ e^s |x_k-x_l|)^{-2} 
\end{equation*}
and noticing that if $m(\bx_1)\le e^{-s}$ then  $\max_{a\in[x_k,x_{k+p}]}  e^s (1+ e^s |a-x_l|)^{-2}  $   is of the same order as $e^s (1+ e^s |x_k-x_l|)^{-2}$. Because $\gb^2\le 2$ and for $m(\bx_1)\le e^{-s}$, the contribution of the term $ m(\bx_1)  e^{s}$ coming from \eqref{bornladeriv} cancels out the one appearing in  
\eqref{defH2}. 

\medskip

This is the part of the computation where the assumption  $d=1$ is crucially needed. Of course for $d\ge 2$ this cancelation also occurs for $\beta^2\le 2$, but the regime of interest is in that case $\beta^2\in [d,2d)$.
As a consequence of the above considerations,  \eqref{defH2} is proved if we can show that
\begin{multline}\label{defH2prim}
 \int_{I^{i+j-1}} (1+e^s |x_k-x_l|)^{-2}  \left[\int^\infty_s   F(\mathbf{x}_1,u) e^{\frac{\gb^2}{2}(i-1)(u-s)} \dd u\right]\\
 \times
 \left[\int^\infty_s G(\mathbf{x}_2,v)  e^{\frac{\gb^2}{2}(i-1)(v-s)}\dd v \right] \mu(\dd \bx^{(N)})\le C e^{-(i+j-1)s},
\end{multline}
which can be done exactly like \eqref{defH0} by observing that $\int_I(1+e^s |x_k-x_l|)^{-2}\mu(\dd x_k)\le C e^{-s}.$

\medskip

\noindent Finally let us consider the case $i=2p$, $j=2q$. 
Again by symmetry, we can assume that both permutations involved in the definition of $m(\bx_1)$ and $m(\bx_2)$ are the identity, which ensures that
\begin{equation}\label{smalldist}
\forall k,l, \ |x_{k}-x_{k+p}|\le m(\mathbf{x}_1), \quad |x_{l}-x_{l+q}|\le m(\mathbf{x}_2) .
\end{equation}
In that case similarly to \eqref{resum} 
\begin{equation}\label{resum2}
 \left(\sum_{k=1}^i \sum_{l=i+1}^{i+j} \gl^{(p)}_k \gl^{(q)}_l Q_s(x_k,x_l)\right) 
 = \sum_{l=i+1}^{i+q}\sum_{k=1}^p \left( Q_s(x_{k},x_l)-Q_s(x_{k+p},x_l)-
 Q_s(x_{k},x_{l+q})+ Q_s(x_{k+p},x_{l+q})\right).
\end{equation}
Hence  the inequality we have to prove becomes (with $B_{\mathrm{Id}}:=\{ \bx_2 \ : \ \sigma=\mathrm{Id} \}$)
\begin{multline}\label{XXX2}
\int_{I^{i+j-1}}\frac{| Q_s(x_{k},x_l)-Q_s(x_{k+p},x_l)-
 Q_s(x_{k},x_{l+q})+ Q_s(x_{k+p},x_{l+q})|}{  \left(\min\left(m(\mathbf{x}_1)e^s,1\right) \min\left(m(\mathbf{x}_2)e^s,1\right)\right)^{\gb^2/2}} \ind_{\{\bx_1\in A_{\mathrm{Id}}\ ; \  \bx_2 \in B_{\mathrm {Id}} \}}   \\ \times
  \left(\int^\infty_s  F(\mathbf{x}_1,u)   e^{-\frac{\gb^2(i-1)(u-s)}{2}}  \dd u\right)
 \left(\int^\infty_s G(\mathbf{x}_2,v)   e^{-\frac{\gb^2(j-1)(v-s)}{2}}\dd v \right)\mu(\dd \bx^{(N)}) \le Ce^{-(i+j-1)s}
\end{multline}
and has to be valid for every choice of $k$, $l$, and $N$.
 Here again we can conclude provided that we can replace the fraction by  $C(1+e^s |x_k-x_l|)^{-2}$.
 Hence we need to
 show that the numerator in the fraction is smaller than $ C m(\mathbf{x}_1)  m(\mathbf{x}_2) (1+e^s |x_k-x_l|)^{-2} $ when both 
 $m(\mathbf{x}_1)\le e^{-s}$ and $m(\mathbf{x}_2)\le e^{-s}$ 
 (other cases are similar but simpler).
 In this case, elementary differential calculus and our bound on the second derivative 
 \eqref{laderive} show  that the numerator in the fraction above is smaller than   
\begin{multline}
| Q_s(x_{k},x_l)-Q_s(x_{k+p},x_l)-
 Q_s(x_{k},x_{l+q})+ Q_s(x_{k+p},x_{l+q})|\\
\le C m(\mathbf{x}_1)  m(\mathbf{x}_2) e^{2s}\maxtwo{a\in[x_{k}, x_{k+p}]}{b\in [x_{l}, x_{l+q}]} C(1+e^{2s}|b-a|)^{-2}\le
 C m(\mathbf{x}_1)  m(\mathbf{x}_2) (1+e^{s}|x_k-x_l|)^{-2},
\end{multline}
where the last inequality uses our assumption on $m(\mathbf{x}_1)$ and  $m(\mathbf{x}_2)$ as well as \eqref{resum2}. This concludes our proof of \eqref{propz} except from the  case $j=1$  left aside.

In the case $j=1$ we need to compute a bracket of the type $\langle W, M^{(1)}\rangle_t$ 
where $W$ is as above and $M^{(1)}_t$ is our original martingale (minus its mean).
We have 
\begin{multline}\label{isier}
\dd \langle W,M^{(1)}\rangle_s= \int_{I^{i+1}} \gb^2 \left(\sum_{k=1}^i   \gl^{(p)}_k Q_s(x_k,x_{i+1})\right) \left(\int^t_s  F(\mathbf{x}_1, u,t) e^{-\frac{\gb^2}{2} U_{i,p}(\bx_1,s,u)} \dd u\right)\\
 e^{\frac{\gb^2 }{2}\sum_{k=1}^{i+1}K_s(x_k,x_k)} 
 \sin\left( \gb Y_p(\bx_1,s)\right)
\sin\left( \gb X_{s}(x_{i+1})\right) \mu(\dd  \mathbf{x}_1)\mu(\dd x_{i+1})\dd s.
\end{multline}
Using the notation $\bx=(\bx_1,x_{i+1})$,
the function for which convergence and domination have to be proved is 
\begin{equation}\label{defHdeuz}
 H(\bx,s,t):= \left(\sum_{k=1}^i  \gl^{(p)}_k  Q_s(x_k,x_{i+1})\right)\left( \int^t_s  F(\mathbf{x}_1,u,t) e^{- \frac{\gb^2}{2} U_{i,p}(\mathbf{x}_1,s,u)} \dd u\right).
\end{equation}
The proof of \eqref{didness} and \eqref{limit} for this $H$ works as previously.
Now concerning \eqref{propz}, we prove the bound for 
\begin{equation}\label{defHHH}
 H(\bx,s)= \left|\sum_{k=1}^i  \gl^{(p)}_k  Q_s(x_k,x_{i+1})\right|\left( \int^{\infty}_s  F(\mathbf{x}_1,u) e^{- \frac{\gb^2}{2} U_{i,p}(\mathbf{x}_1,s,u)} \dd u\right).
\end{equation}
with $F(\mathbf{x}_1,u):=\sup_{t\ge u} F(\mathbf{x}_1,u,t)$. The proof is the same as for the $j\ge 2$ case.
\qed 
\subsubsection{Proof of Item (3)} For this, the starting point is again to observe that the validity of \eqref{formz} entails that each martingale $M^{(i,t)}$ takes the form \eqref{defmk}, and thus 
 the candidate for the limit  is the martingale 

 \begin{equation}\label{defmk2}
\bar M^{(i)}_s:=\sum_{p=0}^{\lfloor (i)/2 \rfloor} \int^\infty_0 \int_{I^{i}} \bar F^{(i,p)}(\bx,u) e^{ -\frac{\beta^2}{2}U_{i,p}(\mathbf{x},u)}\Big(
  \cos\left( \gb Y_p(\mathbf{x},u\wedge s) \right)e^{\frac{\beta^2}{2}V_p(\mathbf{x},u\wedge s) }-  1     \Big) 
 \mu(\dd \bx ) \dd u.
\end{equation}
 We are going to prove the convergence statement for the bracket, the other one being only simpler. 
The  bracket derivative $\partial_s \langle \bar M^{(i)}-\bar M^{(i,t)} \rangle$ is given by
a sum of terms of the form (recall \eqref{tam}, we adopt the convention $F^{(i,p)}(\bx,u,t)=0$ for $u>t$)
\begin{multline}\label{tam2}
\int_{I^{2i}} \left(\sum_{k=1}^i \sum_{l=i+1}^{2i} \gl^{(p)}_k \gl^{(q)}_l Q_s(x_k,x_l)\right) \left(\int^\infty_s  (\bar  F^{(i,p)}(\bx_1,u)-   F^{(i,p)}(\bx_1,u,t)) e^{ - \frac{\gb^2}{2} U_{i,p}(\bx_1,s,u)} \dd u\right)\\
\left(\int^\infty_s (\bar  F^{(i,q)}(\bx_2,u)-   F^{(i,q)}(\bx_2,u,t)) e^{ - \frac{\gb^2}{2} U_{i,q}(\bx_2,s,u)} \dd u\right) e^{\frac{\gb^2}{2}\sum_{k=1}^{i+j}K_s(x_k,x_k)} \\
\times
 \sin\Big( \gb Y_p(\mathbf{x}_1, s) \Big)
\sin\Big( \gb Y_q(\mathbf{x}_2, s) \Big)\mu (\dd \mathbf{x}_1)\mu(\dd \mathbf{x}_2).
\end{multline}
We show that each term converges to zero and remains uniformly bounded for  $s\in[0,K]$. By  dominated convergence it is sufficient to show that (recall $F^{(i,p)}(\bx,u):= \sup_{t>u}|F^{(i,p)}(\bx,u,t)|$)
\begin{multline}
\int_{I^{2i}} \left(\sum_{k=1}^i \sum_{l=i+1}^{2i} \gl^{(p)}_k \gl^{(q)}_l Q_s(x_k,x_l)\right) \left(\int^\infty_s    F^{(i,p)}(\bx,u) e^{ - \frac{\gb^2}{2} U_{i,p}(\bx_1,s,u)} \dd u\right)\\
\left(\int^\infty_s F^{(i,q)}(\bx,u) e^{ - \frac{\gb^2}{2} U_{i,p}(\bx_1,s,u)} \dd u\right)\mu (\dd \mathbf{x}_1)\mu(\dd \mathbf{x}_2)<\infty.
\end{multline}
This can be performed by repeating the computation of the induction step in the proof of \eqref{propz}.\qed

\section{Convergence of the partition function: Proof of Theorem \ref{th:main}}\label{sec:partition}
%%%%%%%%%%%%%%
We fix $n$ and $\beta^2 <2-\frac{1}{n}$. Let us consider some bounded measurable functional $f$ which we further assume to be measurable with respect to $\mathcal{F}_{s_0}$ for some $s_0>0$. Using Lemma \ref{lem:cum} we have

\begin{align*}
\mathcal{Z}^\beta_t(f):=\E[f e^{\alpha M_t}]e^{-\sum_{i=1}^{2n-1}\frac{\alpha^i}{i!} \mathcal{C}_i(M_t)}=\E\left[f e^{N^{(2n-1,\alpha,t)}_t-\frac{1}{2}\langle N^{(2n-1,\alpha,t)}\rangle_t} e^{ Q^{(2n-1,\alpha)}_t}\right]
 \end{align*}
 where $N^{(2n-1,\alpha,t)}_t$ and   $ Q^{(2n-1,\alpha)}_t$ are defined by \eqref{defn} and \eqref{defq}. Let us define for $s\leq t$
\begin{equation}\label{qnalphabis}
 Q^{(2n-1,\alpha)}_{s,t}:= \frac{1}{2}\sum_{k=2n}^{4n-2} \alpha^{k} 
   \sum_{l=k-2n+1}^{2n-1}\langle M^{(l,t)},M^{(k-l,t)} \rangle_s 
 \end{equation}
where $ Q^{(2n-1,\alpha)}_{t,t}= Q^{(2n-1,\alpha)}_{t}$. We can assume that $f\geq 1$  by adding a large constant to it. 
Now, because of our assumption on $f$ of $\mathcal{F}_{s_0}$-measurability and the martingale property of $(N^{(2n-1,\alpha,t)}_s)_{s\leq t }$, we have for $s_0\leq s \leq t$
\begin{equation}\label{wopop}
 \ln \left|\frac{\E\left[f e^{N^{(2n-1,\alpha,t)}_t-\frac{1}{2}\langle N^{(2n-1,\alpha,t)}\rangle_t} e^{ Q^{(2n-1,\alpha)}_{t,t}}\right]}{\E\left[f e^{N^{(2n-1,\alpha,t)}_s-\frac{1}{2}\langle N^{(2n-1,\alpha,t)}\rangle_s} e^{ Q^{(2n-1,\alpha)}_{s,t}}\right]}\right|
  \le  \| Q^{(2n-1,\alpha)}_{t,t}- Q^{(2n-1,\alpha)}_{s,t}\|_{\infty}.
\end{equation}
%Note also that in the numerator above, using the martingale property and the $\cF_s$ measurability of $F$ and $Q^{(n,\alpha)}_s$, $N^{(n,\alpha,t)}_t-\frac{1}{2}\langle N^{(n,\alpha,t)}\rangle_t$ can be replaced by $N^{(n,\alpha,t)}_s-\frac{1}{2}\langle N^{(n,\alpha,t)}\rangle_s$.
Hence we can rewrite 
\begin{equation}\label{okokok}
\ln \E[f e^{\alpha M_t}]-\sum_{i=1}^{2n-1}\frac{\alpha^i}{i!} \mathcal{C}_i(M_t)= \ln \E\left[f e^{N^{(2n-1,\alpha,t)}_s-\frac{1}{2}\langle N^{(2n-1,\alpha,t)}\rangle_s} e^{ Q^{(2n-1,\alpha)}_{s,t}}\right]+ A_2(s,t).
\end{equation}
with $|A_2(s,t)|\le \| Q^{(2n-1,\alpha)}_{t,t}- Q^{(2n-1,\alpha)}_{s,t}\|_{\infty}$.
We are going to show that 
\begin{equation}\label{firstconv}
  \lim_{t\to\infty} A_1(s,t):=\lim_{t\to \infty}\ln  \E\left[F e^{N^{(2n-1,\alpha,t)}_s-\frac{1}{2}\langle N^{(2n-1,\alpha,t)}\rangle_s} e^{ Q^{(2n-1,\alpha)}_{s,t}}\right]=:A_1(s)
  \end{equation}
  exists for every $s$ and that  
  \begin{equation}\label{broom}
\lim_{s\to \infty}\sup_{t\ge s}  \| Q^{(2n-1,\alpha)}_{t,t}- Q^{(2n-1,\alpha)}_{s,t}\|_{\infty}=0.
\end{equation}
This is sufficient to conclude because repeating the argument in \eqref{wopop} we have
\begin{equation}\label{brooom}
 |A_1(s_1,t)-A_1(s_2,t)|\le \| Q^{(2n-1,\alpha)}_{s_1,t}- Q^{(2n-1,\alpha)}_{s_2,t}\|_{\infty}.  
\end{equation}
As a consequence of \eqref{brooom} the function $A_1(s)$ is Cauchy. From \eqref{okokok} and elementary calculus we deduce that
\begin{equation}
 \lim_{t\to \infty}\log (\E[f(X)e^{\alpha M_t}])-\sum_{i=1}^{2n-1}\frac{\alpha^i}{i!} \mathcal{C}_i(M_t)=
 \lim_{s\to \infty} A_1(s).
\end{equation} 
The convergence \eqref{firstconv} follows from the fact that for fixed $s$, the martingale $N^{(2n-1,\alpha,t)}_s$ can be written as a sum $N^{(2n-1,\alpha,t)}_s=\sum_{i=1}^{2n-1}\alpha^iM^{(i,t)}_s $ (see \eqref{defn}). Proposition \ref{prop:cumul} Item (3) then ensures then convergence as $t\to \infty$ in the essential supremum norm of
$N^{(2n-1,\alpha,t)}_s$,  $\langle N^{(2n-1,\alpha,t)}\rangle_s$ and also that of $Q^{(2n-1,\alpha)}_{s,t}$ which is a linear combination of martingale brackets.

Finally we prove \eqref{broom}. From  \eqref{qnalphabis}, it is enough to prove the claim for $\langle M^{(l,t)},M^{(k-l,t)} \rangle_s$ with $k\in \lint 2n,4n-2\rint$ and $l\in \lint k-2n+1,2n-1 \rint$. Similarly to \eqref{formz}, such a bracket takes the form of a sum
 \begin{equation}\label{formzbis}
\langle M^{(l,t)},M^{(k-l,t)} \rangle_s=\sum_{p=0}^{\lfloor k/2 \rfloor} \int^s_0 \int_{I^k} F^{(k,p)}(\bx,u,t)e^{\frac{\gb^2}{2}\sum_{l=1}^kK_u(x_l,x_l)}
 \cos\left[ \gb\left(\sum_{j=1}^{k} \lambda^{(p)}_j X_u(x_j)  \right) \right]
 \mu(\dd \bx) \dd u.
\end{equation}
Then Proposition \ref{prop:cumul} item 2 provides the bound
\begin{align*}
\|\langle M^{(l,t)},M^{(k-l,t)} \rangle_t-\langle M^{(l,t)},M^{(k-l,t)} \rangle_s   \|_\infty \leq C \int_s^\infty e^{\frac{\beta^2}{2}k u-(k-1)u}\,\dd u .
\end{align*}
This is enough for our claim because $k\geq 2n$ and $\beta^2 <2-\frac{1}{n}$.
\qed

%%%%%%%%%%%%%%%%%%%%%%%%%%%%%%%%%%%%%%%%%%%%%%%%%%%%%%%%%%%%%%%%%%%%%%%%%%%%%%
\section{Convergence of the charge distribution: Proof of Theorem \ref{th:mass}}\label{sec:charge}
%%%%%%%%%%%%%%%%%%%%%%%%%%%%%%%%%%%%%%%%%%%%%%%%%%%%%%%%%%%%%%%%%%%%%%%%%%%%%

To implement this proof, let us move back to \eqref{scroot}.
We need to prove the convergence and continuity in $\theta$ of 
$\bbE[e^{\alpha M^{(\beta,\theta)}_t}]/\bbE[e^{\alpha M^{(\beta)}_t}]$ with $M^{(\beta,\theta)}_t$ defined in \eqref{mbetateta}.
 We are going to proceed in two steps. First we prove that  
\begin{equation}\label{kobe}
\lim_{t\to \infty}\frac{\bbE[e^{\alpha M^{(\beta,\theta)}_t}]}{\bbE[e^{\alpha M^{(\beta)}_t}]}e^{\sum_{i=1}^{n-1}\frac{\alpha^{2i}}{(2i)!} \big(\cC^{(\beta)}_{2i}(t)-\cC^{(\theta,\beta)}_{2i}(t)\big)} 
\end{equation}
exists and is continuous in $\theta$ and then that this is also the case of  
\begin{equation}\label{lebron}
\lim_{t \to \infty}\sum_{i=1}^{n-1}\frac{\alpha^{2i}}{(2i)!} \big(\cC^{(\beta)}_{2i}(t)-\cC^{(\theta,\beta)}_{2i}(t)\big).
\end{equation}
Let us start with \eqref{kobe}. Note that adding an angle $\theta$ in all the $\cos$ does not alter the computation made in Section
\ref{sec:proofmain} and 
\ref{sec:partition}, and everything that has been proved concerning the martingale  $M^{(\beta)}_t$ in the previous section remains true for
$M^{(\beta,\theta)}_t$. In the following section,   we keep this convention of adding $\theta$  as a superscript  in the notation of a quantity  previously defined for $M^{(\beta)}_t$ and now considered for $M^{(\beta,\theta)}_t$.

 In particular if $M^{(i,t,\theta)}$ is the martingale recursively defined by \eqref{dafai}  starting with   $M^{(\beta,\theta)}_t$ then we have like in \eqref{daform}
\begin{equation}\label{larmoire}
 \langle M^{(i-j,t,\theta)},M^{(j,t,\theta)}\rangle_t \! =\!\sum_{p=0}^{\lfloor i/2 \rfloor} \int^t_0 \!\! \int_{I^i} \!\! F^{(i,j,p)}(\bx,u,t)e^{\sum\limits_{k=1}^i\frac{\gb^2}{2}  K_u(x_k,x_k)}  \!\!\!\!\!\cos\Big( \gb Y_p(\mathbf{x},u) \! + \! \sum^p_{k=1}\gl^{(p)}_k\theta(x_k) \Big) \mu(\dd \bx) \, \dd u,
\end{equation}
where the functions $F^{(i,j,p)}(\bx,u,t)$ \emph{do not} depend on $\theta$ and the $\gl^{(p)}_k$'s defined in \eqref{recoil}.
In particular this implies that the limit
\begin{equation}
\cZ(\alpha,\beta,\theta):=\bbE[e^{\alpha M^{(\beta,\theta)}_t}]e^{-\sum_{i=1}^{n-1}\frac{\alpha^{2i}}{(2i)!} \cC^{(\theta,\beta)}_{2i}(t)}
\end{equation} 
exists for the same reason as in the case  $\theta=0$. Checking the continuity in $
\theta$ requires to be cautious.
As a consequence of \eqref{larmoire}, if $Q^{(2n-1,\alpha,\theta)}_{s,t}$ is defined as in \eqref{qnalphabis} then the proof of \eqref{broom} shows that the bound does not depend on $\theta$ and we have 
\begin{equation}
\lim_{s\to \infty}\sup_{\theta\in C(I)}\sup_{t\ge s}  \| Q^{(2n-1,\alpha,\theta)}_{t,t}- Q^{(2n-1,\alpha,\theta)}_{s,t}\|_{\infty}=0
\end{equation}
where the supremum is taken over the set $C(I)$ of continuous bounded functions on $I$.
Hence (recall \eqref{wopop}) to have continuity of $\cZ(\alpha,\beta,\theta)$ it is sufficient to prove that for any $s$ the limit
\begin{equation}
 \lim_{t\to \infty}\E\left[ e^{N^{(2n-1,\alpha,\theta, t)}_s-\frac{1}{2}\langle N^{(2n-1,\alpha,\theta,t)}\rangle_s+ Q^{(2n-1,\alpha,\theta)}_{s,t}}\right]
 \end{equation}
 is continuous in $\theta$. Using only the fact that the $\cos$ function is Lipschitz and following the computations in the  proof of Item (3) of Proposition \ref{prop:cumul} (starting with \eqref{defmk2}) we obtain that ($|\cdot|_{\infty}$ denotes the usual supremum norm)
 \begin{equation}\begin{split}
  \| N^{(2n-1,\alpha,\theta, t)}_s-N^{(2n-1,\alpha,\theta', t)}_s\|_{\infty}&\le C_s |\theta-\theta'|_{\infty},\\
  \| \langle N^{(2n-1,\alpha,\theta,t)}\rangle_s-\langle N^{(2n-1,\alpha,\theta',t)}\rangle_s\|_{\infty} &\le C_s |\theta-\theta'|^2_{\infty},\\
  \|Q^{(2n-1,\alpha,\theta)}_{s,t}- Q^{(2n-1,\alpha,\theta')}_{s,t} \|_{\infty} &\le C_s |\theta-\theta'|^2_{\infty},
  \end{split}
 \end{equation} 
 which yields in particular the desired continuity statement for the limit.
 Now let us look at \eqref{lebron}. In order to prove existence of the limit and continuity at the same time let us look at 
 $\cC^{(\theta,\beta)}_{2i}(t)- \cC^{(\theta',\beta)}_{2i}(t)$.
 We observe first that
 \begin{equation}
 \bbE\left[
 \cos\left[ \beta\left(Y_p(\bx)+\sum_{k=1}^{i} \lambda^{(p)}_k \theta(x_k)\right)\right]\right]=  \cos\Big(\beta \sum_{k=1}^{i} \lambda^{(p)}_k \theta(x_k) \Big)e^{-\frac{\gb^2}{2}{\Var(Y_p(\bf x))}}.
 \end{equation}
Using \eqref{larmoire} and \eqref{formz},  we obtain that for $A^{(i,\theta)}_t$ being defined as \eqref{dafai}
\begin{multline}\label{ladaptun}
 i! (\cC^{(\theta,\beta)}_{i}(t) - \cC^{(\theta',\beta)}_{i}(t))= \bbE[A^{(i,\theta)}_t-
 A^{(i,\theta')}_t]\\
 \!\!\!\!\!\!\!
  \!\!\!\!\!\!\!
   \!\!\!\!\!\!\!
 =\sum_{p=0}^{\lfloor i/2 \rfloor} \int^t_0 \int_{I^i} F^{(i,p)}(\bx,u,t)e^{\frac{\gb^2}{2}\sum_{k=1}^iK_u(x_k,x_k)} \\ \times \left[\cos\Big(\beta \sum_{k=1}^{i} \lambda^{(p)}_k \theta(x_k) \Big)-\cos\Big(\beta \sum_{k=1}^{i} \lambda^{(p)}_k \theta'(x_k) \Big)\right]e^{-\frac{\gb^2}{2}{\Var(Y_p(\bf x))}}
 \mu(\dd \bx) \dd u.
\end{multline}

As we have convergence of $F^{(i,p)}(\bx,u,t)$ towards a limit, 
we want to use dominated convergence  for each terms of the sum in $p$.
Let us observe that 
\begin{equation}
\left|\cos\Big(\beta \sum_{k=1}^{i} \lambda^{(p)}_k \theta(x_k) \Big)-\cos\Big(\beta \sum_{k=1}^{i} \lambda^{(p)}_k \theta'(x_k)\Big)\right| \le \beta i|\theta-\theta'|_\infty.
\end{equation}
Hence from the computation of the Section \ref{sec:proofmain} when $p\ne i/2$, we can deduce that we have convergence towards a function which is Lipschitz continuous for the $|\theta|_\infty$ norm.
For the case $p=i/2$, using 
$$\cos(b)-\cos(a)=2\sin\big(\frac{a+b}{2}\big)\sin\big(\frac{a-b}{2}\big)\le \frac{1}{2}|a+b|\times |a-b|$$ 
we have
\begin{equation}
 \left|\cos\Big(\beta \sum_{k=1}^{i} \lambda^{(p)}_k \theta(x_k) \Big)-\cos\Big(\beta \sum_{k=1}^{i} \lambda^{(p)}_k \theta'(x_k)\Big)\right|
 \le  \frac{1}{2} \beta^2 \left|\sum_{k=1}^{i} \lambda^{(p)}_k (\theta(x_k)-\theta'(x_k))\right|
 \left|\sum_{k=1}^{i} \lambda^{(p)}_k(\theta(x_k)+\theta'(x_k))\right|.
\end{equation}
Recalling the definition of $m(\bf x)$
\eqref{wasse} and that of the H\"older norm, we observe that, for the optimal permutation $\sigma$ and for any $1/2$-H\"older continuous function $\eta$, we have
\begin{equation}
 \left|\sum_{k=1}^{p}\eta(x_k)-\eta(x_{p+\sigma(k)})\right|
 \le p \sum_{k=1}^{p}|\eta(x_k)-\eta(x_{p+\sigma(k)})|\le p \|\eta\|_{1/2} \sqrt{m(\bf x)},
\end{equation}
which applied to $\eta= \theta \pm \theta'$ yields 
\begin{equation}
 \left|\cos\Big(\beta \sum_{k=1}^{i} \lambda^{(p)}_k \theta(x_k) \Big)-\cos\Big(\beta \sum_{k=1}^{i} \lambda^{(p)}_k \theta'(x_k)\Big)\right|\le \frac{\beta^2}{2}
 \|\theta+\theta'\|_{1/2} \|\theta-\theta'\|_{1/2} m(\bf x).
\end{equation}
To conclude using dominated convergence, we only need to check that  (recall $F^{(i,p)}(\bx,u):= \sup_{t>u}F^{(i,p)}(\bx,u,t)$)
\begin{equation}
 \int^\infty_0 \int_{I^i} F^{(i,p)}(\bx,u)e^{-\frac{\gb^2}{2}U_{i,p}(\bx,u)} m(\bx) 
 \mu(\dd \bx) \dd u<\infty.
\end{equation}
Combining \eqref{secondestbis} (for $s=0$) and  \eqref{propz} 
the quantity we have to estimate is smaller than 
\begin{multline}
  \int^\infty_0 \int_{I^i} F^{(i,p)}(\bx,u)e^{-\frac{\gb^2}{2}U_{i,p}(\bx,u)} m(\bx)^{\frac{\beta^2}{2}}\mu(\dd \bx)  \dd u \\
 \le C \int^\infty_0 e^{\frac{\beta^{2}}{2}(i-1)u}  \int_{I^i} F^{(i,p)}(\bx,u)  \mu(\dd \bx)  \dd u
  \le  C' \int^\infty_0 e^{ \left(\frac{\beta^2}{2}-1\right) u}\dd u<\infty.
\end{multline}
Finally, the convergence in the Schwartz space of tempered distributions is then a consequence of Levy-Schwartz Theorem \cite[Th. 2.3]{bierme}.
\qed

%%%%%%%%%%%%%%%%%%%%%%%%%%%%%%%%%%%%%%%%%%%%%%%%%%%%%%%%%%%%%%%%%%%%%%%%%%%%%%
\section{Correlation functions :  Proof of  Theorem \ref{th:lecorrels},}\label{sec:correl}
%%%%%%%%%%%%%%%%%%%%%%%%%%%%%%%%%%%%%%%%%%%%%%%%%%%%%%%%%%%%%%%%%%%%%%%%%%%%%

 Let us recall that in this section,  $m$ the number of extra charges arbitrarily placed in the system. Then, using the Sine-Gordon representation for \eqref{zhatcorrel}, we have 
\begin{equation}\label{thegordon}
  \hat{\mathcal{Z}}_{\alpha/2,\beta,t}({\bf z},\boldsymbol{\eta})=e^{ -\beta^2\sum_{l<l'}\eta_l\eta_{l'}K_t(z_l,z_{l'})}\E\left[  e^{\alpha \hat M^{({\bf z},\eta)}_t}\right],
\end{equation}
where (considering the analytic continuation of $\cos$ on $\bbC$)
\begin{equation}\label{hatm}
 \hat M^{({\bf z},\eta)}_t:= \int_I   \cos\left(\beta X_t(x)+ \bi 
 \beta^2 \sum_{l=1}^m \eta_l K_t(z_l,x)\right) e^{\frac{\beta^2}{2} K_t(x,x)}\dd \mu(x).
\end{equation}
Switching from \eqref{zhatcorrel} to \eqref{hatm} (or   \eqref{espcorrel}) can be done by observing that both expressions are analytic in $\boldsymbol{\eta}$ and coincide for $\boldsymbol{\eta}\in (i\R)^m$ by applying the Girsanov transform.

Then omitting the dependence in $({\bf z}, \eta)$ in the martingale notation, the quantity defined in \eqref{hatm} is the terminal value of the martingale $(\hat M^{(t)}_{s})_{s\in[0,t]}$ defined by

\begin{equation}
\hat M^{(t)}_s:= \int_I e^{\frac{\gb^2}{2}K_s(x,x)}\cos \left(\gb X_s(x)+ \bi\sum_{k=1}^m\gb^2 \eta_k K_t(x,z_k) \right) \dd x.
\end{equation}
For notational simplicity we set for the rest of this section ($(z_k,\eta_k)$ being fixed)  
$$ \psi_t(x):= \gb^2\sum_{k=1}^m \eta_k K_t(x,z_k) 
,$$
and $ \psi=\lim_{t\to \infty}\psi_t$. 
The cumulants of $\hat M^{(t)}_s$ can be obtained simply by repeating 
the computation made for those of $M^{(\beta)}_t$.
If $\hat \cC_i(t)$ denotes the $i$-th cumulant of $\hat M^{(t)}_s$ , starting with \eqref{formz} and repeating the computation  \eqref{ladaptun}, we obtain
\begin{equation}\label{ladapt2}
 i! (\hat \cC_{i}(t)-\cC_{i}(t)) 
 \\=\sum_{p=0}^{\lfloor i/2 \rfloor} \int^t_0 \int_{I^i} F^{(i,p)}(\bx,u,t)e^{-\frac{\beta^2}{2}U_{i,p}(\bx,u)}
 \left[\cosh\left( \sum_{k=1}^i\gl^{(p)}_k \psi_t(x_k)\right)-1\right]
 \mu(\dd \bx) \dd u.
\end{equation}
Theorem \ref{th:lecorrels} is proved if we can show that the two following claims hold:
\begin{enumerate}
\item For every $i$, $i! (\hat \cC_{i}(t)-\cC_{i}(t))$ converges when $t$ tend to infinity. 
\item  There exists $n\ge 0$ such that
\begin{equation}\label{alprev}
   \lim_{t \to \infty} \bbE \left[ e^{\alpha \hat M^{(t)}_t}\right] e^{-\sum_{i=1}^n \frac{\alpha^i}{i! }  \hat \cC_{i}(t)}<\infty.
\end{equation}
\end{enumerate}
Since  the proof of \eqref{alprev} is very similar to what is done in Section \ref{sec:partition}, we only provide details for the first point which is the convergence of  cumulant difference. 
 As the term
$ \left[\cosh\left( \sum_{k=1}^i\gl^{(p)}_k \psi_t(x_k)\right)-1\right]$ is not bounded uniformly in $t$, we require an extension of Proposition \ref{prop:cumul}.
%(the statement is also uniform in $\beta$.
Let us introduce the quantity $\Upsilon$ on $I$ and its extension to $I^i$  as
%(the dependence in ${\bf z}$ is ommited in the notation)
$$ \Upsilon(x)= \max_{k\in \lint 0, n\rint}(-\log |x-z_k| ) \quad  \text{ and }  \quad \Upsilon(\bx):=\sum_{k=1}^i W(x_k).$$ 
Note that it is easy to check that 
\begin{equation}\label{laproox}
\cosh\left( \sum_{k=1}^i\gl^{(p)}_k \psi_t(x_k)\right)\le C e^{\beta^2 \|\eta\|_{\infty} \Upsilon(\bx)}.
\end{equation}
Hence we need an adapted version of Proposition \ref{prop:cumul}, which allows us for the integration of the bound above, in order to proceed along the same proof as in Section \ref{sec:proofmain}. 

\begin{proposition}\label{prop:cumulbis}

Given  for every $i\ge 2$, $p\le i/2$, then
whenever $2\alpha+\beta^2<2$ 
there exists a constant $C=C(i,{\bf z}, \alpha,\beta)>0$ such that for any fixed $k$  
 we have  
\begin{equation}\label{propz2}
\sup_{x_k\in I} \int_{I^{i-1}} e^{\alpha (\Upsilon(\bx )-\Upsilon(x_k))}\sup_{t\geq u} |F^{(i,p)}(\bx,u,t)| \mu(\dd \bx^{(k)}) \le C e^{(i-1)(\alpha-1)u}.
\end{equation}

\end{proposition}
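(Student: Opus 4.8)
The plan is to prove Proposition~\ref{prop:cumulbis} by the very induction on $i$ that underlies Proposition~\ref{prop:cumul}, carrying the extra weight $e^{\alpha(\Upsilon(\bx)-\Upsilon(x_k))}=e^{\alpha\sum_{j\neq k}\Upsilon(x_j)}$ through each step of the computation of Section~\ref{sub:proof}. The one genuinely new ingredient is a weighted version of the decay estimates for $Q_s$ used there. Since $2\alpha+\beta^2<2$ and $\beta^2\geq 0$ force $\alpha<1$, one has the pointwise bound $e^{\alpha\Upsilon(x)}\leq\sum_{l=0}^{n}|x-z_l|^{-\alpha}$, which is locally integrable on $I$, and one checks that
\[
\sup_{y\in I}\int_I e^{\alpha\Upsilon(x)}\,|Q_s(x,y)|\,\mu(\dd x)\leq C e^{(\alpha-1)s},\qquad
\sup_{y\in I}\int_I e^{\alpha\Upsilon(x)}\,(1+e^s|x-y|)^{-2}\,\mu(\dd x)\leq C e^{(\alpha-1)s}.
\]
Both follow by bounding $e^{\alpha\Upsilon}$ by the sum over $l$ of $|x-z_l|^{-\alpha}$ and splitting, for each $l$, the region $\{|x-z_l|\leq e^{-s}\}$ — where $\int|x-z_l|^{-\alpha}\dd x\sim e^{(\alpha-1)s}$ since $\alpha<1$ and the kernel is bounded — from its complement, where $|x-z_l|^{-\alpha}\leq e^{\alpha s}$ and the decay of the kernel at scale $e^{-s}$ supplies the missing factor $e^{-s}$ (this is where $d=1$ enters, via $\int_{|t|\geq r}|t|^{-2}\dd t=2/r$); summing the finitely many $l$'s gives the two displays.

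For the base case $i=2$ one has $F^{(2,p)}(x_1,x_2,u,t)=\pm\tfrac{\beta^2}{4}Q_u(x_1,x_2)$, so \eqref{propz2} is exactly the first display above with $s=u$. For the inductive step I would reproduce the proof of \eqref{propz} essentially verbatim. The bracket of the two martingales in \eqref{saltype} produces, after the trigonometric expansion, a function $H$ of the form \eqref{defH}, and the task is to bound $\int_{I^{i+j-1}}e^{\alpha\sum_{M\neq N}\Upsilon(x_M)}\,|H(\mathbf{x}_1,\mathbf{x}_2,s,t)|\,\mu(\dd\bx^{(N)})$. Assuming by symmetry that $N\geq i+1$, integrate successively (all quantities being non-negative, Tonelli applies): over $\bx_1^{(k)}$ using the induction hypothesis \eqref{propz2} for $F$, which carries no weight on $x_k$ and gives a factor $Ce^{(i-1)(\alpha-1)u}$ uniformly in $x_k$; then over $x_k$, with its weight $e^{\alpha\Upsilon(x_k)}$, against $|Q_s(x_k,x_l)|$ via the weighted estimate above, producing $Ce^{(\alpha-1)s}$ uniformly in $x_l$; and finally over $\bx_2^{(N)}$ using \eqref{propz2} for $G$, producing $Ce^{(j-1)(\alpha-1)v}$. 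Bounding $e^{-\frac{\beta^2}{2}U_{i,p}(\bx_1,s,u)}\leq Ce^{\frac{\beta^2}{2}(i-1)(u-s)}$ by Lemma~\ref{simplifbis} (and similarly for $\bx_2$), it remains to integrate in $u$ and $v$: the integral $\int_s^\infty e^{(i-1)(\alpha-1)u}\,e^{\frac{\beta^2}{2}(i-1)(u-s)}\,\dd u$ converges precisely when $\alpha-1+\tfrac{\beta^2}{2}<0$, that is when $2\alpha+\beta^2<2$, and equals a constant times $e^{(i-1)(\alpha-1)s}$. Multiplying the three pieces gives $Ce^{((i-1)+1+(j-1))(\alpha-1)s}=Ce^{(i+j-1)(\alpha-1)s}$, which is \eqref{propz2} once $i+j$ and $s$ are relabelled as $i$ and $u$.

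The remaining sub-cases are handled exactly as in Section~\ref{sub:proof}: when $i=2p$ and/or $j=2q$ one invokes the sharper bound \eqref{secondestbis}, rewrites the charge-coupling kernel via \eqref{resum}--\eqref{resum2} as a difference of $Q_s$'s over pairs of points at distance $\leq m(\bx_1)$ (resp. $m(\bx_2)$), uses the derivative bounds \eqref{laderive} to gain a factor $m(\bx_1)e^s$ (resp. $m(\bx_1)m(\bx_2)e^{2s}$) when these distances are $\leq e^{-s}$, and absorbs the gain with the $m(\bx_1)^{\beta^2/2}$ (resp. $m(\bx_2)^{\beta^2/2}$) coming from Onsager since $\beta^2\leq 2$; the only substitution is to replace $\int_I(1+e^s|x_k-x_l|)^{-2}\mu(\dd x_k)\leq Ce^{-s}$ by its weighted counterpart above. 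The case $j=1$ is identical to its unweighted analogue. The main, and essentially only, obstacle is the bookkeeping of the trade-off at each order between the factor $e^{\alpha u}$ produced by the weight and the contraction $e^{-u}$ coming from the induction hypothesis: the content of the proposition is exactly that this leaves the $u$-integral convergent up to the threshold $2\alpha+\beta^2<2$, and that the weight stays integrable against every $Q_s$-kernel because $\alpha<d=1$. Since all domination constants are uniform in $t$, this weighted bound, together with the already-known convergence \eqref{limit} of $F^{(i,p)}(\bx,u,t)$ and the estimate \eqref{laproox}, lets one run the dominated-convergence argument for \eqref{ladapt2} (and, as in Section~\ref{sec:partition}, for \eqref{alprev}) along the same lines as in the earlier sections.
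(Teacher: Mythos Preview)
Your proof is correct and follows essentially the same approach as the paper: induction on $i$ along the lines of Proposition~\ref{prop:cumul}, with the single new ingredient being the weighted kernel estimate $\int_I e^{\alpha\Upsilon(x)}(1+e^s|x-y|)^{-2}\mu(\dd x)\le Ce^{(\alpha-1)s}$ (the paper's \eqref{integre}, modulo a sign typo there), after which the three integrations in \eqref{bigthree} are replaced by their weighted analogues and the special cases $p=i/2$, $q=j/2$, $j=1$ are deferred to the same adaptations as before. Your justification of the weighted estimate via the near/far splitting around each $z_l$ is slightly more explicit than the paper's, but the structure and all key steps coincide.
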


We postpone the proof of Proposition \ref{prop:cumulbis} and wish to use to prove that 
whenever $\beta^2(2 \|\eta\|_{\infty}+1)<2$ we have for all $i$
(recall $\bar F^{(i,p)}(\bx,u)=\lim_{t\to \infty}F^{(i,p)}(\bx,u,t)$)
\begin{equation}\label{ladapttr}
 \lim_{t\to \infty }i! (\hat \cC_{i}(t)-\cC_{i}(t)) 
 \\=\sum_{p=0}^{\lfloor i/2 \rfloor} \int^\infty_0 \int_{I^i} \bar F^{(i,p)}(\bx,u)e^{-\frac{\beta^2}{2} U_{i,p}(\bx,u)}
 \left[\cosh\left( \sum_{k=1}^i\gl^{(p)}_k \psi(x)\right)-1\right]
 \mu(\dd \bx) \dd u<\infty.
\end{equation}
Both the convergence and finiteness of the integrals are proved using dominated convergence starting with \eqref{ladapt2}. 
We proceed separately for each $i$ and $p$ and start with the easier case $i\ne 2p$. Setting $\alpha=\beta^2 \|\eta\|_{\infty}$ and  as usual $F^{(i,p)}(\bx,u):= \sup_{t\geq u} |F^{(i,p)}(\bx,u,t)|$
and using \eqref{goodzbis} and \eqref{laproox}, we obtain that the integrand in \eqref{ladapt2} is bounded  uniformly in $t$ by 
$$C F^{(i,p)}(\bx,u)e^{\frac{\beta^2}{2}(i-1)u}e^{\alpha W(\bx)}.
$$
According to \eqref{propz2} the integral of the above is (up to a constant factor) smaller than
\begin{equation}
 \left( \int_I e^{\alpha W(x_1)} \mu (\dd x_1)\right)\left(  \int_{0}^{\infty} 
 e^{(i-1)[\alpha+(\gb^2/2)-1]u} \dd u\right) <\infty,
\end{equation}
where the finiteness of the above is implied by $(2\alpha+\beta^2)<2$.
When $i=2p$ we need to be substantially more accurate in our approximation.
Instead of \eqref{laproox} we observe (e.g. considering separately the case of 
$|\sum_{k=1}^i\gl^{(p)}_k \psi_t(x_k)|\le 1$ and $|\sum_{k=1}^i\gl^{(p)}_k \psi_t(x_k)|\ge 1$) that 
\begin{equation}\label{laproox2}
\left[\cosh\left( \sum_{k=1}^i\gl^{(p)}_k \psi_t(x)\right)-1\right]\le C e^{\alpha \Upsilon(\bx)}\min\left[1,  \left(\sum_{k=1}^i\gl^{(p)}_k \psi_t(x_k)\right)\right]^2.
\end{equation}
To obtain a bound which is uniform in $t$ we use the fact that there exists $C$ such that for every $t\ge 0$
\begin{equation}
\sum_{k=1}^i\gl^{(p)}_k \psi_t(x_k) \le C \frac{m(\bx)}{D(\bx)},
\end{equation}
where $m(\bx)$ is the Wasserstein distance defined in \eqref{wasse} and $D(\bx)=\mintwo{i\in \lint 1,k \rint}{j\in \lint 1, n \rint} |x_i-z_j|.$
We can also replace the exponent $2$ in \eqref{laproox2} by $\beta^2/2$ (which is smaller than $1$ in any case) and ignore the $\min$.
Using \eqref{secondestbis}, the bound we obtain for the integrand is then
$$  F^{(i,p)}(\bx,u)e^{\frac{\beta^2}{2}(i-1)u}
e^{\alpha \Upsilon(\bx )}  \left(\min( m(\bx)^{-1}, e^{u} ) \left(\frac{m(\bx)}{D(\bx)}\right)\right)^{\beta^2/2}.$$
Now observe that the last term (between parentheses) is smaller than
\begin{equation}
\frac{1}{D(\bx)^{\beta^2/2}}\le  \sum_{k=1}^i e^{\frac{\beta^2}{2}\Upsilon(x_k)}.
\end{equation}
To conclude we notice that for each $k\in\lint 1,i\rint$ we have from \eqref{propz2}
\begin{multline}
 \int^{\infty}_0\int_{I^i}  F^{(i,p)}(\bx,u)e^{\frac{\beta^2}{2}(i-1)u}
e^{\alpha \Upsilon(\bx )+\frac{\beta^2}{2}\Upsilon(x_k)} \mu (\dd \bx ) \dd u\\
\le  C\left(\int_{I}  e^{\left(\alpha+\frac{\beta^2}{2}\right) \Upsilon(x_k)} \mu( \dd x_k)\right)\left( \int^{\infty}_0 e^{(i-1)[\alpha+\beta^2/2-1]u} \dd u\right),
\end{multline}
where the finiteness of the first and second integrals are both  consequences of our assumption $2\alpha +\beta^2<2$.

\begin{proof}[Proof of Proposition \ref{prop:cumulbis}]
 The proof goes by induction and follows the steps of the proof of Proposition \ref{prop:cumul}.
 The key observation is that  there exists a constant
 $C_1$
 (depending on $\alpha$ and ${\bf z}$)  such that 
 for every $t\ge u$ and $x\in I$
 \begin{equation}\label{integre}
  \int_{I} (1+e^u|x-y|)^2 e^{\alpha \Upsilon(y)} \mu (\dd y)
  \le C_1 e^{-(\alpha-1)u}.
 \end{equation}
%Setting  $W_u(\bx):= \sum_{k=1}^i W_u(x_k)$,
We prove \eqref{propz2} by induction.
Note that \eqref{integre} and Assumption \eqref{ass:WN} allows us to check that the statement is valid when $i=2$.
Then the  proof of the induction statement follows the steps of Proposition \ref{prop:cumul} item (2) until \eqref{bigthree} where we have to figure out how to replace the three inequalities.
We assume  that $p\ne i/2$ and $q\ge j/2$ for simplicity (the adaptation needed for the case $2p=i$ and/or $2q=j$ are exactly identical to that in the proof of Proposition \ref{prop:cumul}, and the same observation is valid for the case $j=1$).
Instead of \eqref{defH0} we need to prove 
\begin{multline}\label{defH00}
 \int_{I^{i+j-1}} \!\!\!\!\!\!\!\! e^{\alpha \Upsilon(x_k)}|Q_s(x_k,x_l)|\left[\int^\infty_s  F(\mathbf{x}_1,u)e^{\alpha \Upsilon(\bx^{(k)}_1)+\frac{\gb^2}{2}(i-1)(u-s)}  \dd u\right]\\ \times 
 \left[\int^\infty_s G(\mathbf{x}_2,v)  e^{ \alpha \Upsilon(\bx^{(k)}_2)+ \frac{\gb^2}{2}(i-1)(v-s)}\dd v \right] \mu(\dd \bx^{(k)}) \le C e^{(i+j-1)(\alpha-1)s }
\end{multline}
where $\bx^{(k)}_1$ stands for the vector $\bx_1$ with $k$-th component removed (in particular it does not change if $\bx_1$ has no $k$-th component) and similarly for $\bx^{(k)}_2$.
Now considering  $u,v>s$, we observe (using the induction hypothesis for the first and last line and \eqref{integre} for the middle one)

\begin{equation}
\begin{split}\label{bigthree2}
\int_{I^{i-1}} F(\mathbf{x}_1,u)e^{\alpha \Upsilon(\bx^{(k)}_1)} \mu(\dd \bx^{(k)}_1)&\le C_2 e^{(i-1)(\alpha-1)u},\\
   \int_{I} |Q_s(x_k,x_l)| e^{\alpha \Upsilon(x_k)} \mu(\dd x_k) &\le C_3 e^{(\alpha-1)s},\\
  \int_{I^{j-1}} G(\mathbf{x}_2,v)e ^{ \alpha \Upsilon(\bx^{(m)}_2)} \mu(\dd\bx^{(m)}_2) &\le C_4  e^{(j-1)(\alpha-1)v}.
  \end{split}
 \end{equation}
 To prove \eqref{defH00}, we just need to combine these three inequalities in the l.h.s.\ and integrate over $u,v>s$ (using that $\alpha+\frac{\gb^2}{2}<1$).
 
 \medskip
 
 \noindent Now to prove \eqref{alprev}, following the proof structure of Section \ref{sec:partition}, it is sufficient to prove that
 \begin{equation}\label{broom2}
\lim_{s\to \infty}\sup_{t\ge s}  \| \hat Q^{(n,\alpha)}_{t,t}- Q^{(n,\alpha)}_{s,t}\|_{\infty}=0.
\end{equation}
 where
 \begin{equation}\label{qnalphabis2}
 \hat Q^{(n,\alpha)}_{s,t}:= \frac{1}{2}\sum_{k=n+1}^{2n} \alpha^{k} 
   \sum_{l=k-n}^{n}\langle \hat M^{(l,t)},\hat M^{(k-l,t)} \rangle_s .
 \end{equation}
 and $\hat M^{(i,t)}$ are the martingales constructed from $\hat M$ using the iteration \ref{dafai}. These type of martingale bracket can be written in the form
 
 \begin{multline}\label{formzbis2}
\langle\hat  M^{(l,t)}, \hat M^{(k-l,t)} \rangle_s\\=\sum_{p=0}^{\lfloor k/2 \rfloor} \int^s_0 \int_{I^k} F^{(k,p)}(\bx,u,t)e^{\frac{\gb^2}{2}\sum_{l=1}^kK_u(x_l,x_l)}
 \cos\left[ \left(\sum_{j=1}^{k} \gb \lambda^{(p)}_j  X_u(x_j)+ {\bf i} \psi_t(x_j)  \right) \right]
 \mu(\dd \bx) \dd u,
\end{multline}
where $F^{(k,p)}$ verifies the properties of Propositions \ref{prop:cumul}-\ref{prop:cumulbis}.
In order to prove the convergence statement, we mostly need to prove domination. Hence we only need to observe that the cosinus term is bounded in absolute value by $C e^{\beta^2 \|\eta\|_{\infty} \Upsilon(\bx)}$, and we conclude by showing that  

\begin{equation}
 \int^\infty_0 \int_{I^k} \sup_{t\ge u} |F^{(k,p)}(\bx,u,t)|e^{\frac{\gb^2}{2}\sum_{l=1}^kK_u(x_l,x_l)}  e^{\beta^2 \|\eta\|_{\infty} \Upsilon(\bx)}
  \dd u <\infty
\end{equation}
Setting $\alpha=\|\eta_{\infty}\|\beta^2$, as a consequence of Proposition \ref{prop:cumulbis}, the above integral is smaller than
\begin{equation}
 C \int^\infty_0 e^{\frac{\gb^2}{2} ku}  e^{(k-1)(\alpha-1)u} 
  \dd u,   
\end{equation}
and the above integral is finite as soon as $(k-1)(\alpha-1)u+\frac{\gb^2}{2} ku<0$ which is valid for $n$ sufficiently large (recall that we consider only $k\ge n+1$)
as soon as $\beta^2( 1 +2 \|\eta\|_{\infty})<2$.
\end{proof}
%%%%%%%%%%%%%%%%%%%%%%%%%%%%%%%%%%%
 %%%%%%%%%%%%%%%%%%%%%%%%%%%%%%%%%%%%%%%%%%%%%%%%%%%%%%%%%%%%%%%%%%%%%%%%%%%%
  \appendix

\section{Explicit expression for the first two martingale brackets}
We compute here the first two brackets explicitly; we make no assumptions on the dimension $d$ and we suppose that $\beta^2>d$. We argue that up to the second threshold $\beta_2^2= \frac{3d}{2}$, the bracket $\langle M^{(1,t)}, M^{(2,t)}  \rangle_t$ is not bounded hence rendering the normalization in this case already quite involved. 
\subsubsection{Computation of $A^{(2)}$}

Recall that we have
\begin{equation}
M_t= -\gb \int_{I}  \Big (  \int_0^t   \sin (\gb X_s(x)) e^{\frac{\gb^2  }{2}K_s(x,x)}\dd X_s(x) \Big ) \mu(\dd x) - \mu (I).
\end{equation}
By transforming the product of  sines as follows 
$$2 \sin(\gb X_1)\sin(\gb X_2)= \cos \left(\gb (X_1-X_2)\right)- \cos \left(\gb( X_1+X_2)\right),$$
The definition in \eqref{defa1} and Equation \eqref{correlsnh} yield
\begin{multline}\label{computer}
 A^{(2)}_t= \frac{\gb^2}{4} \int^t_0  \int_{I^2} Q_u(x_1,x_2) e^{\frac{\gb^2}{2}(K_u(x_1,x_1)+K_u(x_2,x_2))} \\
 \times \left[ \cos \left(\gb (X_u(x_1)-X_u(x_2))\right)- \cos \left(\gb( X_u(x_1)+X_u(x_2))\right) \right]\mu(\dd x_1)\mu( \dd x_2)\dd u.
\end{multline}
\subsubsection{Computation of $M^{(2)}$}
Now, we proceed with the computation of $M^{(2)}$. Recall that for $u >s$ we have
\begin{equation*}
\E[   \cos(\beta (X_u(x)-X_u(y))) e^{\frac{\beta^2}{2} (K_u(x,x) + K_u(y,y))}  | \mathcal{F}_s  ]= \cos(\beta (X_s(x)-X_s(y))) e^{\beta^2 \int_s^u Q_v(x,y) dv } e^{\frac{\beta^2}{2} (K_s(x,x) + K_s(y,y))}
\end{equation*}
and
\begin{equation*}
\E[   \cos(\beta (X_u(x)+X_u(y))) e^{\frac{\beta^2}{2} (K_u(x,x) + K_u(y,y))}  | \mathcal{F}_s  ]= \cos(\beta (X_s(x)+X_s(y))) e^{-\beta^2 \int_s^u Q_v(x,y) dv } e^{\frac{\beta^2}{2} (K_s(x,x) + K_s(y,y))}
\end{equation*}
and therefore we get
\begin{align*}
\frac{1}{2}\E[  \langle M \rangle_t | \mathcal{F}_s  ] & = \frac{\beta^2}{4} \int_{\mathcal{O}^2}  \cos(\beta (X_s(x)-X_s(y))) e^{\frac{\beta^2}{2} (K_s(x,x) + K_s(y,y))}  \left  ( \int_s^t Q_u(x,y) e^{\beta^2 \int_s^u Q_v(x,y) dv }  du \right )   dx dy   \\
 & -\frac{\beta^2}{4}  \int_{\mathcal{O}^2}  \cos(\beta (X_s(x)+X_s(y))) e^{\frac{\beta^2}{2} (K_s(x,x) + K_s(y,y))}  \left  ( \int_s^t Q_u(x,y) e^{-\beta^2 \int_s^u Q_v(x,y) dv }  du \right )   dx dy  \\
 & +L_s \\
& = \frac{1}{4} \int_{\mathcal{O}^2}  \cos(\beta (X_s(x)-X_s(y))) e^{\frac{\beta^2}{2} (K_s(x,x) + K_s(y,y))}   ( e^{\beta^2 \int_s^t Q_v(x,y) dv } -1)   dx dy   \\
 & +\frac{1}{4}  \int_{\mathcal{O}^2}  \cos(\beta (X_s(x)+X_s(y))) e^{\frac{\beta^2}{2} (K_s(x,x) + K_s(y,y))}    (  e^{-\beta^2 \int_s^t Q_v(x,y) dv }-1  )   dx dy  \\
 & +L_s \\
\end{align*}
where
\begin{align*}
L_s= & = \frac{\beta^2}{4} \int_0^s \int_{\mathcal{O}^2}  Q_u(x,y) \cos(\beta (X_u(x)-X_u(y)))   e^{\frac{\beta^2}{2} (K_u(x,x) + K_u(y,y))} du  dx dy   \\
 & -\frac{\beta^2}{4} \int_0^s \int_{\mathcal{O}^2}  Q_u(x,y) \cos(\beta (X_u(x)+X_u(y)))   e^{\frac{\beta^2}{2} (K_u(x,x) + K_u(y,y))} du  dx dy  \\
\end{align*}
We have 
\begin{equation*}
M^{(2,t)}_s= \frac{1}{2}\E[  \langle M \rangle_t | \mathcal{F}_s  ]-\frac{1}{2}\E[  \langle M \rangle_t ] 
\end{equation*}
Using  Itô's formula, we have
\begin{align*}
& d \cos(\beta (X_s(x)-X_s(y))) e^{\frac{\beta^2}{2} (K_s(x,x) + K_s(y,y))}  \left  ( \int_s^t e^{\beta^2 \int_s^u Q_v(x,y) dv }  du \right )     \\
& = -\beta \sin(\beta (X_s(x)-X_s(y))) e^{\frac{\beta^2}{2} (K_s(x,x) + K_s(y,y))}  \left  ( \int_s^t e^{\beta^2 \int_s^u Q_v(x,y) dv }  du \right ) (dX_s(x)-dX_s(y))  + \cdots
\end{align*}
where $\cdots$ is a finite variation term. Hence we get that
\begin{align*}
& d \langle  M^{(1,t)}, M^{(2,t)}  \rangle_s   \\
& = \frac{\beta^4}{4} \int_{\mathcal{O}^3} \sin(\beta (X_s(x_1)))  \sin(\beta (X_s(x_2)-X_s(x_3)))  e^{\frac{\beta^2}{2} (K_s(x_1,x_1)+K_s(x_2,x_2) + K_s(x_3,x_3))}  \\
& \times \left  ( \int_s^t Q_u(x_2,x_3) e^{\beta^2 \int_s^u Q_v(x_2,x_3) dv } du \right ) (Q_s(x_1,x_2)-Q_s(x_1,x_3))   dx_1 dx_2 dx_3  \\
& + \frac{\beta^4}{4} \int_{\mathcal{O}^3} \sin(\beta (X_s(x_1)))  \sin(\beta (X_s(x_2)+X_s(x_3)))  e^{\frac{\beta^2}{2} (K_s(x_1,x_1)+K_s(x_2,x_2) + K_s(x_3,x_3)))} \\
& \times \left  ( \int_s^t Q_u(x_2,x_3) e^{-\beta^2 \int_s^u Q_v(x_2,x_3) dv } du \right )  (Q_s(x_1,x_2)+Q_s(x_1,x_3))  dx_1 dx_2dx_3  \\
& = \frac{\beta^2}{4} \int_{\mathcal{O}^3} \sin(\beta (X_s(x_1)))  \sin(\beta (X_s(x_2)-X_s(x_3)))  e^{\frac{\beta^2}{2} (K_s(x_1,x_1)+K_s(x_2,x_2) + K_s(x_3,x_3))}  \\
& \times ( e^{\beta^2 \int_s^t Q_v(x_2,x_3) dv }-1 ) (Q_s(x_1,x_2)-Q_s(x_1,x_3))   dx_1 dx_2 dx_3  \\
& - \frac{\beta^2}{4} \int_{\mathcal{O}^3} \sin(\beta (X_s(x_1)))  \sin(\beta (X_s(x_2)+X_s(x_3)))  e^{\frac{\beta^2}{2} (K_s(x_1,x_1)+K_s(x_2,x_2) + K_s(x_3,x_3)))} \\
& \times   (  e^{-\beta^2 \int_s^t Q_v(x_2,x_3) dv }  -1 )  (Q_s(x_1,x_2)+Q_s(x_1,x_3))  dx_1 dx_2dx_3  \\
\end{align*}

If one tries to bound the second term in the above sum, one gets by using $|\sin| \leq 1$
\begin{align*}
&  \frac{\beta^2}{4}   |  \int_{\mathcal{O}^3} \sin(\beta (X_s(x_1)))  \sin(\beta (X_s(x_2)+X_s(x_3)))  e^{\frac{\beta^2}{2} (K_s(x_1,x_1)+K_s(x_2,x_2) + K_s(x_3,x_3)))} \\
& \times   (  e^{-\beta^2 \int_s^t Q_v(x_2,x_3) dv }  -1 )  (Q_s(x_1,x_2)+Q_s(x_1,x_3))  dx_1 dx_2dx_3   |  \\
& \leq  \frac{\beta^2}{4}   \int_{\mathcal{O}^3}   e^{\frac{\beta^2}{2} (K_s(x_1,x_1)+K_s(x_2,x_2) + K_s(x_3,x_3)))} \\
& \times  (Q_s(x_1,x_2)+Q_s(x_1,x_3))  dx_1 dx_2dx_3   \\
& \leq  C e^{\frac{3 \beta^2}{2} s}   \int_{\mathcal{O}^3}   (Q_s(x_1,x_2)+Q_s(x_1,x_3))  dx_1 dx_2dx_3    \\
& \leq C e^{(\frac{3 \beta^2}{2}-d) s} 
\end{align*}
Hence, by integration over $s$ the term $\langle  M^{(1,t)}, M^{(2,t)}  \rangle_t  $ can not be bounded since $\beta^2>d$. This shows that as soon as $\beta^2>d$, one must go to order three in the renormalisation scheme; since 
\begin{align*}
 \langle \alpha M^{(1,t)} + \alpha^2 M^{(2,t)}+\alpha^3  M^{(3,t)} \rangle_t  &=  \alpha^2 \langle M^{(1,t)} \rangle_t +2 \alpha^3 \langle M^{(1,t)}, M^{(2,t)} \rangle_t+ 2 \alpha^4  \langle M^{(1,t)}, M^{(3,t)}\rangle _t  \\
 & +\alpha^4  \langle M^{(2,t)} \rangle_t+ 2\alpha^5  \langle M^{(2,t)}, M^{(3,t)} \rangle_t+ \alpha^6  \langle M^{(3,t)} \rangle_t
\end{align*}
 the cumulant decomposition yields
\begin{align*}
\E[e^{\alpha M_t}]= e^{\alpha |\mathcal{O}|  + \frac{\alpha^2}{2}\E[  \langle M \rangle_t ] +\alpha^3 \E[ \langle M^{(1)}, M^{(2)} \rangle_t   ] } & \E[ e^{ \alpha M^{(1,t)}_t+\alpha^2 M^{(2,t)}_t +\alpha^3  M^{(3,t)}_t - \frac{1}{2}  \langle \alpha M^{(1,t)} + \alpha^2 M^{(2,t)}+ \alpha^3 M^{(3,t)} \rangle_t   } \\
& \times  e^{   \frac{\alpha^4}{2} \langle M^{(2,t)} \rangle_t  +\alpha^4 \langle M^{(1,t)},M^{(3,t)}  \rangle_t  + \alpha^5  \langle M^{(2,t)}, M^{(3,t)} \rangle_t+ \frac{\alpha^6}{2}  \langle M^{(3,t)} \rangle_t  }   ]   \\
\end{align*}
and one must bound the terms $\langle M^{(2,t)} \rangle_t, \langle M^{(1,t)},M^{(3,t)}  \rangle_t , \langle M^{(2,t)}, M^{(3,t)}\rangle_t,  \langle M^{(3,t)} \rangle_t $. Explicit computations of these cases are already rather involved at this point and justify our less explicit approach.

 {\small 
}


\begin{thebibliography}{20}

\bibitem{froh76}
J. Fr\"ohlich: \emph{Classical and quantum statistical mechanics in one and two dimensions: two component Yukawa and Coulomb systems}, Commun. Math. Phys. 47 (1976), p.233-268.

\bibitem{BGN82}
G. Benfatto, G. Gallavotti, F. Nicol\`o: \emph{On the massive Sine-Gordon equation in the first few regions of collapse}, Commun. Math. Phys. 83 no 3 (1982), p. 387-410.

\bibitem{bierme}
H. Bierm\'e, O. Durieu, Y. Wang: \emph{Generalized random fields and L\'evy's continuity theorem on the space of tempered distributions}, \href{https://arxiv.org/abs/1706.09326}{arXiv:1706.09326}.

\bibitem{BK}
D. Brydges, T. Kennedy: \emph{Mayer expansions and the Hamilton-Jacobi equations}, J. Stat. Phys. 48 issue 1-2 (1987), p. 19-49.

\bibitem{CH}
A. Chandra, M. Hairer, H. Shen: \emph{ The dynamical sine-Gordon model in the full subcritical regime},
{ 	
arXiv:1808.02594}.

\bibitem{DH}
J. Dimock ,  T.R. Hurd: \emph{Sine-Gordon revisited}, Ann. Henri Poincar\'e 1, no. 3, (2000), 499Ð541.   

\bibitem{falco}
P. Falco: \emph{Kosterlitz-Thouless Transition Line for the Two Dimensional Coulomb Gas}, Commun. Math. Phys. (2012), vol 312, p. 559-609.

\bibitem{FZ}
V. Fateev, S. Lukyanov, A. Zamolodchikov, A. Zamolodchikov: \emph{Expectation values of boundary fields in the boundary sine-Gordon model}, Physics Letters B 406, 83-88, 1997.

\bibitem{S1}
J. Fendley, F. Lesage, H. Saleur: \emph{Solving 1d plasmas and 2d boundary problems using Jack polynomials and functional relations},  Journal of Statistical Physics, 1995, Volume 79, Issue 5-6, pp 799-819.

\bibitem{FS}
J. Fendley,  H. Saleur: \emph{Exact Perturbative Solution of the Kondo Problem}, Physical Review Letters 75(24):4492-4495 (1996),  \href{https://arxiv.org/pdf/cond-mat/9506104.pdf}{arXiv:9506104}.

\bibitem{FGR} P.K. Friz, J. Gatheral and R. Radoičić:
 \emph{Cumulants and Martingales},
 \href{https://arxiv.org/abs/2002.01448}{arXiv:2002.01448}
  
\bibitem{frs}
J. Fr\"ohlich, T. Spencer: \emph{The Kosterlitz-Thouless transition in two-dimensional Abelian spin systems and Coulomb gas}, Commun. Math. Phys. 81, p.527-
602, 1981.

\bibitem{HS}
M. Hairer, H. Shen: \emph{The dynamical Sine-Gordon model}, Commun. Math. Phys. (2016), vol 341, p. 341-933.

\bibitem{JKN}
J. Jacod, E. Kowalski, A. Nikeghbali:
\emph{Mod-Gaussian convergence: new limit theorems in Probability and Number Theory},
 Forum Mathematicum  23 (4), p. 835-873 (2011).

\bibitem{FMN} V. Feray, P-L. Meliot, A. Nikeghbali:
\emph{Mod-$\phi$ Convergence
Normality Zones and Precise Deviations}, Springer Briefs in Probability and Mathematical Statistics (2016).

\bibitem{LSZ}
T. Lebl\'e, S. Serfaty, and O. Zeitouni:  \emph{Large deviations for the two- dimensional two-component plasma}, Comm. Math. Phys., 350(1):301–360, 2017. MR3606477.
 
\bibitem{Nic}
F. Nicol\`o: \emph{On  the  massive  sine-Gordon equation in  the  higher regions of collapse}, Comm. Math. Phys. 88, no. 4, (1983), p.581-600.

\bibitem{NRS}
F. Nicol\`o, J. Renn, A. Steinmann: \emph{On the massive sine-Gordon equation in all regions of collapse}, Comm. Math. Phys. 105, no. 2, (1986), 291-326.


\bibitem{cf:Ons} L. Onsager: \emph{Electrostatic interaction between molecules}, J. Phys. Chem. 43 (1939), 189-196.

\bibitem{cf:JSW} J. Junnila, E. Saksman and Christian Webb:
\emph{Imaginary  multiplicative  chaos:  moments,  regularity  and connections  to  the  Ising  model},  \href{https://arxiv.org/abs/1806.02118}{arXiv:1806.02118}.
 
\bibitem{MRM} Rhodes R., Vargas, V.: \emph{Multidimensional multifractal random measures}, \emph{Electronic Journal of Probability}, \textbf{15} (2010) 241-258.

\bibitem{cf:Jeanpierre} J.P.\ Solovej: \emph{Some simple counterexamples to the Onsager lemma on metric spaces}, Rep. Math.Phys. 28 (1989), no. 3, 335–338.

\end{thebibliography}
\end{document}